\theoremstyle{remark}
\newtheorem*{remark}{\bf Remark}
\theoremstyle{plain}
\newtheorem{proposition}{\bf Proposition}[section]
\newtheorem{definition}[proposition]{\bf Definition}
\newtheorem{theorem}[proposition]{\bf Theorem}
\newtheorem{theoremintro}{\bf Theorem}
\newtheorem{lemma}[proposition]{\bf Lemma}
\newtheorem{corollary}[proposition]{\bf Corollary}
\def\C{{\mathbb C}}
\def\R{{\mathbb R}}
\def\Z{{\mathbb Z}}
\def\B{\mathbb{B}}
\def\D{\mathbb{D}}
\def\J{\mathcal{J}}
\def\K{\mathcal{K}}
\def\p{\mathbb{P}}
\def\supp{\textup{supp}}
\def\pe{\textup{ := }}
\def\rat{\textup{Rat}}
\def\Per{\textup{Per}}
\def\bif{\textup{bif}}
\def\Mand{\textbf{\textup{M}}}
\def\J{\mathcal{J}}
\def\and{{\quad\text{and}\quad}}
\title{Higher bifurcation currents, neutral cycles and the Mandelbrot set}
\author{Thomas Gauthier}
\begin{document}

\maketitle

\begin{abstract}
We prove that given any $\theta_1,\ldots,\theta_{2d-2}\in \R\setminus\Z$, the support of the bifurcation measure of the moduli space of degree $d$ rational maps coincides with the closure of classes of maps having $2d-2$ neutral cycles of respective multipliers $e^{2i\pi\theta_1},\ldots,e^{2i\pi\theta_{2d-2}}$. To this end, we generalize a famous result of McMullen, proving that homeomorphic copies of $(\partial \Mand)^{k}$ are dense in the support of the $k^{th}$-bifurcation current $T^k_\bif$ in general families of rational maps, where $\Mand$ is the Mandelbrot set. As a consequence, we also get sharp dimension estimates for the supports of the bifurcation currents in any family.
\end{abstract}

\section{Introduction.}

Given $d\geq 2$, the \emph{bifurcation locus} of any holomorphic family $(f_\lambda)_{\lambda\in\Lambda}$ of degree $d$ rational maps (or of the moduli space $\mathcal{M}_d$ of degree $d$ rational maps) is the closure of the set of discontinuity of the map $\lambda\mapsto \J_\lambda$, where $\J_\lambda$ is the Julia set of $f_\lambda$. DeMarco \cite{DeMarco1} has shown that the bifurcation locus of $\Lambda$ is the support of a closed positive $(1,1)$-current $T_\bif$ which is called the \emph{bifurcation current} of the family $(f_\lambda)_{\lambda\in\Lambda}$. When $(f_\lambda)_{\lambda\in\Lambda}$ comes with $2d-2$ marked critical points $c_1,\ldots,c_{2d-2}$, the current $T_\bif$ coincides with $\sum_iT_i$, where $T_i$ is the bifurcation current of the critical point $c_i$ (see \cite{DeMarco2}). Bassanelli and Berteloot \cite{BB1} initiated the study of the self-intersections $T_\bif^k$, $1\leq k\leq \min(2d-2,\dim\Lambda)$, of the bifurcation current. These currents give a natural stratification of the bifurcation locus by loci of stronger bifurcations and are well-adapted to the study of the complex geometric properties of the bifurcation locus. We refer the reader to the survey \cite{dsurvey} or the lecture notes \cite{bsurvey} for a report on recent results involving bifurcation currents and further references. We also refer to section 2 for precise definitions.

~ 

\par Several different descriptions of the currents $T_\bif^k$ have been provided by various authors. Let us mention some known results. The set $\Per_n(w)$ of parameters $\lambda\in\Lambda$ for which $f_\lambda$ has a cycle of multiplier $w\in\C$ and exact period $n$ is a complex hypersurface of $\Lambda$. Bassanelli and Berteloot \cite{BB3} proved that the $k^{th}$ bifurcation current $T_\bif^k$ is actually the limit of integration currents of the form
$$\frac{d^{-(s_1(n)+\cdots+s_k(n))}}{(2\pi)^m}\int_{[0,2\pi]^k}\bigwedge_{j=1}^k[\Per_{s_j(n)}(re^{i\theta_j})]d\theta_1\cdots\theta_k~,$$
for any $r>0$ and a suitable choice of increasing functions $s_j:\mathbb{N}\longrightarrow\mathbb{N}$. In the family of all degree $d$ polynomials, they give in \cite{BB2} a much stronger result when $k=1$: they prove that the hypersurfaces $d^{-n}[\Per_n(re^{i\theta})]$ converge to $T_\bif$ for \emph{fixed} $r\leq1$ and $\theta\in\R$. Regarding Bassanelli and Berteloot's work, one can expect the current $T_\bif^k$ to be the limit of currents of the form $d^{-(s_1(n)+\cdots+s_k(n))}[\Per_{s_1(n)}(re^{i\theta_1})]\wedge\cdots\wedge[\Per_{s_k(n)}(re^{i\theta_k})]$ for fixed $\theta_i\in\R$ and $r$. Recently, Favre and the author \cite{distriPCF} gave an affirmative answer to this problem in the case when $r<1$ and $k=d-1$ in the family of all degree $d$ polynomials, using a Theorem of Yuan \cite{yuan} concerning the equidistribution of small points.  This problem remains wide open when $r>1$.
\par In this paper we focus on a different question of topological nature, namely,  whether parameters possessing $k$ distinct
neutral cycles of given multipliers are dense in the support of $T_\bif^k$. In the whole paper, we consider connected holomorphic families of rational maps. Our first result can be formulated as follows.

\begin{theoremintro}
Let $T_\bif$ be the bifurcation current of the moduli space $\mathcal{M}_d$ of degree $d$ rational maps. For any $1\leq k\leq 2d-2$ and any $\Theta_k=(\theta_1,\ldots,\theta_k)\in(\R\setminus\Z)^k$,
\begin{center}
$\supp\left(T_\bif^k\right)=\overline{\mathcal{Z}_k(\Theta_k)}=\overline{\textup{Prerep}(k)}$,
\end{center}
where $\textup{Prerep}(k)\pe\{[f]\in\mathcal{M}_d \,;\,f\text{ has }k\text{ critical points preperiodic to repelling cycles}\}$ and $\mathcal{Z}_k(\Theta_k)\pe\{[f]\in\mathcal{M}_d \,;\,f\text{ has }k\text{ distinct cycles of resp. multipliers }e^{2i\pi\theta_1},\ldots,e^{2i\pi\theta_k}\}$.
\label{maintheorem}
\end{theoremintro}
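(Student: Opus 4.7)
The plan is to deduce the Main Theorem from the generalization of McMullen's theorem announced in the abstract, namely, that every point of $\supp(T_\bif^k)$ admits arbitrarily small neighborhoods containing a homeomorphic copy of $(\partial\Mand)^k$, obtained from $k$ independent polynomial-like renormalizations at $k$ distinct active critical points.

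For the inclusions $\overline{\mathcal{Z}_k(\Theta_k)}\subseteq\supp(T_\bif^k)$ and $\overline{\textup{Prerep}(k)}\subseteq\supp(T_\bif^k)$ (the easier direction), I would argue as follows. A parameter $[f]\in\mathcal{Z}_k(\Theta_k)$ carries $k$ distinct irrationally neutral cycles; by Fatou's theorem each one captures a critical orbit, and after marking critical points on a branched cover of a neighborhood of $[f]$ one obtains $k$ distinct marked critical points $c_{i_1},\ldots,c_{i_k}$ simultaneously active at $[f]$. The hypersurface $\Per_{n_j}(e^{2i\pi\theta_j})$ is smooth near $[f]$ (the multiplier of the cycle is a non-critical local coordinate, since $\theta_j$ is irrational), is locally contained in $\supp(T_{i_j})$, and the $k$ hypersurfaces are mutually transverse because they are controlled by $k$ independent critical dynamics. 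This forces $T_{i_1}\wedge\cdots\wedge T_{i_k}\neq 0$ at $[f]$, hence $[f]\in\supp(T_\bif^k)$. The argument for $\textup{Prerep}(k)$ is parallel, relying on DeMarco's description of $T_i$ and on the analogous transversality of the Misiurewicz-type hypersurfaces attached to distinct critical points.

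For the reverse inclusions $\supp(T_\bif^k)\subseteq\overline{\mathcal{Z}_k(\Theta_k)}$ and $\supp(T_\bif^k)\subseteq\overline{\textup{Prerep}(k)}$, fix $[f_0]\in\supp(T_\bif^k)$ and a neighborhood $U$. The generalized McMullen theorem supplies a homeomorphic embedding $\iota:(\partial\Mand)^k\hookrightarrow U$ with the property that a point $(c_1,\ldots,c_k)$ maps to a parameter whose $j$-th renormalization is hybrid equivalent to $z^2+c_j$; in particular, cycle multipliers and preperiodicity of the critical point of $z^2+c_j$ are transported to cycle multipliers and preperiodicity of the $j$-th critical point of the image. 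One then invokes two classical density statements on $\partial\Mand$: for every $\theta\in\R\setminus\Z$, parameters $c\in\partial\Mand$ such that $z^2+c$ has a cycle of multiplier $e^{2i\pi\theta}$ are dense in $\partial\Mand$ (hyperbolic components are dense in $\partial\Mand$, and every value $e^{2i\pi\theta}$ is attained on the boundary of each); and Misiurewicz parameters are dense in $\partial\Mand$. Taking $k$-fold products and pushing forward by $\iota$ yields dense subsets of $U\cap\supp(T_\bif^k)$ lying respectively in $\mathcal{Z}_k(\Theta_k)$ and in $\textup{Prerep}(k)$.

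The main obstacle is the generalized McMullen theorem itself. McMullen's original argument produces a single copy of $\partial\Mand$ by polynomial-like renormalization of a one-parameter family near a parabolic parameter; the $k$-fold version requires constructing, within $\supp(T_\bif^k)$, a $k$-dimensional slice on which $k$ distinct active critical points can be renormalized independently, with mutually disjoint renormalization periods and disjoint topological supports, and then showing that the resulting $k$-parameter family of polynomial-like restrictions straightens the whole product $(\partial\Mand)^k$ homeomorphically into a subset of $\supp(T_\bif^k)$. Achieving this is the technical heart of the argument; once it is available, the deduction of the Main Theorem from the two density statements on $\partial\Mand$ is essentially formal.
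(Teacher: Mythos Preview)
Your strategy for the inclusion $\supp(T_\bif^k)\subseteq\overline{\mathcal{Z}_k(\Theta_k)}$ is essentially the paper's: use the generalized McMullen embedding (Theorem~\ref{mainthmMcM}) together with the density in $\partial\Mand$ of parameters with a cycle of prescribed multiplier $e^{2i\pi\theta}$ (Corollary~\ref{density}), and transport multipliers through the hybrid conjugacy. The paper also treats $\supp(T_\bif^k)=\overline{\textup{Prerep}(k)}$ as known from Dujardin \cite{higher}, so your separate argument for it is not needed, though your Misiurewicz-density route would work.

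The genuine gap is in what you call the ``easier direction'', $\mathcal{Z}_k(\Theta_k)\subset\supp(T_\bif^k)$. None of the three assertions you make there is justified, and at least two are problematic. First, there is no reason the hypersurface $\Per_{n_j}(e^{2i\pi\theta_j})$ should be smooth at $[f]$; irrationality of $\theta_j$ lets you follow the periodic cycle holomorphically, but says nothing about whether the multiplier map is a submersion on parameter space. Second, the claim that the $k$ hypersurfaces are ``mutually transverse because they are controlled by $k$ independent critical dynamics'' is precisely the nontrivial content: a priori, distinct neutral cycles need not impose independent conditions on the parameter. Third, a given $\Per_{n_j}(e^{2i\pi\theta_j})$ is contained in the bifurcation locus, but not obviously in a single $\supp(T_{i_j})$ for a fixed critical index $i_j$; the critical point attracted by the neutral cycle may change along the hypersurface.

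The paper handles this direction quite differently. It first proves a codimension lemma (Lemma~\ref{lemmaneutral}): any irreducible component of $\Per^k_{N_k}(\Theta_k)$ has codimension exactly $k$ in $\rat_d^{cm}$. The proof is global and algebraic, using McMullen's theorem that a stable algebraic family is either trivial or postcritically finite (Lemma~\ref{lmalgebraic}), together with the Fatou--Shishikura inequality. With this codimension statement in hand, the paper does \emph{not} try to show $[f]\in\supp(T_\bif^k)$ directly; instead it runs an inductive perturbation: inside $\Per^{k-1}_{N_{k-1}}(\Theta_{k-1})$ the $k$-th neutral cycle is non-persistent, so by Ma\~n\'e--Sad--Sullivan and Montel one finds a nearby parameter where some critical point is preperiodic to a repelling cycle; repeating $k$ times yields a nearby parameter where $k$ critical points fall \emph{properly} (not necessarily transversely) onto repelling cycles, and then \cite[Theorem~6.2]{Article1} places that parameter in $\supp(T_\bif^k)$. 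Your sketch bypasses all of this with a transversality claim that is not available.
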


\par Let us mention that the equality $\supp(T_\bif^k)=\overline{\textup{Prerep}(k)}$ is known (see \cite{buffepstein,Article2,favredujardin} for the case when $k$ is maximal). Dujardin \cite[Corollary 5.3]{higher} proved it in the general case, using a transversality Theorem for laminar currents.

~

\par Let us now describe how we prove Theorem \ref{maintheorem}. The main point is to generalize McMullen's universality of the Mandelbrot set: McMullen \cite{McMullen3} proved that in any one-dimensional family of rational maps, the bifurcation locus contains quasiconformal copies of the Mandelbrot set $\Mand$. We prove here that under some mild assumptions, the loci of stronger bifurcations contain also copies of products of the Mandelbrot set with itself. Relying on \cite{McMullen3} and \cite{Article1}, we prove the following.

\begin{theoremintro}
Let $(f_\lambda)_{\lambda\in\D^m}$ be a holomorphic family of degree $d$ rational maps with simple marked critical points $c_1,\ldots,c_k$ with $k\leq m$. Assume that $c_1,\ldots,c_k$ are transversely preperiodic to repelling cycles of $f_0$. Then, for any $\epsilon>0$, there exists a continuous embedding $\Phi:\Mand^k\times\D^{m-k}\hookrightarrow\D^m$ and integers $n_1,\ldots,n_k\geq 1$ such that
\begin{enumerate}
\item for any $(\zeta_1,\ldots\zeta_k,t)\in\Mand^k\times\D^{m-k}$, if $\lambda=\Phi(\zeta_1,\ldots,\zeta_k,t)$, there exists $k$ disjoint compact sets $\K_1,\ldots,\K_k\subset\p^1$ such that $f_\lambda^{n_i}:\K_i\rightarrow\K_i$ is hybrid conjugate to $z^2+\zeta_i$.
\item the set $\Phi\big((\partial\Mand)^k\times\D^{m-k}\big)$ is contained in $\supp(T_1\wedge\cdots\wedge T_k)$ and $$\dim_H\Phi\big((\partial\Mand)^k\times\D^{m-k}\big)\geq 2m-\epsilon.$$
\end{enumerate}
\label{mainthmMcM}
\end{theoremintro}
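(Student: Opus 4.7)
The plan is to prove the theorem by induction on $k$, renormalizing one critical point at a time. The base case $k = 1$ is the higher-dimensional version of McMullen's universality theorem \cite{McMullen3} developed in \cite{Article1}: the transverse preperiodicity of $c_1$ to a repelling cycle of $f_0$ allows a Misiurewicz-type perturbation producing an integer $n_1 \geq 1$, a parameter $\lambda_1 \in \D^m$ arbitrarily close to $0$, and an open neighborhood $U_1 \ni \lambda_1$ on which $f_\lambda^{n_1}$ admits a holomorphic family of polynomial-like restrictions $f_\lambda^{n_1} : V'_\lambda \to V_\lambda$ with $c_1(\lambda)$ as unique critical point. Straightening yields a continuous map $\chi_1 : U_1 \to \C$ whose fibers foliate $U_1$ by $(m-1)$-dimensional complex submanifolds, and such that $\chi_1^{-1}(\Mand)$ is the quadratic-like Mandelbrot locus for $c_1$.

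For the inductive step, one chooses $\lambda_1$ so that $c_2, \ldots, c_k$ remain transversely preperiodic to repelling cycles at $\lambda_1$; this is achievable since these are open conditions compatible with prescribing the Misiurewicz relation on $c_1$. Restricting to a generic fiber $\chi_1^{-1}(\zeta_1)$ gives a family parametrized by a disk of dimension $m-1$ to which the inductive hypothesis applies for $c_2, \ldots, c_k$. Composing the $k$ successive straightenings produces the continuous embedding $\Phi : \Mand^k \times \D^{m-k} \hookrightarrow \D^m$. Item (1) then follows directly from the construction: each $f_\lambda^{n_i}$ restricts to a polynomial-like map on a compact set $\K_i$ whose straightening is $z^2 + \zeta_i$, and pairwise disjointness of the $\K_i$ is ensured by taking $n_i$ large and localizing each renormalization near its own critical orbit.

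Item (2) splits into two parts. The inclusion $\Phi\bigl((\partial\Mand)^k \times \D^{m-k}\bigr) \subset \supp(T_1 \wedge \cdots \wedge T_k)$ follows because the $i$-th straightening semi-conjugates the dynamics of $c_i$ under $f_\lambda$ with that of the critical point of $z^2 + \zeta_i$, so that $\Phi^* T_i$ is supported on $\Mand^{i-1} \times \partial\Mand \times \Mand^{k-i} \times \D^{m-k}$; the wedge then has the claimed support. The Hausdorff dimension estimate combines Shishikura's theorem $\dim_H(\partial\Mand) = 2$ with Marstrand's product inequality, giving $\dim_H\bigl((\partial\Mand)^k \times \D^{m-k}\bigr) \geq 2m$. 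Since the straightenings are $K$-quasiconformal with dilatation $K$ tending to $1$ as the renormalization periods $n_i$ tend to infinity, $\Phi$ can be made bi-H\"older with exponent arbitrarily close to $1$, so the $\epsilon$-loss in dimension is absorbed by choosing the $n_i$ large enough.

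The main obstacle I anticipate is executing the inductive step: ensuring that after renormalizing $c_1$, the critical points $c_2, \ldots, c_k$ admit mutually independent renormalizations along a fiber of $\chi_1$. This requires both that the transverse preperiodicity persists under restriction to a generic fiber and that the renormalization domains for distinct critical points stay disjoint---issues that can be handled by combining a density argument (parameters with critical points preperiodic to repelling cycles are dense in each activity locus) with a dimension count on the fibers. A secondary technicality is coordinating the dilatations of all $k$ straightenings so that the composite $\Phi$ has a uniformly near-$1$ H\"older exponent, which is what delivers the sharp bound $2m - \epsilon$ rather than a weaker one.
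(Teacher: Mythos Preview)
Your inductive scheme for constructing $\Phi$ is essentially the paper's: renormalize one critical point at a time via McMullen's universality theorem, obtaining for each $c_i$ a tube homeomorphic to $\Mand \times \D^{m-1}$, and then intersect these tubes. The paper organizes the intersection step a bit more explicitly---after building the tube $\Psi_{j+1}$ for $c_{j+1}$ it shrinks the earlier $\Phi_j$ via small quasiconformal copies of $\Mand$ so that the fiberwise intersection is a single point---but the architecture, and the dimension estimate via Shishikura's theorem together with near-$1$ bi-H\"older regularity of the straightening, are the same.

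The genuine gap is your argument for the inclusion $\Phi\bigl((\partial\Mand)^k \times \D^{m-k}\bigr) \subset \supp(T_1 \wedge \cdots \wedge T_k)$. You claim that $\Phi^* T_i$ is supported on the slice $\{\zeta_i \in \partial\Mand\}$ and that ``the wedge then has the claimed support''. This fails on two counts. First, $\Phi$ is holomorphic only on $(\mathring{\Mand})^k \times \D^{m-k}$, so the pullback $\Phi^* T_i$ is not defined as a current across $\partial\Mand$. Second, and more fundamentally, even granting that each $c_i$ is active exactly along the slice $\zeta_i \in \partial\Mand$, this only yields $\Phi\bigl((\partial\Mand)^k \times \D^{m-k}\bigr) \subset \bigcap_i \supp(T_i)$; in general $\supp(T_1 \wedge \cdots \wedge T_k)$ is strictly smaller than $\bigcap_i \supp(T_i)$, and it is precisely the reverse containment at these points that you must establish.

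The paper closes this gap by a different mechanism. One takes a dense sequence $\zeta_j \in \partial\Mand$ of Misiurewicz parameters; the hybrid conjugacy transports the preperiodicity of $0$ under $z^2 + \zeta_{j_i}$ to preperiodicity of $c_i(\lambda)$ to a repelling cycle of $f_\lambda$ at $\lambda = \Phi(\zeta_{\mathbf{j}}, x')$, and the holomorphic-graph structure of the tubes forces these preperiodicity hypersurfaces to intersect \emph{properly} there. One then invokes \cite[Theorem~6.2]{Article1}, which asserts that parameters where marked critical points fall properly onto repelling cycles lie in $\supp(T_1 \wedge \cdots \wedge T_k)$. This is the substantive content of item~(2) and cannot be replaced by a formal support-of-pullback computation.
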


 This generalization of McMullen's Theorem is done in section $3$. To prove Theorem \ref{mainthmMcM}, we use McMullen's universality for each critical point separately to produce $k$ ``tubes'' of Mandelbrot set homeomorphic to $\Mand\times\D^{m-1}$ and which are tranverse to each other. We then construct $\Phi$ as a map from $\Mand^k\times\D^{m-k}$ to the intersection of those tubes. Let us stress out that the dimension estimate uses Shishikura's famous result \cite{shishikura2} concerning the Hausdorff dimension of the Mandelbrot set and H\"older regularity properties of $\Phi$ (see Theorem \ref{propmuniv}). Using \cite[Theorem 6.2]{Article1}, we then prove that the copy of $(\partial \Mand)^k\times\D^{m-k}$ given by $\Phi$ actually lies in the support of $T_1\wedge\cdots\wedge T_k$ (see Proposition \ref{cormuniv}).
\par Let us also mention that Inou and Kiwi \cite{kiwiinou} and Inou \cite{inou} have already obtained strengthened versions of McMullen's unversality of the Mandelbrot set in a different setting and given an explicit condition for the related embedding to be not continuous. On the other hand, Buff and Henriksen \cite{buffhenriksen} proved that some parameter spaces contain quasiconformal copies of Julia sets.

~

\par In \cite{Article1}, the author obtained sharp dimension estimates for the strong bifurcation loci of the space $\rat_d$ of all degree $d$ rational maps. Using Theorem \ref{mainthmMcM}, we actually get sharp  estimates for the Hausdorff dimension of the strong bifurcation loci of a general family. This is the subject of our third result.

\begin{theoremintro}
Let $(f_\lambda)_{\lambda\in\Lambda}$ be a holomorphic family of degree $d$ rational maps. Assume that there exists $\lambda_0$ such that $f_{\lambda_0}$ has simple critical points and let $1\leq k\leq 2d-2$ be such that $T_\bif^k\neq0$. Then $\supp(T_\bif^k)\setminus\supp(T_\bif^{k+1})\neq\emptyset$ and for any open set $\Omega\subset\Lambda$ such that $\Omega\cap\supp(T_\bif^k)\setminus\supp(T_\bif^{k+1})\neq\emptyset$, we have
\begin{center}
$\dim_H\big(\Omega\cap\supp(T_\bif^k)\setminus\supp(T_\bif^{k+1})\big)=2\dim_\C\Lambda$.
\end{center}
\label{tmdim}
\end{theoremintro}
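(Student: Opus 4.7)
The plan is to combine Theorem \ref{mainthmMcM} with Dujardin's identification $\supp(T_\bif^{k+1})=\overline{\textup{Prerep}(k+1)}$ from \cite[Corollary 5.3]{higher}, treating separately the non-emptiness of $\supp(T_\bif^k)\setminus\supp(T_\bif^{k+1})$ and the Hausdorff dimension lower bound. The upper bound $\dim_H\leq 2\dim_\C\Lambda$ is automatic, and Shishikura's estimate $\dim_H\partial\Mand=2$ is already packaged inside Theorem \ref{mainthmMcM}(2).

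To establish non-emptiness when $T_\bif^{k+1}\neq 0$ (the case $T_\bif^{k+1}=0$ being trivial), I would produce $\lambda_1\in\Lambda$ satisfying (i) $f_{\lambda_1}$ has simple critical points, (ii) $k$ critical points $c_1,\ldots,c_k$ are transversely preperiodic to distinct repelling cycles of $f_{\lambda_1}$, and (iii) the remaining $2d-2-k$ critical points lie in basins of attracting cycles of $f_{\lambda_1}$. The simple-critical-points hypothesis lets me mark all $2d-2$ critical points holomorphically on a dense Zariski-open subset of $\Lambda$, while $T_\bif^k\neq 0$ combined with Dujardin's density gives $\textup{Prerep}(k)\neq\emptyset$; condition (iii) is then engineered by pushing the $2d-2-k$ ``free'' critical points into hyperbolic components along the codimension-$k$ stratum cut out by the $k$ transverse preperiodic relations. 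Because attracting basins are open and persist under perturbation, (iii) holds on a neighborhood $U$ of $\lambda_1$, so that no parameter in $U$ can have more than $k$ critical points preperiodic to repelling cycles; Dujardin's theorem then yields $U\cap\supp(T_\bif^{k+1})=\emptyset$, and hence $\lambda_1\in\supp(T_\bif^k)\setminus\supp(T_\bif^{k+1})$.

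For the dimension lower bound, fix an open $\Omega\subset\Lambda$ meeting $\supp(T_\bif^k)\setminus\supp(T_\bif^{k+1})$, pick $\lambda_2$ in that intersection, and shrink $\Omega$ so that $\Omega\cap\supp(T_\bif^{k+1})=\emptyset$. Using Dujardin's density of $\textup{Prerep}(k)$ in $\supp(T_\bif^k)$ together with genericity of transversality (via the implicit function theorem applied to the preperiodicity equations $f^{n_i}(c_i)=p_i$) and the simple-critical-points hypothesis, I select $\lambda_3\in\Omega$ admitting $k$ simply marked critical points transversely preperiodic to distinct repelling cycles. Identifying a neighborhood of $\lambda_3$ inside $\Omega$ with $\D^m$ where $m=\dim_\C\Lambda$, Theorem \ref{mainthmMcM} supplies a continuous embedding $\Phi:\Mand^k\times\D^{m-k}\hookrightarrow\Omega$ with $\Phi((\partial\Mand)^k\times\D^{m-k})\subset\supp(T_1\wedge\cdots\wedge T_k)\subset\supp(T_\bif^k)$ and Hausdorff dimension at least $2m-\epsilon$. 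Since $\Omega$ avoids $\supp(T_\bif^{k+1})$, this embedded Cantor set lies in $\Omega\cap(\supp(T_\bif^k)\setminus\supp(T_\bif^{k+1}))$, and letting $\epsilon\to 0$ yields $\dim_H\geq 2m=2\dim_\C\Lambda$.

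The main obstacle is step (iii) above, i.e.\ securing along a transverse preperiodic stratum a parameter at which the remaining $2d-2-k$ critical points all land in attracting basins. The simple-critical-points hypothesis is essential here, permitting independent holomorphic markings and implicit-function-theorem perturbations of the critical points one at a time; once (iii) is in place, the persistence of hyperbolic components and Dujardin's characterization convert it into the desired separation $\lambda_1\notin\supp(T_\bif^{k+1})$, and the rest of the argument flows through by direct application of Theorem \ref{mainthmMcM}.
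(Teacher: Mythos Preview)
Your dimension lower bound argument is essentially the paper's: pick a point in $\supp(T_\bif^k)\setminus\supp(T_\bif^{k+1})$, shrink the open set to avoid $\supp(T_\bif^{k+1})$, use Dujardin's density to find a transversely preperiodic parameter with simple critical points, and invoke Theorem~\ref{mainthmMcM}. The paper packages the last step as Corollary~\ref{cordimbif}, but the content is identical.

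For the non-emptiness of $\supp(T_\bif^k)\setminus\supp(T_\bif^{k+1})$ you have the right target---a parameter where $k$ critical points are (transversely) preperiodic to repelling cycles and the remaining $2d-2-k$ are passive---but you correctly flag step~(iii) as the obstacle and do not actually carry it out. ``Pushing the free critical points into hyperbolic components along the codimension-$k$ stratum'' is the whole difficulty: one must argue that, along this stratum, the remaining critical points can be made passive \emph{one at a time}, and that at each stage one retains enough activity to keep going. The paper makes this precise via Lemma~\ref{lminter} and a descent from the top. Concretely, it starts at the \emph{maximal} $m$ with $T_\bif^m\neq 0$ on a ball $\B$, finds (by Theorem~\ref{tmduj}) a parameter where $c_{j_1},\ldots,c_{j_m}$ are transversely preperiodic, uses \cite[Lemma~3.1]{Article1} to see that $c_{j_m}$ is active in the subfamily $X_{j_1}\cap\cdots\cap X_{j_{m-1}}$, and applies Montel to make $c_{j_m}$ \emph{periodic} (hence passive on a smaller ball $\B_1$). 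If $T_\bif^m$ is still nonzero on $\B_1$, some other $m$-tuple of critical points is active there; repeat with a new index $j_m'\neq j_m$. After finitely many steps, $2d-1-m$ critical points are passive on a ball $\B'$ with $T_\bif^{m-1}\neq 0$, so $\B'\cap\supp(T_\bif^{m-1})\subset\supp(T_\bif^{m-1})\setminus\supp(T_\bif^m)$. One then iterates Lemma~\ref{lminter} to descend from $m$ to $k$.

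Two points about your variant are worth noting. First, working directly at level $k$ rather than descending from the top would still require this iterated Montel argument along strata, and you would need to check at each step that the next critical point is genuinely active in the current subfamily (this is where \cite[Lemma~3.1]{Article1} enters). Second, aiming for ``attracting basins'' rather than ``periodic'' is harmless in principle, but Montel hands you periodicity directly; the paper simply makes $c_{j_m}$ periodic, which is superattracting and hence passive on an open ambient neighborhood. Once you supply this iterative mechanism, your outline and the paper's proof coincide.
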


Let us also remark that our results strongly rely on \cite[Theorem 6.2]{Article1} and that Theorems \ref{maintheorem} and \ref{tmdim} also rely on \cite[Theorem 0.1]{higher}. The main difference with the proof of Theorem 1.1 of \cite{Article1} is the transfer phenomenom which is performed. Instead of transferring directly ``big'' sets from the dynamical space to the parameter space, we transfer a complete ``simplified'' parameter space into our actual parameter space.

~

\par Section 4 is devoted to explaining how to apply results from the previous sections to the particular case of the space $\rat_d^{cm}$ of all critically marked degree $d$ rational maps in order to obtain Theorem \ref{maintheorem}. We also prove a similar result, using a simpler argument, in the case of the moduli space $\mathcal{P}_d^{cm}$ of critically marked degree $d$ polynomials.

\paragraph*{Aknowledgements}
The author would like to thank Fran\c{c}ois Berteloot, Xavier Buff, Arnaud Ch\'eritat, Romain Dujardin, Charles Favre and Carsten Petersen without whose precious advice and knowledge this paper would never have appeared. The author would also like to thank the IMS and Stony Brook University which he was visiting during the autumn 2012 and where he finished the elaboration of the present work and the referee for his useful remarks.

\section{Preliminaries.}
\par Let us begin with introducing some tools and recalling known results we will need.
\subsection{The hypersurfaces $\Per_n(w)$.}\label{sectionPern}
\par To understand the geometry of the bifurcation locus of a holomorphic family of rational maps, one can investigate the geometry of the set of rational maps having a cycle of given multiplier and period. The following result describes the set of such parameters (see \cite[Chapter 4]{Silverman}):

\begin{theorem}[Silverman]
Let $(f_\lambda)_{\lambda\in \Lambda}$ be a holomorphic family of degree $d$ rational maps. Then for any $n\in\mathbb{N}^*$ there exists a holomorphic function $p_n:\Lambda\times\C\longrightarrow\C$ such that :
\begin{enumerate}
	\item For any $w\in \C\setminus\{1\}$, $p_n(\lambda,w)=0$ if and only if $f_\lambda$ has a cycle of exact period $n$ and of multiplier $w$,
	\item $p_n(\lambda,1)=0$ if and only if $f_\lambda$ has a cycle of period $n$ and multiplier $1$ or $f_\lambda$ has a cycle of period $m$ and multiplier a $r$-th root of unity with $n=mr$,
	\item for any $\lambda\in X$, the function $p_n(\lambda,\cdot)$ is a polynomial of degree $N_d(n)\sim\frac{1}{n}d^n$.
\end{enumerate}
Moreover, if $\Lambda$ is a quasi-projective variety, the functions $p_n$ are polynomials in $(\lambda,w)$.\label{tmdefPern}
\end{theorem}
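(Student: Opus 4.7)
The plan is to build $p_n$ as the characteristic polynomial (in $w$) of the multiplier map acting on the cycles of exact period $n$, using the dynatomic polynomial to isolate those cycles from the larger set of periodic points of period dividing $n$.

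First I would introduce the \emph{dynatomic polynomial}. Working locally, write $F_n(\lambda,z) \pe f_\lambda^n(z)-z$, viewed as a holomorphic section of an appropriate line bundle on $\Lambda\times\p^1$; for each $\lambda$, this has $d^n+1$ zeros on $\p^1$ (counted with multiplicity) consisting of all periodic points of period dividing $n$. By M\"obius inversion, the product
$$\Phi_n^*(\lambda,z)\pe \prod_{m\mid n}\bigl(f_\lambda^m(z)-z\bigr)^{\mu(n/m)}$$
is in fact holomorphic (the quotient is a polynomial divisor), and for generic $\lambda$ its zeros are precisely the periodic points of exact period $n$, grouped into $N_d(n)=\frac{1}{n}\sum_{m\mid n}\mu(n/m)d^m$ cycles of size exactly $n$.

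Next I would construct $p_n$ via a symmetric-function / resultant argument. Form
$$\widetilde{p}_n(\lambda,w)\pe \prod_{z:\ \Phi_n^*(\lambda,z)=0}\bigl(w-(f_\lambda^n)'(z)\bigr),$$
where the product is taken with the multiplicities of $\Phi_n^*$. This is a symmetric function of the roots, hence extends to a holomorphic function of $(\lambda,w)$, polynomial in $w$. Since $(f_\lambda^n)'$ is constant along each cycle, for generic $\lambda$ each of the $N_d(n)$ cycles contributes exactly $n$ identical factors, so $\widetilde{p}_n=p_n^n$ for a uniquely determined monic polynomial $p_n(\lambda,\cdot)$ of degree $N_d(n)$ in $w$. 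The delicate point is upgrading this generic equality to a global one: since the locus where cycles merge is a proper analytic subset, one takes the $n$-th root on the open dense set of generic parameters and extends it holomorphically across the degeneracy locus by a Riemann-type removable singularity argument, exploiting that $\widetilde{p}_n$ is a global $n$-th power on a dense set.

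With $p_n$ constructed, properties (1)--(3) are read off directly. For $w\neq 1$, a value $w$ of the multiplier of $f_\lambda^n$ at a root $z$ of $\Phi_n^*(\lambda,\cdot)$ forces $z$ to lie on an honest cycle of exact period $n$: if instead $z$ had exact period $m<n$ with $n=mr$, then $(f_\lambda^n)'(z)=\bigl((f_\lambda^m)'(z)\bigr)^r$ would be an $r$-th root of unity, so the only value this can take is $w=1$. This gives (1), and the same computation identifies the $w=1$ locus in (2) (one also checks that fixed points of $f^m$ acquire additional multiplicity as roots of $\Phi_n^*$ precisely in the rational-multiplier case, contributing the extra factor at $w=1$). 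The degree bound in (3) is the cycle count above, with $N_d(n)\sim d^n/n$. Finally, when $\Lambda$ is quasi-projective, the family is given algebraically, so $\Phi_n^*$ and $\widetilde{p}_n$ are polynomials, and the $n$-th root of a polynomial that is a polynomial $n$-th power in $w$ is again polynomial, giving the last assertion.

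The main obstacle is the passage from $\widetilde{p}_n$ to $p_n$ across the locus where either cycles of period $n$ merge or cycles of some strictly smaller period $m\mid n$ collapse into $f^n$-fixed points with multiplier $1$; one must check that the naive $n$-th root extends holomorphically across this (codimension $\geq 1$) locus and that the multiplicity count at $w=1$ matches exactly the description in (2).
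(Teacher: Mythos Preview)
The paper does not prove this theorem at all; it is quoted as a background result with a reference to \cite[Chapter 4]{Silverman}, so there is no proof in the paper to compare your proposal against.

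Your sketch is essentially the standard construction found in Silverman's book and in Milnor's appendix on multiplier polynomials, and the outline is correct. One remark on the step you yourself flag as delicate: rather than forming $\widetilde{p}_n$ and then extracting a global $n$-th root across the degeneracy locus, the cleaner route in the references is to observe directly that the elementary symmetric functions of the \emph{cycle} multipliers (as opposed to the pointwise derivatives $(f_\lambda^n)'(z)$) are already well-defined and holomorphic in $\lambda$, because locally the cycles can be followed holomorphically and any monodromy permutes them, leaving symmetric functions invariant. This produces $p_n$ without ever passing through an $n$-th power, and sidesteps the removable-singularity argument. Your approach works too, but the extension of the $n$-th root across the merging locus deserves a line of justification (e.g.\ the coefficients of $p_n$ are bounded holomorphic functions on the complement of a thin analytic set, hence extend).
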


\par For $n\geq1$ and $w\in\C$ we set $\Per_n(w)\pe\{\lambda\in \Lambda \ | \ p_n(\lambda,w)=0\}$. We will say that a neutral periodic point of $f_{\lambda_0}$ is \emph{persistent} in $\Lambda$ if it can be perturbed as a neutral periodic point of $f_\lambda$ for any $\lambda$ in a neighborhood of $\lambda_0$ in $\Lambda$, i.e. that $\Per_n(e^{i\theta})=\Lambda$ for some $n,\theta$.

\subsection{Bifurcation current of a critical point.}\label{sectioncrit}
\par Let $(f_\lambda)_{\lambda\in\Lambda}$ be a holomorphic family of degree $d$ rational maps. We say that $c$ is a \emph{marked critical point} if $c:\Lambda\longrightarrow\p^1$ is a holomorphic map satisfying $f_\lambda'(c(\lambda))=0$ for every $\lambda\in \Lambda$. If $\deg(f_\lambda,c(\lambda))=2$ for any $\lambda\in \Lambda$, we will say the the marked critical point $c$ is \emph{simple}.

\begin{definition}
\par We say that a marked critical point $c$ is \emph{passive} at $\lambda_0$ in $\Lambda$ if $(f_\lambda^n(c(\lambda)))_{n\geq0}$ is a normal family in a neighborhood of $\lambda_0$. Otherwise we say that $c$ is \emph{active} at $\lambda_0$ in $\Lambda$.\label{actif}
\end{definition}

\par Let $\omega$ be the Fubini-Study form on $\p^1$ and denote by $c_n(\lambda)\pe f_\lambda^{\circ n}(c(\lambda))$. Dujardin and Favre prove in \cite[Section 3.1]{favredujardin} that the sequence $d^{-n}c_n^*\omega$ converges to a positive closed $(1,1)$-current $T_c$ with local continuous potential, which support coincides with the activity locus of the marked critical point $c$.

\begin{definition}
$T_c$ is called the \emph{bifurcation current} of the marked critical point $c$.
\end{definition}

\par As $T_c$ has local continuous potential, the self-intersections of $T_c$ are well-defined in the sense of Bedford and Taylor (see \cite{bedfordtaylor}). The bifurcation current of a critical point has self-intersection zero (see \cite[Proposition 6.9]{favredujardin} for polynomial families and \cite[Theorem 6.1]{Article1} for the general case).

\begin{lemma}[Dujardin-Favre, Gauthier]
Let $(f_\lambda)_{\lambda\in \Lambda}$ be a holomorphic family of degree $d$ rational maps with a marked critical point $c$, then $T_c\wedge T_c=0$.
\label{lmTc}
\end{lemma}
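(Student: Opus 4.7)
The plan is to exploit the fact that the target of the map $c_n\colon\Lambda\to\p^1$ is one-complex-dimensional, which forces $c_n^*\omega\wedge c_n^*\omega$ to vanish, and then to pass to the limit using the Bedford--Taylor continuity of Monge--Amp\`ere operators along uniformly convergent sequences of continuous psh potentials.

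First I would set up local potentials. Fix $\lambda_0\in\Lambda$ and a small polydisk $U\ni\lambda_0$ on which $c_n$ admits a homogeneous lift $(\sigma_n,\tau_n)\colon U\to\C^2\setminus\{0\}$; then the pullback of the Fubini--Study form reads
\[
c_n^*\omega \;=\; dd^c\, u_n \quad\text{on } U,\qquad u_n \pe \tfrac{1}{2}\log\bigl(|\sigma_n|^2+|\tau_n|^2\bigr),
\]
and $u_n$ is continuous plurisubharmonic on $U$. The key pointwise identity is that $\omega\wedge\omega$ is a $(2,2)$-form on $\p^1$, hence identically zero for dimension reasons; consequently
\[
c_n^*\omega\wedge c_n^*\omega \;=\; c_n^*(\omega\wedge\omega) \;=\; 0 \qquad \text{on } \Lambda,
\]
for every $n\geq 1$. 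Equivalently, $(dd^c u_n)^2=0$ on $U$.

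Next I would invoke the construction of $T_c$. By Dujardin--Favre, the forms $d^{-n}c_n^*\omega$ converge to $T_c$ and $T_c$ has local continuous potential. More precisely, the renormalized potentials $v_n\pe d^{-n}u_n$ converge \emph{uniformly} on compact subsets of $U$ to a continuous psh function $v$ satisfying $T_c=dd^c v$ on $U$; this uniform convergence is the usual telescoping Green-function estimate for the dynamics of $f_\lambda$ on $\p^1$, bounding $|v_{n+1}-v_n|$ by $Cd^{-n}$ uniformly in $\lambda\in U$.

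Finally I would apply the Bedford--Taylor continuity theorem: if $(v_n)$ is a sequence of continuous psh functions converging locally uniformly to a continuous psh function $v$, then $(dd^c v_n)^2\to(dd^c v)^2$ weakly. Combining this with the previous two steps gives on $U$
\[
T_c\wedge T_c \;=\; (dd^c v)^2 \;=\; \lim_{n\to\infty}(dd^c v_n)^2 \;=\; \lim_{n\to\infty} d^{-2n}\, c_n^*\omega\wedge c_n^*\omega \;=\; 0,
\]
and since $\lambda_0$ was arbitrary the global identity $T_c\wedge T_c=0$ follows. The only genuine subtlety, and the step that deserves the most care, is verifying the uniform convergence of the local potentials $v_n$ (needed to legitimately apply Bedford--Taylor); the vanishing $c_n^*\omega\wedge c_n^*\omega=0$ itself is elementary once one observes that the target is a curve.
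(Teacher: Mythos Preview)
The paper does not actually prove this lemma: it is stated with attribution and referenced to \cite[Proposition~6.9]{favredujardin} for polynomial families and \cite[Theorem~6.1]{Article1} for the general case, with no argument given in the present text. So there is no in-paper proof to compare against.

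Your argument is correct and is essentially the standard one for the general (rational-map) case, matching the approach of the cited \cite{Article1}: the smooth forms $c_n^*\omega$ satisfy $(c_n^*\omega)^2=c_n^*(\omega\wedge\omega)=0$ because $\dim_\C\p^1=1$, and one passes to the limit via Bedford--Taylor. One small clarification worth making explicit: to obtain the telescoping bound $|v_{n+1}-v_n|\leq Cd^{-(n+1)}$ you must choose the lifts compatibly, i.e.\ fix a local holomorphic lift $F_\lambda:\C^2\to\C^2$ of $f_\lambda$ (homogeneous of degree $d$, nonvanishing resultant) and a lift $\tilde c$ of $c$, and set $(\sigma_n,\tau_n)=F_\lambda^{\circ n}(\tilde c(\lambda))$; then the uniform estimate $\bigl|\log\|F_\lambda(z)\|-d\log\|z\|\bigr|\leq C$ on compacta gives exactly what you need. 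With that choice of lifts your uniform convergence claim is rigorous and the rest follows.

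For completeness: the polynomial-family argument of Dujardin--Favre is organized somewhat differently, exploiting that the Green-function potential $g_c(\lambda)=G_\lambda(c(\lambda))$ is nonnegative, continuous, and vanishes on $\supp(T_c)$, whence $T_c\wedge T_c=dd^c(g_c\,T_c)=0$ directly. That shortcut is unavailable for general rational maps, which is why the limit argument you give is the right one in the setting of the present paper.
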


\par Assume that $(f_\lambda)_{\lambda\in\Lambda}$ is with $2d-2$ marked critical points $c_1,\ldots,c_k$ and $\dim\Lambda\geq k$ and let us set $H_i(k_i,p_i)\pe\big\{\lambda\in\Lambda \ | \ f_{\lambda}^{\circ(k_i+p_i)}(c_i(\lambda))=f_{\lambda}^{\circ p_i}(c_i(\lambda))$ and $f_{\lambda}^{\circ p_i}(c_i(\lambda))$ is repelling$\big\}$, for $1\leq i\leq k$.

\begin{definition}
If $\lambda_0\in\bigcap_{1\leq i\leq k}H_i(k_i,p_i)$, we say that $c_1,\ldots,c_k$ \emph{fall transversely onto repelling cycles} at $\lambda_0$ if the hypersurfaces $H_i$ are smooth at $\lambda_0$ and intersect transversely at $\lambda_0$. If they only intersect properly, we say that $c_1,\ldots,c_k$ \emph{fall properly onto repelling cycles} at $\lambda_0$.
\label{deftransvers}
\end{definition}

Dujardin \cite{higher} proved the following which we will use for proving Theorems \ref{maintheorem} and \ref{tmdim}.

\begin{theorem}[Dujardin]
Let $(f_\lambda)_{\lambda\in \Lambda}$ be a holomorphic family of degree $d$ rational maps with $2d-2$ marked critical points $c_1,\ldots,c_k$ and let $T_1,\ldots,T_k$ be their respective bifurcation currents. Then
\begin{center}
$\displaystyle\supp(T_1\wedge\cdots\wedge T_k)=\overline{\{\lambda\in\Lambda \ | \ c_1,\ldots,c_k\text{ fall transversely onto repelling cycles}\}}$.
\end{center}
\label{tmduj}
\end{theorem}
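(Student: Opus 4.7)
}

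The plan is to use the laminar structure of each bifurcation current $T_i$ coming from the hypersurfaces $H_i(k,p)$ on which $c_i$ is preperiodic to a repelling cycle. First I would establish an equidistribution statement of the form
\[
T_i = \lim_{n\to\infty} d^{-n}\bigl[Z_n^i\bigr],
\]
where $Z_n^i$ is (a suitable weighted sum of) hypersurfaces $H_i(k_i,p_i)$ with $k_i+p_i=n$. This is natural because $T_i=\mathrm{d}\mathrm{d}^c G_i$ with $G_i$ continuous and $\mathrm{d}^{-n}(f^n\circ c_i)^*\omega\to T_i$; parameters where the orbit of $c_i$ lands on a preassigned repelling periodic point are governed exactly by the mass of $T_i$. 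The outcome is that $T_i$ is (uniformly) laminar transversely to the family $\{H_i(k,p)\}_{k,p\ge 1}$, and that the hypersurfaces $H_i(k,p)$ are dense in $\supp(T_i)$.

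For the inclusion $\overline{\{\lambda\,|\,c_1,\ldots,c_k\text{ fall transversely onto repelling cycles}\}}\subset\supp(T_1\wedge\cdots\wedge T_k)$, I would argue as follows. Let $\lambda_0$ be such a transverse point and let $H_i=H_i(k_i,p_i)$ realize the transverse preperiodicity. Choose local coordinates so that $H_i=\{z_i=0\}$ near $\lambda_0$. The laminar structure at step 1 says that near $\lambda_0$, $T_i$ dominates a nonzero positive multiple of $[H_i]$ (more precisely, $T_i$ restricts to a uniformly laminar current with $H_i$ as one of its leaves). Because the $H_i$ meet transversely and each $T_i$ has continuous potential, the wedge product is well-defined in the sense of Bedford--Taylor, and the transversality theorem for laminar currents (this is precisely the intersection-theoretic content of \cite[Theorem~0.1]{higher}) yields a nonzero positive measure supported at $\lambda_0$. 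Hence $\lambda_0\in\supp(T_1\wedge\cdots\wedge T_k)$.

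The reverse inclusion is the more delicate one and proceeds by approximation. Suppose $\lambda_0\in\supp(T_1\wedge\cdots\wedge T_k)$, and let $U$ be a small neighborhood. By continuity of the potentials $G_i$ and the approximation in step 1,
\[
T_1\wedge\cdots\wedge T_k = \lim_{n_1,\ldots,n_k\to\infty} d^{-(n_1+\cdots+n_k)}\,[Z_{n_1}^1]\wedge\cdots\wedge[Z_{n_k}^k].
\]
Since the limit has positive mass in $U$, the intersections $Z_{n_1}^1\cap\cdots\cap Z_{n_k}^k$ must meet $U$ for large $n_i$. A Bertini/genericity argument — varying the cycles and preperiods among the irreducible components of the $H_i(k_i,p_i)$ — shows that these intersections can be taken generically transverse in $U$. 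Picking a transverse intersection point arbitrarily close to $\lambda_0$ produces a parameter where $c_1,\ldots,c_k$ fall transversely onto repelling cycles, proving density.

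The main obstacle is the first step: packaging the hypersurfaces $H_i(k,p)$ into a genuine \emph{uniformly laminar} decomposition of $T_i$, strong enough to justify the intersection-theoretic identity $T_1\wedge\cdots\wedge T_k = \sum (\text{intersections of leaves})$. In one complex dimension this is automatic, but here $T_i$ is only a positive closed $(1,1)$-current on a higher-dimensional base, so the uniform geometric structure of the preperiodicity hypersurfaces (equi-boundedness of volumes, non-collapsing, control of tangencies) has to be extracted from the dynamics; once that is in hand, the transversality/wedge formula and both inclusions follow cleanly.
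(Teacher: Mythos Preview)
The paper does not prove this statement: Theorem \ref{tmduj} is quoted from Dujardin \cite{higher} as a black-box input (see the sentence immediately preceding it, and the remark in the introduction that Dujardin proved the equality ``using a transversality Theorem for laminar currents''). There is therefore no proof in the present paper to compare your proposal against.

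That said, your outline is broadly faithful to Dujardin's actual strategy in \cite{higher}: one establishes a laminar/woven structure for each $T_i$ built from the preperiodicity hypersurfaces $H_i(k,p)$, and then uses a geometric intersection theorem for such currents to identify $\supp(T_1\wedge\cdots\wedge T_k)$ with the closure of the transverse-intersection locus. One point to watch: you invoke ``\cite[Theorem~0.1]{higher}'' as the key ingredient, but the statement you are proving \emph{is} (essentially) that theorem, so citing it inside your argument is circular. The real content --- which you correctly identify as the main obstacle --- is producing the laminar structure with enough uniformity to make the wedge product geometric; that is precisely what Dujardin's paper does, and it cannot be imported from \cite{higher} while simultaneously reproving its main result.
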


\subsection{The bifurcation currents of a holomorphic family.}
\par Every rational map $f$ of degree $d\geq2$ on the Riemann sphere admits a unique maximal entropy measure $\mu_f$. The Lyapunov exponent of $f$ with respect to $\mu_f$ is defined by
$$L(f)=\int_{\p^1}\log|f'|\mu_f~.$$
It turns out that, for any holomorphic family $(f_\lambda)_{\lambda\in\Lambda}$ of degree $d$ rational maps, the function $L:\Lambda\longrightarrow L(f_\lambda)$ is \emph{p.s.h} and continuous on $\Lambda$ (see \cite{DeMarco1}).

\begin{definition}
The \emph{bifurcation current} of the family $(f_\lambda)_{\lambda\in\Lambda}$ is the closed, positive $(1,1)$-current on $\Lambda$ defined by $T_\bif\pe dd^cL$.
\end{definition}

\par The support of $T_{\bif}$ coincides with the bifurcation locus of the family $(f_\lambda)_{\lambda\in \Lambda}$ in the sense of Ma\~n\'e-Sad-Sullivan, i.e. the closure of the set of discontinuity of the map $\lambda\in\Lambda\longmapsto \mathcal{J}_\lambda$. This actually follows from a formula by DeMarco (see \cite[Theorem 1.1]{DeMarco2} or \cite[Theorem 5.2]{BB1}), which, for families with $2d-2$ marked critical points $c_1,\ldots,c_{2d-2}$, may be stated as follows:
\begin{eqnarray*}
T_\bif=\sum_{i=1}^{2d-2}T_i.
\end{eqnarray*}

\begin{definition}
Let $1\leq k\leq \min(2d-2,\dim \Lambda)$. The $k^\textup{th}$-\emph{bifurcation current} of the family $(f_\lambda)_{\lambda\in \Lambda}$ is the closed positive $(k,k)$-current defined by $T_\bif^k\pe(dd^cL)^k$.
\end{definition}

\par Lemma \ref{lmTc} directly gives for $1\leq k\leq 2d-2$:
\begin{eqnarray}
T_\bif^k=k!\sum_{i_1<\cdots<i_k}T_{i_1}\wedge\cdots\wedge T_{i_k}.
\label{Demarco2}
\end{eqnarray}
The locus $\supp(T_\bif^k)$ can thus be interpreted as the set of parameters for which at least $k$ critical points are active in an ``independent'' manner.\nocite{Article2}

\subsection{Quadratic-like maps.}
\par Let $U,V\subset\C$ be topological discs with $U\Subset V$. We say that $f:U\rightarrow V$ is a \emph{quadratic-like map} if it is a degree $2$ branched cover. The \emph{filled-in Julia set} $\K(f)$ of $f$ is the set
\begin{center}
$\displaystyle\K(f)\pe\bigcap_{n\geq1}f^{-\circ n}(V)$
\end{center}
of points $z\in U$ such that $f^{\circ n}(z)\in V$ for any $n\geq1$. We say that the map $f$ is \emph{hybrid conjugate} to a quadratic polynomial $p_\zeta(z)\pe z^2+\zeta$ if there exists a quasi-conformal map $\varphi$ from a neighborhood of $\K_\zeta\pe\K(p_\zeta)$ to a neighborhood of $\K(f)$ which satisfies $\varphi\circ p_\zeta=f\circ\varphi$ and $\overline{\partial}\varphi=0$ on $\K_\zeta$.
\par Douady and Hubbard proved that for any holomorphic family of quadratic-like maps, the Mandelbrot set plays the role of a good model. Let us summarize here the properties of quadratic-like maps established by Douady and Hubbard (see \cite[Proposition 13 and Chapter IV]{DH}).

\begin{theorem}[Douady-Hubbard]
Let $(f_\lambda)_{\lambda\in \Lambda}$ be a holomorphic of quadratic-like maps parametrized by a complex manifold $\Lambda$. Let $\Mand_\Lambda\pe\{\lambda\in \Lambda \ | \ \K(f_\lambda)$ is connected$\}$. There exists a continuous map $\chi:\Mand_\Lambda\longrightarrow\Mand$ such that:
\begin{enumerate}
\item $\chi$ is holomorphic from $\mathring{\Mand}_\Lambda$ to $\mathring{\Mand}$,
\item for any $\lambda\in\Mand_\Lambda$, if $\chi(\lambda)=\zeta$, the map $f_\lambda$ is hybrid conjugate to $z^2+\zeta$ on $\K(f_\lambda)$,
\item for all $\zeta\in\Mand$, the set $\chi^{-1}\{\zeta\}$ is an analytic hypersurface,
\item if $\dim\Lambda=1$ and $\lambda_0\in\Mand_\Lambda$, then there exists a neighborhood $V\subset \Lambda$ of $\lambda_0$ such that either $\chi$ is constant along $V$ or $\chi(V)$ contains a neighborhood of $\chi(\lambda_0)$ in $\Mand$.
\end{enumerate}
\label{tmDH}
\end{theorem}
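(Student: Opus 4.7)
The plan is to construct $\chi$ pointwise through the Douady--Hubbard straightening theorem and then promote it to a holomorphic map by organizing the construction parametrically. For each fixed $\lambda \in \Mand_\Lambda$, I would first extend $f_\lambda : U_\lambda \to V_\lambda$ to a quasiregular degree $2$ branched cover $\tilde f_\lambda$ of the Riemann sphere, by gluing $f_\lambda$ on $U_\lambda$ to $z \mapsto z^2$ near infinity via a quasiconformal interpolation across the annulus $V_\lambda \setminus U_\lambda$. Pulling back the standard complex structure under iterates of $\tilde f_\lambda$ and setting it equal to the standard one on $\K(f_\lambda)$ yields a $\tilde f_\lambda$-invariant Beltrami coefficient $\mu_\lambda$ of uniformly bounded dilatation. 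The measurable Riemann mapping theorem (MRMT) then integrates $\mu_\lambda$ to a quasiconformal map $\varphi_\lambda$, normalized to fix $\infty$ and have the correct behavior at the superattracting fixed point; conjugating $\tilde f_\lambda$ by $\varphi_\lambda$ gives a degree $2$ rational map with a superattracting fixed point at $\infty$, hence is affine conjugate to $z^2 + \zeta$ for a unique $\zeta \in \C$. Setting $\chi(\lambda)\pe\zeta$ and noting that $\zeta \in \Mand$ (because $\K(f_\lambda)$ is connected) yields the map. Uniqueness and item (2) follow from the standard exchange argument: if two quadratic polynomials are hybrid conjugate on their filled Julia sets, then one can exchange the conjugacy with the identity outside to get a quasiconformal conjugacy which is conformal everywhere, forcing the two polynomials to coincide.

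For continuity (and holomorphy on the interior), I would choose the interpolation across $V_\lambda \setminus U_\lambda$ to depend holomorphically on $\lambda$, which makes $\mu_\lambda$ a holomorphic family of Beltrami coefficients in $L^\infty$. The Ahlfors--Bers theorem with parameters then gives $\varphi_\lambda$ depending holomorphically (resp. continuously) on $\lambda$ in the holomorphic (resp. merely $L^\infty$-continuous) regime, and so does the straightened polynomial. On $\mathring{\Mand}_\Lambda$, the dilatation of $\mu_\lambda$ stays under control because the dynamics remains J-stable; this yields holomorphy of $\chi$ there. On all of $\Mand_\Lambda$ one only gets continuity because near $\partial \Mand_\Lambda$ the construction degenerates.

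For item (3), the fiber $\chi^{-1}\{\zeta\}$ is the locus where $f_\lambda$ is hybrid conjugate to $p_\zeta$. Fix $\lambda_0$ with $\chi(\lambda_0) = \zeta$ and pick a hybrid conjugacy $\psi_{\lambda_0}$ between $f_{\lambda_0}$ and $p_\zeta$. I would describe a neighborhood of $\lambda_0$ in $\chi^{-1}\{\zeta\}$ as the zero locus of a suitable holomorphic function: either directly, because on the interior $\chi$ is a $\C$-valued holomorphic function whose non-empty level sets are analytic hypersurfaces; or, for boundary values $\zeta \in \partial \Mand$, by realizing hybrid equivalence as the existence of a quasiconformal conjugacy conformal on $\K(f_\lambda)$ and using the Ahlfors--Bers theorem with parameters to exhibit such a conjugacy whenever one holomorphic equation in $\lambda$ is satisfied. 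For item (4), I would exploit one-dimensionality: if $\lambda_0 \in \mathring{\Mand}_\Lambda$, the open mapping theorem applied to the scalar holomorphic function $\chi$ gives the dichotomy at once. If $\lambda_0 \in \partial \Mand_\Lambda$, I would combine continuity of $\chi$ with a perturbation argument: parameters in $\Mand_\Lambda$ accumulating on $\lambda_0$ project via $\chi$ to parameters in $\Mand$ accumulating on $\chi(\lambda_0)$, and either a non-trivial such projection covers a neighborhood of $\chi(\lambda_0)$ in $\Mand$ (by combining interior holomorphy with the known local structure of $\partial \Mand$) or $\chi$ is locally constant.

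The main obstacle is the parametric organization of the straightening, in particular making the interpolation depend holomorphically on $\lambda$ and choosing the MRMT normalization so that the conjugated map is exactly in the normal form $z \mapsto z^2 + \chi(\lambda)$ rather than merely affinely conjugate to it; any arbitrary choice would only give $\chi$ as a well-defined map into the moduli space of quadratic polynomials, and the extra step of pinning down the distinguished parameter $\zeta$ must itself be shown to be holomorphic in $\lambda$. A secondary subtlety is that in item (4) the dichotomy must be established at boundary points $\lambda_0\in\partial\Mand_\Lambda$, where the holomorphy of $\chi$ is not directly available and one must argue via continuity together with the topological structure of $\partial \Mand$.
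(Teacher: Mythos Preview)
The paper does not give its own proof of this statement: Theorem~\ref{tmDH} is stated in the preliminaries as a summary of results ``established by Douady and Hubbard (see \cite[Proposition 13 and Chapter IV]{DH})'' and is simply quoted without argument. There is therefore nothing in the paper to compare your proposal against.

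That said, your sketch is essentially the standard Douady--Hubbard argument from \cite{DH}: extend $f_\lambda$ quasiregularly, pull back the standard structure to get an invariant Beltrami field, straighten via MRMT, and use Ahlfors--Bers with parameters for the regularity in $\lambda$. Two remarks on accuracy. First, your explanation of holomorphy on $\mathring{\Mand}_\Lambda$ (``the dilatation stays under control because the dynamics remains J-stable'') is not the actual mechanism: holomorphy comes directly from the holomorphic dependence of $\mu_\lambda$ on $\lambda$ together with the parametric MRMT, and has nothing to do with J-stability. Second, your treatment of items (3) and (4) at boundary points is where the real work lies, and your outline there is too vague to be called a proof; in \cite{DH} these require a careful analysis (tubings, mating of hybrid classes, and a winding-number/topological-degree argument for item (4)) rather than a soft appeal to ``continuity together with the topological structure of $\partial\Mand$''. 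If you intend to fill this in yourself rather than cite \cite{DH}, those are the places that need substance.
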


\par The map $\chi$ defined in Theorem \ref{tmDH} is called the \emph{straightening map} of the family $(f_\lambda)_{\lambda\in\Lambda}$. Denote by \rotatebox[origin=c]{270}{$\heartsuit$} the main cardioid of the Mandelbrot set $\Mand$. Combined with the fact that the multiplier of the non-repelling fixed point parametrizes the closure of \rotatebox[origin=c]{270}{$\heartsuit$}, Theorem \ref{tmDH} gives (see also \cite[Section 3.2]{BB2} for a proof based on potential theoretic arguments):

\begin{corollary}[Bassanelli-Berteloot, Douady-Hubbard]
For any $\theta\in\R$, the set of $\zeta\in\C$ for which $p_\zeta$ has a cycle of multiplier $e^{2i\pi\theta}$ is dense in $\partial\Mand$.
\label{density}
\end{corollary}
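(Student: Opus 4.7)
The plan is to combine Theorem~\ref{tmDH} with the parametrization of $\partial\heartsuit$ by the multiplier at the non-repelling fixed point. Fix $\theta\in\R$, $\zeta_0\in\partial\Mand$, and a neighborhood $V$ of $\zeta_0$ in $\C$; we seek some $\zeta\in V$ such that $p_\zeta$ has a cycle of multiplier $e^{2i\pi\theta}$.

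The first ingredient is to produce, arbitrarily close to $\zeta_0$, a parameter disk $W\subset V$ over which some iterate $p_\zeta^{n}$ admits a holomorphic family of quadratic-like restrictions $f_\zeta:U_\zeta\to V_\zeta$, with Mandelbrot locus $\Mand_W\subset W$ forming a small copy of $\Mand$. Arranging a \emph{primitive} renormalization guarantees that the fixed point of $f_\zeta$ is a cycle of $p_\zeta$ of exact period $n$. The density of such small copies in $\partial\Mand$ follows from Douady--Hubbard renormalization theory: Misiurewicz parameters (critical orbit strictly preperiodic to a repelling cycle) are dense in $\partial\Mand$, and a primitive renormalization can be built in a neighborhood of each.

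Next, Theorem~\ref{tmDH} applied to $(f_\zeta)_{\zeta\in W}$ yields a continuous straightening $\chi:\Mand_W\to\Mand$. The closure of the main cardioid is parametrized by the fixed-point multiplier via $w\mapsto w/2-w^2/4$ on $\overline{\D}$; in particular the unique $\zeta_\theta\in\partial\heartsuit$ for which $p_{\zeta_\theta}$ carries a fixed point of multiplier $e^{2i\pi\theta}$ is a well-defined point of $\partial\heartsuit\subset\Mand$. By surjectivity of $\chi$, pick $\zeta_1\in\chi^{-1}(\zeta_\theta)\subset W$. Property (2) of Theorem~\ref{tmDH} provides a hybrid conjugacy $\varphi$ between $f_{\zeta_1}$ and $p_{\zeta_\theta}$, quasi-conformal and satisfying $\bar\partial\varphi=0$ on the filled Julia set $\K_{\zeta_\theta}$, where the indifferent fixed point sits. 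The conjugacy is holomorphic there and hence preserves the multiplier, so $f_{\zeta_1}$ has a fixed point of multiplier $e^{2i\pi\theta}$; by primitivity, the corresponding cycle of $p_{\zeta_1}$ has exact period $n$ with the same multiplier. Since $\zeta_1\in V$, this yields the density.

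The main obstacle is the first step, the production of primitive Mandelbrot copies densely in $\partial\Mand$; this is classical but genuinely needs renormalization theory. A cleaner alternative is to invoke the Bassanelli--Berteloot equidistribution $d^{-n}[\Per_n(e^{2i\pi\theta})]\to T_\bif$ from \cite{BB2}, already recalled in the introduction for the quadratic family, where $\supp T_\bif=\partial\Mand$: this directly gives the density of $\bigcup_n\Per_n(e^{2i\pi\theta})$ in $\partial\Mand$ without appealing to renormalization.
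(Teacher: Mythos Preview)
Your approach is exactly what the paper sketches: it states (without a detailed proof) that the corollary follows from Theorem~\ref{tmDH} together with the multiplier parametrization of the closure of the main cardioid, and cites \cite[Section~3.2]{BB2} for the potential-theoretic alternative you also offer at the end. Your care in insisting on \emph{primitive} small copies is well placed, since otherwise the fixed point of the quadratic-like restriction could correspond to a $p_\zeta$-cycle of strictly smaller period whose multiplier is only a root of $e^{2i\pi\theta}$.

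One point deserves a cleaner justification. You write that since $\bar\partial\varphi=0$ on $\K_{\zeta_\theta}$ and the indifferent fixed point lies there, ``the conjugacy is holomorphic there and hence preserves the multiplier''. This is not quite rigorous: the hybrid condition is only almost-everywhere, the indifferent fixed point sits on $\partial\K_{\zeta_\theta}$, and in the Cremer case $\K_{\zeta_\theta}$ may have zero Lebesgue measure, so pointwise conformality at the fixed point does not follow formally. The invariance of an indifferent multiplier under hybrid (indeed topological) conjugacy is a separate fact: Naishul's theorem covers the irrational case and the rigidity of parabolic multipliers the rational one; alternatively, Lemma~3 of \cite{buffhenriksen}, which the paper itself invokes later for precisely this purpose, gives the statement directly.
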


\par Let $g_\zeta(z)\pe p_\zeta(z)+h(\zeta,z)$ be a holomorphic family of maps defined for $(\zeta,z)\in\D(0,R)\times\D(0,R)$, where $R>10$ and $g_\zeta'(0)=0$. Denote by $M_g$ the set of $\zeta\in\D(0,R)$ such that the orbit $(g^{\circ n}_\zeta(0))_n$ remains in $\D(0,R)$ for any $n>0$. In what follows, we will use the following lemma which is due to McMullen (see \cite[Lemma 4.2]{McMullen3}):

\begin{lemma}[McMullen]
There exists $\delta>0$ such that if $\sup_{(\zeta,z)}|h(\zeta,z)|=\epsilon<\delta$ then there exists a homeomorphism $\varphi:\Mand\longrightarrow M_g$ such that:
\begin{enumerate}
\item $g_{\phi(\zeta)}$ is hybrid conjugate to $p_\zeta$ for any $\zeta\in\Mand$,
\item $|\varphi(\zeta)-\zeta|<O(\epsilon)$,
\item $\varphi$ extends to a $(1+\epsilon/\delta)$-quasiconformal homeomorphism of $\C$.
\end{enumerate}
\label{lmMcM}
\end{lemma}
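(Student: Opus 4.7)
The plan is to view $(g_\zeta)_{\zeta}$ as a holomorphic family of quadratic-like maps over a neighborhood of $\Mand$, apply Douady--Hubbard's straightening theorem (Theorem \ref{tmDH}) to obtain a continuous straightening $\chi:M_g\to\Mand$, and then invert $\chi$ in a QC-controlled manner to define $\varphi=\chi^{-1}$.

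First I would fix the quadratic-like structure. Pick $r'$ with $2<r'<R-1$. Since $\Mand\subset\overline{\D(0,2)}$ and $R>10$, there is an open neighborhood $\Omega$ of $\Mand$ in $\C$ on which $p_\zeta^{-1}(\D(0,r'))\Subset\D(0,r')$, so that $p_\zeta:p_\zeta^{-1}(\D(0,r'))\to\D(0,r')$ is quadratic-like. Choosing $\delta$ small enough, for $\epsilon<\delta$ the same inclusion $g_\zeta^{-1}(\D(0,r'))\Subset\D(0,r')$ persists on $\Omega$, producing a holomorphic family of quadratic-like maps. Theorem \ref{tmDH} then yields a continuous straightening $\chi:M_g\to\Mand$ with $g_\zeta$ hybrid conjugate to $p_{\chi(\zeta)}$.

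Next I would build the inverse directly via the Measurable Riemann Mapping Theorem. For each $\zeta_0\in\Mand$ and each candidate $\zeta$ near $\zeta_0$, identify the fundamental annulus $\D(0,r')\setminus p_{\zeta_0}^{-1}(\D(0,r'))$ with the corresponding annulus of $g_\zeta$ by a QC map of dilatation $O(\epsilon+|\zeta-\zeta_0|)$, spread the resulting Beltrami form by $p_{\zeta_0}$-dynamics on $\C\setminus\K(p_{\zeta_0})$ and extend it by zero on $\K(p_{\zeta_0})$ and outside $\D(0,r')$. The resulting $p_{\zeta_0}$-invariant Beltrami form $\mu$ has $\|\mu\|_\infty=O(\epsilon)$. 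Solving $\bar\partial\Psi=\mu\,\partial\Psi$ normalized tangent to the identity at $\infty$ produces a $(1+O(\epsilon))$-QC homeomorphism $\Psi=\Psi_{\zeta,\zeta_0}$ of $\C$ such that $\Psi\circ p_{\zeta_0}\circ\Psi^{-1}$ is a quadratic polynomial $z^2+\zeta^*$ hybrid conjugate to $g_\zeta$. Since $\zeta\mapsto\zeta^*$ is continuous and $O(\epsilon)$-close to $\zeta_0$, a fixed-point argument yields a unique $\zeta=\varphi(\zeta_0)$ with $\zeta^*=\zeta$, hence $\chi(\varphi(\zeta_0))=\zeta_0$. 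The map $\Psi_{\varphi(\zeta_0),\zeta_0}$ furnishes the global $(1+\epsilon/\delta)$-QC extension of $\varphi$, while the bound $|\varphi(\zeta)-\zeta|=O(\epsilon)$ follows from explicit control on $\Psi_{\varphi(\zeta_0),\zeta_0}(\zeta_0)$.

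The main obstacle is establishing the \emph{uniform global} character of this construction: the Beltrami form depends on $\zeta_0$ through the dynamics of $p_{\zeta_0}$, which becomes combinatorially unstable as $\zeta_0$ approaches $\partial\Mand$. Patching the pointwise MRMT constructions into a single continuous $\varphi:\Mand\to M_g$ with a single global QC extension of $\C$ requires the holomorphic dependence of the Beltrami coefficient on $\zeta_0$ across the hyperbolic components of $\mathring{\Mand}$, together with a compactness/continuity argument at $\partial\Mand$; this is where the quantitative bound $(1+\epsilon/\delta)$ is actually pinned down, and it is the technical heart of McMullen's argument.
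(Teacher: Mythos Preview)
The paper does not prove this lemma itself but cites \cite[Lemma~4.2]{McMullen3}. Your outline for items (1) and (2) --- set up the quadratic-like family, invoke Theorem~\ref{tmDH} to obtain the straightening map $\chi$, and invert it by a fixed-point argument since $\chi$ is $O(\epsilon)$-close to the identity --- is reasonable. The genuine gap is in item (3). The map $\Psi_{\zeta,\zeta_0}$ you build via the Measurable Riemann Mapping Theorem is a quasiconformal self-map of the \emph{dynamical} $z$-plane: it conjugates $p_{\zeta_0}$ to another quadratic polynomial. But $\varphi$ is a map of the \emph{parameter} $\zeta$-plane, and item (3) demands a single quasiconformal homeomorphism of that plane extending $\varphi|_{\Mand}$. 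Your sentence ``$\Psi_{\varphi(\zeta_0),\zeta_0}$ furnishes the global $(1+\epsilon/\delta)$-QC extension of $\varphi$'' conflates these two copies of $\C$. Even exploiting the coincidence $p_{\zeta}(0)=\zeta$ between critical value and parameter, what you obtain is, for each $\zeta_0$, a \emph{different} dynamical-plane map $\Psi_{\varphi(\zeta_0),\zeta_0}$, never a single parameter-plane homeomorphism. Your last paragraph senses trouble but misdiagnoses it as a continuity/patching issue in $\zeta_0$; the construction is simply in the wrong space.

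McMullen's mechanism works directly in the parameter plane via a holomorphic motion and the $\lambda$-lemma. One embeds the given perturbation in a holomorphic one-parameter family $g_\zeta^s(z)=z^2+\zeta+s\,h(\zeta,z)$ with $s$ ranging over the disk $|s|<\delta/\epsilon$, so that $|sh|<\delta$ throughout and $s=1$ recovers $g_\zeta$. One then shows that the connectedness loci $M_s$ (equivalently their complements, tracked through the escaping critical orbit) move as a holomorphic motion of the $\zeta$-plane parametrized by $s$. S{\l}odkowski's extended $\lambda$-lemma promotes this to a motion of all of $\C$, and the dilatation at time $s=1$ is bounded by $(1+\epsilon/\delta)/(1-\epsilon/\delta)$; this is precisely where the constant in (3) originates. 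That single parameter-plane map delivers (1)--(3) simultaneously, with no dynamical-plane surgery required.
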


\section{The Mandelbrot set is universal, revisited.}

\par Let $(f_\lambda)_{\lambda\in\Lambda}$ be a holomorphic family of degree $d$ rational maps. In the present section, we want to prove that, under some reasonable condition on the family, the parameter space $\Lambda$ contains homeomorphically embedded copies of $\Mand^k\times\D^{\dim\Lambda-k}$, which generalizes the work \emph{The Mandelbrot set is universal} \cite{McMullen3} of McMullen. The main result of this section is the following.

\begin{theorem}
Let $(f_\lambda)_{\lambda\in\D^m}$ be a holomorphic family of degree $d$ rational maps with marked simple critical points $c_1,\ldots,c_k$ with $k\leq m$. Assume that $c_1,\ldots,c_k$ fall transversely onto repelling cycles at $0$. Then, for any $\epsilon>0$, there exists a homeomorphic embedding $\Phi:\Mand^k\times\D^{m-k}\longrightarrow\D^m$ and a continuous family $\{\varphi_{\zeta,t,i}:\p^1\longrightarrow\p^1\}_{(\zeta,t)\in\Mand^k\times\D^{m-k},1\leq i\leq k}$ of $(1+O(\epsilon))$-quasi-conformal homeomorphisms satisfying the following properties:
\begin{enumerate}
\item $\Phi(\zeta,\cdot):\D^{m-k}\longrightarrow\D^m$ is holomorphic for any $\zeta\in\Mand^k$,
\item $\Phi$ is holomorphic on $(\mathring{\Mand})^k\times\D^{m-k}$,
\item $\dim_H\big(\Phi((\partial\Mand)^k\times\D^{m-k})\big)\geq 2m-O(\epsilon)$,
\item for any $1\leq i\leq k$, there exists $n_i\geq1$ such that  $\varphi_{\zeta,t,i}\circ p_{\zeta_i}=f_{\Phi(\zeta,t)}^{\circ n_i}\circ \varphi_{\zeta,t,i}$ on $\K_{\zeta_i}$ and the conjugacy is hybrid.
\end{enumerate}
\label{propmuniv}
\end{theorem}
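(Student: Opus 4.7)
The plan is to apply McMullen's renormalization lemma (Lemma \ref{lmMcM}) once per critical point and couple the resulting $k$ straightening equations by a Banach fixed-point argument that exploits the transversality of the preperiodicity hypersurfaces. First, I would choose holomorphic coordinates $(\lambda_1,\ldots,\lambda_m)$ on $\D^m$ in which each hypersurface $H_i$ is tangent to the coordinate hyperplane $\{\lambda_i=0\}$ at the origin; this is possible precisely because the $H_i$ are smooth and pairwise transverse at $0$. In these adapted coordinates the activity of $c_i$ at $0$ points along the $\lambda_i$-axis, while varying any $\lambda_j$ with $j\neq i$ leaves the orbit of $c_i$ close to its reference repelling cycle, and this is the geometric feature that allows the $k$ renormalizations to be coupled.

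Next, I would apply McMullen's 1-parameter construction to each critical point individually. Fix $i\in\{1,\ldots,k\}$, freeze $\mu_i:=(\lambda_j)_{j\neq i}$, and consider the one-parameter family $\lambda_i\mapsto f_{(\lambda_i,\mu_i)}$. The transverse preperiodicity of $c_i$ at $0$ places us in the situation of \cite{McMullen3}: for some iterate $n_i$, there is a domain $U_{i,\lambda}\subset\p^1$ on which $f_\lambda^{\circ n_i}$ is quadratic-like with critical point a small perturbation of $c_i(\lambda)$. After an affine rescaling in $\lambda_i$, Lemma \ref{lmMcM} produces a continuous map $\psi_i:\Mand\times\D^{m-1}\to\C$ with $|\psi_i(\zeta,\mu_i)-\zeta|=O(\epsilon)$ such that, when $\lambda_i=\psi_i(\zeta,\mu_i)$, the quadratic-like restriction is hybrid-conjugate to $p_\zeta(z)=z^2+\zeta$ via a $(1+O(\epsilon))$-QC homeomorphism $\varphi_{\zeta,\mu_i,i}$ of $\p^1$. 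By Theorem \ref{tmDH}(1), $\psi_i$ is holomorphic on $\mathring{\Mand}\times\D^{m-1}$.

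To build $\Phi$, given $\zeta=(\zeta_1,\ldots,\zeta_k)\in\Mand^k$ and $t=(\lambda_{k+1},\ldots,\lambda_m)\in\D^{m-k}$, I solve the coupled system
\begin{equation*}
\lambda_i=\psi_i\bigl(\zeta_i,\lambda_1,\ldots,\widehat{\lambda_i},\ldots,\lambda_k,t\bigr),\quad i=1,\ldots,k,
\end{equation*}
for $(\lambda_1,\ldots,\lambda_k)$. Because each $\psi_i$ is an $O(\epsilon)$-perturbation of the projection $(\lambda_1,\ldots,\lambda_k)\mapsto\zeta_i$, the right-hand side is a contraction for $\epsilon$ small, and the Banach fixed-point theorem yields a unique continuous solution. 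Setting $\Phi(\zeta,t):=(\lambda_1(\zeta,t),\ldots,\lambda_k(\zeta,t),t)$ gives the desired homeomorphic embedding. Items (1) and (2) follow from parameter-dependent implicit function arguments, using the holomorphy of the $\psi_i$ on $(\mathring{\Mand})^k\times\D^{m-k}$ and in each $t$-slice respectively; item (4) is built into the construction, and the compact sets $\K_i$ can be taken pairwise disjoint because the quadratic-like restrictions occur along pieces of orbit of distinct critical points.

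Finally, (3) follows from combining Shishikura's theorem $\dim_H(\partial\Mand)=2$, the super-additivity $\dim_H(A\times B)\geq\dim_H A+\dim_H B$, and the $(1+O(\epsilon))$-QC bounds on $\Phi$ in each $\zeta$-variable, which imply that $\Phi^{-1}$ is $(1-O(\epsilon))$-H\"older; one obtains $\dim_H\Phi((\partial\Mand)^k\times\D^{m-k})\geq 2m/(1+O(\epsilon))\geq 2m-O(\epsilon)$. The main obstacle, in my view, is the coupling step: McMullen's universality is naturally a 1-parameter phenomenon, and one must verify that the $k$ quadratic-like structures persist \emph{simultaneously} under perturbation of all $\lambda_j$. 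The transversality hypothesis is what makes this possible, but propagating McMullen's quantitative estimates uniformly through all parameters, and controlling the dependence of the $\psi_i$ on $\mu_i$, is the technical heart of the argument.
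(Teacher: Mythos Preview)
Your fixed-point approach is genuinely different from the paper's. The paper proceeds by \emph{induction} on $j$: at step $j+1$ it restricts to a one-parameter subfamily $\Lambda_{j+1}$ in which $c_1,\ldots,c_j$ are already held in a fixed Mandelbrot configuration (via $\Phi_j$), applies McMullen's one-parameter theorem to $c_{j+1}$ alone to produce a new tube $\Psi_{j+1}$, and defines $\Phi_{j+1}(\zeta,x')$ as the unique transverse intersection of the holomorphic graph $\Phi_j(\{\zeta_1,\ldots,\zeta_j\}\times\D\times\{x'\})$ with the graph $\Psi_{j+1}(\D^j\times\{\zeta_{j+1}\}\times\{x'\})$. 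Your proposal instead applies McMullen to all $k$ critical points simultaneously around the single base point $0$ and couples the resulting equations by a contraction. Both routes use the same ingredients (Lemma~\ref{lmMcM}, Theorem~\ref{tmDH}, Shishikura), and yours is arguably more direct, avoiding the auxiliary quasiconformal embeddings $\phi_i:\Mand\hookrightarrow\Mand$ that the paper invokes to shrink into a neighborhood of the inductive base point $\zeta^{(j)}$.

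There is, however, a real gap in your argument for item~(3). You assert that the $(1+O(\epsilon))$-QC bounds on $\Phi$ \emph{in each $\zeta_i$-variable separately} make $\Phi^{-1}$ globally $(1-O(\epsilon))$-H\"older. Separate $\alpha$-H\"older upper bounds do combine into a joint upper bound by the triangle inequality, but the \emph{lower} bound $\|\Phi(\zeta)-\Phi(\zeta')\|\gtrsim\|\zeta-\zeta'\|^{1/\alpha}$, which is what controls $\dim_H$ of the image from below, does not follow: increments of $\Phi$ along different $\zeta_i$-directions can partially cancel. The paper closes this via Lemma~\ref{lmdimH}, whose essential hypothesis is that the image of each coordinate slice $\{\zeta_j=\text{const},\ j\neq i\}$ under $\Phi$ lies inside a coordinate line $\{a_1\}\times\cdots\times\C\times\cdots\times\{a_m\}$. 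This is \emph{built into} the inductive construction, since each new $\Psi_{j+1}$ alters only the $(j{+}1)$-st coordinate while $\Phi_j$ preserves the coordinates $x_{j+1},\ldots,x_m$. Your fixed-point solution does not enjoy this property: perturbing $\zeta_1$ moves $\lambda_1$ directly, which through the coupled equations moves every $\lambda_j$. To rescue~(3) along your route you would need either a bi-H\"older product lemma tolerating small off-diagonal coupling, or to show that the $k$ straightening fibers $\phi_i^{-1}(\zeta_i)$ can be simultaneously flattened to coordinate hyperplanes; neither is supplied, and the second is not obviously possible since those fibers are only analytic hypersurfaces varying non-holomorphically with $\zeta_i\in\partial\Mand$.
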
	

It is the combination of this result with \cite[Theorem 6.2]{Article1} which will actually give Theorem \ref{mainthmMcM} (see Section \ref{sec:tm2}).

\subsection{Technical lemmas.}
To give Hausdorff dimension of estimates, we will need the two following lemmas. The first one is due to McMullen (see \cite[Lemma 5.1]{McMullen3}) and a proof of the second one is provided.

\begin{lemma}
Let $Y$ be a metric space and $X\subset Y\times[0,1]^k$. Denote by $X_t$ the slice $X_t\pe\{y\in Y \ | \ (y,t)\in X\}$. If $X_t\neq\emptyset$ for almost every $t\in[0,1]^k$, then
\begin{center}
$\dim_H\big(X\big)\geq k+\dim_H\big(X_t\big)$, for almost every $t$.
\end{center}
\label{lmdimHMcMullen}
\end{lemma}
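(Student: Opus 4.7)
The plan is to deduce the bound from a Marstrand-type slicing estimate: for every $s>\dim_H(X)$, one has $\dim_H(X_t) \le s-k$ for Lebesgue-a.e. $t\in[0,1]^k$. Taking a sequence $s_n\downarrow\dim_H(X)$ and intersecting the corresponding exceptional null sets then gives $\dim_H(X_t)\le\dim_H(X)-k$ outside a Lebesgue-null set of $t$'s, which rearranges to the stated inequality. Note that the hypothesis $X_t\neq\emptyset$ a.e. guarantees that $\dim_H(X_t)\ge 0$ a.e., so the conclusion forces $\dim_H(X)\ge k$, consistent with the inequality.

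First I would fix $s>\dim_H(X)$ and, for arbitrary $\varepsilon,\delta>0$, choose a countable cover $\{U_i\}_{i\ge 1}$ of $X$ by sets of diameter less than $\delta$ with $\sum_{i}\mathrm{diam}(U_i)^s<\varepsilon$. Equip $Y\times[0,1]^k$ with the max product metric, so both coordinate projections are $1$-Lipschitz. Writing $\pi:Y\times[0,1]^k\to[0,1]^k$ for the projection, the slice $(U_i)_t\subset Y$ is non-empty only when $t\in\pi(U_i)$, and in that case has diameter at most $\mathrm{diam}(U_i)$. This gives the pointwise bound
\begin{equation*}
\mathcal{H}^{s-k}_\delta(X_t)\;\le\;\sum_{i\ge 1}\mathrm{diam}(U_i)^{s-k}\,\mathbf{1}_{\pi(U_i)}(t),
\end{equation*}
whose right-hand side, a countable sum of indicators of Borel sets times constants, is Borel-measurable in $t$.

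Next, since $\mathrm{diam}(\pi(U_i))\le\mathrm{diam}(U_i)$, the Lebesgue measure $\mathcal{L}^k(\pi(U_i))$ is at most $c_k\,\mathrm{diam}(U_i)^k$. Integrating the displayed inequality over $[0,1]^k$ and applying Tonelli yields
\begin{equation*}
\int_{[0,1]^k}\mathcal{H}^{s-k}_\delta(X_t)\,dt\;\le\;c_k\sum_{i\ge 1}\mathrm{diam}(U_i)^s\;<\;c_k\varepsilon.
\end{equation*}
Letting $\varepsilon\to 0$ along a countable sequence gives $\mathcal{H}^{s-k}_\delta(X_t)=0$ for $t$ outside a Lebesgue-null set depending on $\delta$; taking $\delta\to 0$ through a countable sequence and using monotone convergence then gives $\mathcal{H}^{s-k}(X_t)=0$, hence $\dim_H(X_t)\le s-k$, for $t$ outside a Lebesgue-null set $N_s\subset[0,1]^k$. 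Setting $N=\bigcup_{n}N_{s_n}$ for $s_n\downarrow\dim_H(X)$ finishes the proof.

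The only subtle step is the application of Tonelli: the map $t\mapsto\mathcal{H}^{s-k}_\delta(X_t)$ need not be Borel measurable in general, which is why I work throughout with the explicit measurable majorant above rather than the premeasure itself. Once this bookkeeping is in place, the rest of the argument is a direct computation in the spirit of Marstrand's classical slicing inequality, and I do not anticipate any deeper obstacle.
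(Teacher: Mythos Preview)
The paper does not give its own proof of this lemma: it attributes the result to McMullen and cites \cite[Lemma~5.1]{McMullen3}, providing a proof only for the companion Lemma~\ref{lmdimH}. So there is no in-paper argument to compare against; your task amounts to supplying the omitted proof.

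Your argument is the standard Marstrand-type slicing inequality and is correct. The logic --- cover $X$ efficiently at scale $\delta$ in dimension $s$, slice the cover, bound the $(s-k)$-premeasure of $X_t$ by a sum of indicators, integrate using $\mathcal{L}^k(\pi(U_i))\lesssim\mathrm{diam}(U_i)^k$, then let $\varepsilon,\delta\downarrow 0$ and $s\downarrow\dim_H(X)$ along countable sequences --- is exactly how this is done, and is almost certainly what McMullen's cited proof does as well. Your remark that the hypothesis $X_t\neq\emptyset$ a.e.\ forces $\dim_H(X)\geq k$ (via $\mathcal{L}^k(\pi(X))>0$ and the Lipschitz projection) is the right way to ensure $s-k>0$ throughout.

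One small technical correction: you call the $\pi(U_i)$ ``Borel sets,'' but projections of Borel sets need not be Borel (only analytic), and the $U_i$ in a Hausdorff cover need not even be Borel to begin with. This is harmless for your purposes: simply replace $\mathbf{1}_{\pi(U_i)}$ by $\mathbf{1}_{B(t_i,\mathrm{diam}(U_i))}$ for any chosen $t_i\in\pi(U_i)$, which is Borel and still majorizes. With that cosmetic fix, the proof goes through as written.
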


\par Let us recall that a map $h:(X,d)\longrightarrow(Y,d')$ is $\alpha$-\emph{bi-H\"older} with constant $C>0$ if $0<\alpha\leq1$ and
\begin{center}
$C^{-1}d'(x,x')^{1/\alpha}\leq d(f(x),f(x'))\leq Cd(x,x')^\alpha$, for any $x,x'\in X$.
\end{center}

\begin{lemma}
Let $E_1,\ldots,E_k\subset\D$ and $f:E_1\times\cdots\times E_k\longrightarrow\C^k$ be a map. Assume that there exists $C>0$ and $0<\alpha\leq1$ such that for any $1\leq j\leq k$ and any $x_i\in E_i$ with $i\neq j$, for all $x,x'\in X_j$,
\begin{center}
$x\longmapsto f(x_1,\ldots,x_{j-1},x,x_{j+1},\ldots,x_k)$
\end{center}
is $\alpha$-bi-H\"older with constant $C$ and
\begin{center}
$f(\{x_1,\ldots,x_{j-1}\}\times E_j\times\{x_{j+1},\ldots,x_k\})\subset\{a_1,\ldots,a_{j-1}\}\times\C\times\{a_{j+1},\ldots,a_k\}$
\end{center}
for some $a_i\in\C$, $i\neq j$ only depending on $f$ and the $x_i$, $i\neq j$. Then $f$ is $\alpha$-bi-H\"older with constant $C\cdot\max\{k,k^{1/2\alpha}\}$. In particular,
$$\dim_H(f(E_1\times\cdots\times E_k))\geq\alpha\sum_{j=1}^k\dim_H(E_j)~.$$
\label{lmdimH}
\end{lemma}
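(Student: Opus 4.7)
The plan is first to recognize that the two structural hypotheses together force $f$ to split as a coordinate-wise product. Writing $f=(f_1,\ldots,f_k)$, the second hypothesis says that fixing all $x_i$ with $i\neq j$ and varying only $x_j$, each component $f_i$ with $i\neq j$ remains constant (equal to the $a_i$). Applying this for every $j$ yields that $f_i$ depends only on $x_i$. Hence $f(x_1,\ldots,x_k)=(f_1(x_1),\ldots,f_k(x_k))$ and the first hypothesis, specialized to this setting, amounts to saying each $f_i:E_i\to\C$ is $\alpha$-bi-H\"older with constant $C$.

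Next I would prove the two H\"older estimates separately. For the upper estimate, I compute
$$|f(x)-f(x')|^{2}=\sum_{i=1}^{k}|f_i(x_i)-f_i(x'_i)|^{2}\le C^{2}\sum_{i=1}^{k}|x_i-x'_i|^{2\alpha}.$$
Since $0<\alpha\le 1$, the concavity of $t\mapsto t^{\alpha}$ (equivalently, the power-mean inequality) gives $\sum_{i}a_i^{\alpha}\le k^{1-\alpha}(\sum_i a_i)^{\alpha}$ for $a_i\ge 0$. Applying this with $a_i=|x_i-x'_i|^{2}$ yields $|f(x)-f(x')|\le Ck^{(1-\alpha)/2}|x-x'|^{\alpha}$. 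For the lower estimate, I use that $|f(x)-f(x')|\ge\max_j|f_j(x_j)-f_j(x'_j)|\ge C^{-1}\max_j|x_j-x'_j|^{1/\alpha}$, and combine with $\max_j|x_j-x'_j|\ge k^{-1/2}|x-x'|$ to get $|f(x)-f(x')|\ge C^{-1}k^{-1/(2\alpha)}|x-x'|^{1/\alpha}$. Since $k^{(1-\alpha)/2}\le k$ and $k^{1/(2\alpha)}\le\max\{k,k^{1/(2\alpha)}\}$, the bi-H\"older constant $C\cdot\max\{k,k^{1/(2\alpha)}\}$ falls out.

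Finally, for the dimension estimate, I would appeal to the standard fact that an $\alpha$-bi-H\"older map $f$ satisfies $\dim_H(f(X))\ge\alpha\dim_H(X)$ (the injectivity produced by the lower bound makes $f^{-1}$ well-defined on $f(X)$ and $\alpha$-H\"older there), together with the classical product inequality $\dim_H(E_1\times\cdots\times E_k)\ge\sum_{j=1}^{k}\dim_H(E_j)$.

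The content is essentially an observation rather than an obstacle: the only delicate point is realizing that the two assumptions together collapse $f$ to a true product of one-variable bi-H\"older maps, after which the quantitative bound is just careful bookkeeping with $\ell^{2}$ versus $\ell^{\infty}$ norms and the concavity of $t^{\alpha}$. Matching the precise constant $\max\{k,k^{1/(2\alpha)}\}$ from the two regimes $\alpha\ge 1/2$ and $\alpha\le 1/2$ is the most fiddly bit, but not genuinely difficult.
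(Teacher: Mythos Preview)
Your proof is correct and follows essentially the same route as the paper. The paper uses the second hypothesis implicitly (in the line ``By assumption, $\|f(x)-f(x_1,\ldots,x_{j-1},x',x_{j+1},\ldots,x_k)\|=|(f(x))_j-(f(x'))_j|$''), whereas you make the product decomposition $f=(f_1(x_1),\ldots,f_k(x_k))$ explicit from the start; the only other difference is that the paper obtains the upper H\"older bound by a telescoping sum (yielding constant $Ck$) while you use the power-mean inequality on the squared components (yielding the slightly sharper $Ck^{(1-\alpha)/2}$), and both appeal to the same classical facts for the dimension inequality.
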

\begin{proof}
Up to taking $C'\geq C$, we can assume that $C\geq1$. Let $E\pe E_1\times \cdots \times E_k$. Let $x,x'\in E$, then by assumption,
\begin{eqnarray*}
\|f(x)-f(x')\| & \leq & \sum_{j=1}^k\|f(x_1',\ldots,x_j',x_{j+1},\ldots,x_k)-f(x_1',\ldots,x_{j-1}',x_j,\ldots,x_k)\|\\
& \leq & C \sum_{j=1}^k|x_j-x_j'|^\alpha\leq C.k\|x-x'\|^\alpha.
\end{eqnarray*}
Again, by hypothesis, we have
\begin{center}
$\displaystyle\|x-x'\|\leq\sqrt{k}\max_{1\leq j\leq k}|x_j-x_j'|\leq C^\alpha\sqrt{k}\max_{1\leq j\leq k}\|f(x)-f(x_1,\ldots,x_{j-1},x',x_{j+1},\ldots,x_k)\|^\alpha$.
\end{center}
By assumption, $\|f(x)-f(x_1,\ldots,x_{j-1},x',x_{j+1},\ldots,x_k)\|=|(f(x))_j-(f(x'))_j|$ and thus
$$\displaystyle\|x-x'\|\leq C^\alpha\sqrt{k}\max_{1\leq j\leq k}|(f(x))_j-(f(x'))_j|^\alpha\leq C^\alpha \sqrt{k}
\|f(x)-f(x')\|^\alpha~.$$
The Hausdorff dimension estimate is classical (see e.g. \cite{falconer}).
\end{proof}

\subsection{Embeddings of $k$-fold products of $\Mand$: proof of Theorem \ref{propmuniv}}

By assumption, for any $1\leq i\leq k$, there exists integers $p_i,k_i\geq1$ such that
\begin{center}
$f_0^{\circ(k_i+p_i)}(c_i(0))=f_0^{\circ p_i}(c_i(0))$.
\end{center}
Denote by $a_i\pe f_0^{\circ k_i}(c_i(0))$. As $a_i$ is a repelling cycle of $f_0$, by the implicit function Theorem, up to reducing $\D^m$, we may assume that $a_i$ can be followed holomorphically on the whole $\D^m$ as a $p_i$-repelling cycle $a_i(\lambda)$ of $f_\lambda$. Let us now set:
\begin{eqnarray*}
\chi:\D^m & \longrightarrow & \C^k\\
\lambda & \longmapsto & \big(f_\lambda^{\circ p_1}(c_1(\lambda))-a_1(\lambda),\ldots,f_\lambda^{\circ p_k}(c_k(\lambda))-a_k(\lambda)\big).
\end{eqnarray*}
By assumption, up to reducing $\D^m$, the map $\chi$ is a submersion onto its image $\Omega$. The map $\chi$ allows us to defined a system of coordinates $(x_1,\ldots,x_m)$ of for which $\{\chi_i=0\}=\{x_i=0\}$, so that $\{\chi=0\}=\{(0,\ldots,0)\}\times\D^{m-k}$ in a neighborhood $\Omega_1$ of $0\in\D^m$. Let us fix $R=20$, let $\delta>0$ be given by Lemma \ref{lmMcM} and let us fix $0<\epsilon<\delta$. Let us denote by $(\mathcal{H}_j)$ the following assertion:

~

\begin{flushleft}
\textit{$(\mathcal{H}_j)$: \ \ There exists $\rho_j>0$ and a continuous embedding $\Phi_j:\Mand^j\times\D_{\rho_j}^{m-j}\longrightarrow\Omega_1$, and a continuous family $\{\varphi_{\zeta,x',l}:\p^1\longrightarrow\p^1\}_{(\zeta,x')\in\Mand^j\times\D_{\rho_j}^{m-j},1\leq l\leq j}$ of $(1+O(\epsilon))$-quasi-conformal homeomorphisms for which
\begin{enumerate}
\item  For any $1\leq l\leq j$, $t\in\D_{\rho_j}^{m-j}$, $\zeta_1,\ldots,\zeta_{l-1},\zeta_{l+1},\ldots,\zeta_j\in\Mand^{j-1}$ the map
\begin{center}
$\zeta\longmapsto\Phi_j(\zeta_1,\ldots,\zeta_{l-1},\zeta,\zeta_{l+1},\ldots,\zeta_j,t)$
\end{center}
is locally $1/(1+O(\epsilon))$-bih\"older continuous. Moreover, the h\"older constants are independant of $t$, $\zeta_1,\ldots,\zeta_{l-1},\zeta_{l+1},\ldots,\zeta_j$ and 
\begin{center}
$\Phi_j(\{\zeta_1,\ldots,\zeta_{i-1}\}\times \Mand\times\{\zeta_{i+1},\ldots,\zeta_j\})\subset\{a_1,\ldots,a_{i-1}\}\times\C\times\{a_{j+1},\ldots,a_m\}$
\end{center}
for some $a_i\in \C$, $i\neq l$, depending only on $\Phi_j$, $\zeta_i$, $i\neq l$ and $t\in\D^{m-j}_{\rho_j}$.
\item $\Phi_j$ is holomorphic on $(\mathring{\Mand})^j\times\D_{\rho_j}^{m-j}$,
\item For any $\zeta\in\Mand^j$, the set $\Phi_j(\{\zeta\}\times\D_{\rho_j}^{m-j})$ is a holomorphic graph of the form 
\begin{center}
$\Phi_j(\{\zeta\}\times\D_{p_j}^{m-j})=\big\{x_1=u_1(x'),\ldots,x_j=u_j(x'), \ x'\in\D^{m-j}_{\rho_j}\big\}$.
\end{center}
\item for $1\leq l\leq j,$ there exists $n_l\geq1$ such that  $\varphi_{\zeta,t,l}\circ p_{\zeta_l}=f_{\Phi(\zeta,t)}^{\circ n_l}\circ \varphi_{\zeta,t,l}$ on $\K_{\zeta_l}$ and the conjugacy is hybrid.
\end{enumerate}}
\end{flushleft}

\par We want to prove $(\mathcal{H}_j)$ by finite induction on $j$. Then, to conclude the proof of the Theorem, it only remains to justify the assertion $(3)$ of the Theorem. Let us begin with proving $(\mathcal{H}_1)$. To this aim, let us set
\begin{center}
$\Lambda_1\pe\{\chi_2=\ldots=\chi_k=0\}\cap\{x_{k+1}=\cdots=x_m=0\}=\{x\in\Omega_1\ |\ x_2=\ldots=x_m=0\}$.
\end{center}
Since $\chi$ is a local submersion at $0$, $\chi_1\not\equiv0$ on $\Lambda_1$. By \cite[Lemma 3.1]{Article1}, the critical point $c_1$ is thus active at $0$ in $\Lambda_1$. Since $c_1(0)$ is preperiodic under iteration of $f_0$, there exists $n\geq1$ such that $f^{\circ n}_0(c_1(0))$ is a periodic point of $f_0$. Moreover, it is a repelling periodic point. Up to multiplying $n$ by the period of $f^{\circ n}_0(c_1(0))$, we also may assume that $f^{\circ 2n}_0(c_1(0))=f^{\circ n}_0(c_1(0))$, i.e. that $f_0^{\circ n}(c_1(0))$ is a repelling fixed point for $f_0^{\circ n}$. By a Theorem of McMullen (see \cite[Theorem 3.1]{McMullen3}), there exists an integer $n_1\geq n$ and a coordinate change on $\p^1$, such that in this coordinate $c_1\equiv0$ on $\Lambda_1$ and
\begin{eqnarray}
f_\lambda^{\circ n_1}(z)=z^2+\zeta+h(z,\zeta),
\label{quadraticlike}
\end{eqnarray}
whenever $z,\zeta\in\D(0,2R)$, with $\sup|h(z,\zeta)|\leq \epsilon/2$ and $\lambda=\psi_1(\zeta)\pe t_1(1+\gamma_1\zeta)\in\Lambda_1$ and $0<|t_1|,|\gamma_1|<\epsilon$. Therefore, for $x'\in\D^{m-1}$ close enough to $0'$, in the coordinate given by Theorem 3.1 of \cite{McMullen3}, the map $f_\lambda$ satisfies \eqref{quadraticlike} for $z,\zeta\in\D(0,R)$, with $\sup|h(z,\zeta)|\leq \epsilon$ and $\lambda=\psi_1(\zeta)\pe t_1(1+\gamma_1\zeta)+x'\in\Lambda_1+x'$.  This means that there exists a family of quadratic-like maps $(f_\lambda^{\circ n_1})_{\lambda\in\psi_n(\D(0,R))\times\D_{\rho_1}^{m-1}}$ for some $\rho_1>0$ parametrized by the open set $\psi_1(\D(0,R))\times\D_{\rho_1}^{m-1}$ of $\D^m$. The existence of a surjective map
\begin{center}
$\phi_1:M_{\psi_1(\D(0,R))\times\D_{\rho_1}^{m-1}}\longrightarrow \Mand$
\end{center}
follows from Theorem \ref{tmDH}. Let us now set: 
\begin{eqnarray*}
\Psi_1:M_{\psi_n(\D(0,R))\times\D_{\rho_1}^{m-1}} & \longrightarrow & \Mand\times\D_{\rho_1}^{m-1}\\
\lambda & \longmapsto & (\phi_1(\lambda),\lambda_2,\ldots,\lambda_m).
\end{eqnarray*}
By Lemma \ref{lmMcM}, the map $\Psi_1|_{M_{\psi_1(\D(0,R))+x'}}:M_{\psi_1(\D(0,R))+x'}\longrightarrow\Mand\times\{x'\}$ is an homeomorphism which is the restriction of a $(1+O(\epsilon))$-quasiconformal map, for any $x'\in\D_{\rho_1}^{m-1}$. The assertions $(1)-(4)$ of $(\mathcal{H}_1)$ are then satisfied by $\Phi_1\pe\Psi_1^{-1}$, according to Theorem \ref{tmDH}.

~

\par We now assume that for $1\leq j\leq k-1$, we have already established assertion $(\mathcal{H}_j)$. Let us consider $\zeta^{(j)}\in(\partial\Mand)^j$ be such that the critical point of $z^2+\zeta_i^{(j)}$ is preperiodic to a repelling cycle and let us set
\begin{center}
$\Lambda_{j+1}\pe\Phi_j(\{\zeta^{(j)}\}\times\D_{\rho_j}^{m-j})\cap\{x_{j+2}=\cdots=x_m=0\}$
\end{center}
and let $\lambda^{(j+1)}\in\Lambda_{j+1}\cap\{x_{j+1}=0\}$. The critical points $c_{j+2},\ldots,c_k$ are passive in the family $\Lambda_{j+1}$ and, by the assumption $(4)$ of the induction hypothesis $(\mathcal{H}_j)$, up to reordering the critical points, we can assume that the critical points $c_1,\ldots,c_j$ are passive in the family $(f_t)_{t\in\Lambda_{j+1}}$. In addition, by assumption $(3)$ of $(\mathcal{H}_j)$, the set $\Lambda_{j+1}$ is of the form
\begin{center}
$\Lambda_{j+1}=\{x_1=u_1(x'),\ldots,x_j=u_j(x'),\ x'\in\D_{\rho_j}^{m-j}\}\cap\{x_{j+1}=\cdots=x_k=0\}$.
\end{center}
Therefore, the analytic sets $\Lambda_{j+1}$ and $\{x_{j+1}=0\}$ intersect properly at $\lambda^{(j+1)}$. Therefore, by \cite[Lemma 3.1]{Article1}, the critical point $c_{j+1}$ is active at $\lambda^{(j+1)}$ in $\Lambda_{j+1}$. Using again \cite[Theorem 3.1]{McMullen3},  we find an integer $n_{j+1}\geq1$ and a coordinate change on $\p^1$, such that in this coordinate $c_{j+1}\equiv0$ on $\Lambda_{j+1}$ and $f_t^{\circ n_{j+1}}(z)=z^2+\zeta+h(z,\zeta)$, whenever $z,\zeta\in\D(0,2R)$, with $\sup|h(z,\zeta)|\leq \epsilon/2$ and $t=\psi_{j+1}(\zeta)\pe t_{j+1}(1+\gamma_{j+1}\zeta)\in\Lambda_{j+1}$ and $0<|t_{j+1}|,|\gamma_{j+1}|<\epsilon$. We then proceed as in the previous step to find $0<r\leq\rho_j$ and to build a continuous injection
\begin{center}
$\Psi_{j+1}:\D_r^j\times\Mand\times\D_r^{m-j-1}\longrightarrow\D^m(\Phi_j(\zeta^{(j)},0'),r)$
\end{center}
satisfying $(\mathcal{H}_1)$. In particular, for any $\zeta\in\Mand$, the set $\Psi_{j+1}(\D_r^j\times\{\zeta\}\times\D_r^{m-j-1})$ is a holomorphic graph of the form 
\begin{center}
$\Psi_{j+1}(\D_r^j\times\{\zeta\}\times\D_r^{m-j-1})=\big\{x_{j+1}=u_{j+1}(x',x''), \ x'\in\D^j_r,x''\in\D^{m-j-1}_r\big\}$.
\end{center}

\par We now may construct the map $\Phi_{j+1}$, using $\Phi_j$ and $\Psi_{j+1}$. By a classical result of Douady and Hubbard (see \cite{DH}, see also \cite[Theorem 4.1]{McMullen3}), there exist $(1+\epsilon)$-quasiconformal embeddings $\phi_i:\Mand\longrightarrow\Mand$ whose images are, respectively, contained in arbitrary small neighborhoods of $\zeta_i^{(j)}$. Therefore, the maps $\phi_i$ ca be chosen so that
\begin{center}
$\Phi_j\big(\phi_1(\Mand)\times\cdots\times\phi_j(\Mand)\times\D_{\rho_j}^{m-j}\big)\cap\Psi_{j+1}(\D_r^j\times\{\zeta_{j+1}\}\times\{0\})\Subset\Psi_{j+1}(\D_r^j\times\{\zeta_{j+1}\}\times\{0\})$
\end{center}
for any $\zeta_{j+1}\in\Mand$. By continuity of $\Psi_{j+1}$, we thus can find $0<\rho_{j+1}\leq r$ such that
\begin{center}
$\Phi_j\big(\phi_1(\Mand)\times\cdots\times\phi_j(\Mand)\times\D_{\rho_j}^{m-j}\big)\cap\Psi_{j+1}(\D_{\rho_{j+1}}^j\times\{\zeta\}\times\{x'\})\Subset\Psi_{j+1}(\D_{\rho_{j+1}}^j\times\{\zeta\}\times\{x'\})$,
\end{center}
for any $(\zeta_{j+1},x')\in\Mand\times\D_{\rho_{j+1}}^{m-j-1}$. The hypothesis $(3)$ of $(\mathcal{H}_j)$ guaranties that for any $\zeta_1,\ldots,\zeta_{j+1}\in\Mand$ and any $x'\in\D_{\rho_{j+1}}^{m-j-1}$, the intersection
\begin{center}
$\Phi_j\big(\{(\phi_1(\zeta_1),\ldots,\phi_j(\zeta_j))\}\times\D_{\rho_j}\times\{x'\}\big)\cap\Psi_{j+1}(\D_{\rho_{j+1}}^j\times\{\zeta\}\times\{x'\})$
\end{center}
is reduced to one point.

 We define $\Phi_{j+1}(\zeta,x')$ as this unique intersection point. The properties of $\Phi_j$ and $\Psi_{j+1}$ respectively given by $(\mathcal{H}_j)$ and $(\mathcal{H}_1)$ directly imply that the map $\Phi_{j+1}$ satisfies the assertions $(2)$, $(3)$ and $(4)$ of $(\mathcal{H}_{j+1})$. To conclude, it remains to remark that, by the regularity properties of $\Phi_j$ and $\Psi_{j+1}$, the map $\Phi_{j+1}$ obviously satisfies $(1)$.
 
~

\par We have shown that $\Phi$ exists and satisfies $(1)$, $(2)$ and $(4)$. It remains to justify the fact that $\Phi$ satisfies $(3)$. First, let us remark that assumption $(1)$ of $(\mathcal{H}_k)$ combined with Lemma \ref{lmdimH} implies that for any $t\in\D^{m-k}$, the map $\Phi(\cdot,t):\Mand^k\longrightarrow\Omega_1$ is locally $1/(1+O(\epsilon))$-bih\"older. Let now $\zeta\in(\partial\Mand)^k$ and let $\rho>0$ be such that $\Phi(\cdot,t)$ is $1/(1+O(\epsilon))$-bih\"older on $\D^k(\zeta,\rho)$. Lemma \ref{lmdimH} and \cite[Theorem A]{shishikura2} give
\begin{eqnarray*}
\dim_H\big(\Phi\big((\partial\Mand)^k\cap\D^k(\zeta,\rho),t\big)\big) & \geq & (1+O(\epsilon))\dim_H((\partial\Mand)^k\cap\D^k(\zeta,\rho))\\
& \geq & (1+O(\epsilon))\sum_{j=0}^k\dim_H((\partial\Mand)\cap\D(\zeta_j,\rho))\\
& \geq & 2k(1+O(\epsilon)).
\end{eqnarray*}
Lemma \ref{lmdimHMcMullen} and assertion $(3)$ of $(\mathcal{H}_k)$ then state that for almost every $t\in\D^{m-k}$,
\begin{eqnarray*}
\dim_H\big(\Phi\big((\partial\Mand)^k\times\D^{m-k}\big)\big) & \geq & 2(m-k)+\dim_H\big(\Phi\big((\partial\Mand)^k\times\{t\}\big)\big)\\
& \geq & 2(m-k)+2k(1+O(\epsilon))=2m-O(\epsilon),
\end{eqnarray*}
which ends the proof.

\subsection{A consequence: Theorem \ref{mainthmMcM}.}\label{sec:tm2}
\par We now prove that the homeomorphically embedded copies of $(\partial\Mand)^k\times\D^{\dim\Lambda-k}$ given by Theorem \ref{propmuniv} are contained in the support of the bifurcation currents. As a consequence, we obtain optimal Hausdorff dimension estimates for the supports of the bifurcation currents. Theorem \ref{propmuniv} combined with \cite[Theorem 6.2]{Article1} yields the following key proposition.

\begin{proposition}
Let $(f_\lambda)_{\lambda\in\Lambda}$ be a holomorphic family of degree $d$ rational maps. Assume that $c_1,\ldots,c_k$ are marked simple critical points and denote by $T_1,\ldots,T_k$ their respective bifurcation currents. Assume that $k\leq m=\dim\Lambda$ and $T_1\wedge\cdots\wedge T_k\neq0$. Then, for any $\epsilon>0$, the homeomorphic embeddings of the set $(\partial\Mand)^k\times\D^{m-k}$ given by Theorem \ref{propmuniv} are contained in $\supp(T_1\wedge\cdots\wedge T_k)$.
\label{cormuniv}
\end{proposition}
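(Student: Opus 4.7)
The plan is to apply Theorem \ref{propmuniv} and then invoke \cite[Theorem 6.2]{Article1} as the transfer step. First I would verify applicability: since $T_1\wedge\cdots\wedge T_k\neq 0$, Theorem \ref{tmduj} provides a parameter $\lambda_0\in\Lambda$ at which $c_1,\ldots,c_k$ fall transversely onto repelling cycles. Choosing a polydisc chart around $\lambda_0$ and using that the $c_i$'s are simple, Theorem \ref{propmuniv} yields the embedding $\Phi:\Mand^k\times\D^{m-k}\to\D^m$ together with the hybrid conjugacies $\varphi_{\zeta,t,i}$ and the preferred coordinate system $(x_1,\ldots,x_m)$ aligning the preperiodicity hypersurface $\{\chi_i=0\}$ with $\{x_i=0\}$.

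Next, fix an arbitrary $\lambda=\Phi(\zeta,t)$ with $\zeta=(\zeta_1,\ldots,\zeta_k)\in(\partial\Mand)^k$ and $t\in\D^{m-k}$. For each $i$, consider the slice $\Delta_i\pe\Phi(\{\zeta_1,\ldots,\zeta_{i-1}\}\times\Mand\times\{\zeta_{i+1},\ldots,\zeta_k\}\times\{t\})$. By properties $(\mathcal{H}_k)(1)$ and $(\mathcal{H}_k)(3)$ from the proof of Theorem \ref{propmuniv}, $\Delta_i$ is a topological disc sitting as a holomorphic graph over the $i$-th coordinate of the chart (its image moves only the $x_i$-coordinate, with the other $x_j$ fixed), and the map $\zeta_i\mapsto \Phi(\ldots,\zeta_i,\ldots,t)$ is a quasi-conformal parametrization of $\Delta_i$ by $\Mand$. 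The hybrid conjugacy $\varphi_{\zeta,t,i}$ intertwines $f_{\lambda}^{n_i}$ on $\K_i$ with $p_{\zeta_i}$ on $\K_{\zeta_i}$, so the failure of normality of the critical orbit of $p_{\zeta_i}$ at any $\zeta_i\in\partial\Mand$ transports to activity of $c_i$ at $\lambda$ along $\Delta_i$. The discs $\Delta_1,\ldots,\Delta_k$ are pairwise transverse at $\lambda$ thanks to the coordinate alignment.

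The third step is to feed this data — $k$ transverse holomorphic one-parameter families $\Delta_i$ through $\lambda$, each witnessing activity of a distinct marked critical point in a way controlled by a copy of $\Mand$ — into \cite[Theorem 6.2]{Article1}, which is the paper's transfer mechanism and concludes that $\lambda\in\supp(T_1\wedge\cdots\wedge T_k)$. Ranging over all $(\zeta,t)\in(\partial\Mand)^k\times\D^{m-k}$ gives the stated containment. The main obstacle I anticipate is verifying cleanly that the transverse-slice data from $\Phi$ matches the precise hypotheses of \cite[Theorem 6.2]{Article1}: one must check that each $\Delta_i$ genuinely records a bifurcation of $c_i$ with a corresponding local copy of the bifurcation measure of the quadratic family, and that the transverse coupling between the slices is compatible with forming the wedge $T_1\wedge\cdots\wedge T_k$. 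Fortunately, the coordinate choice baked into Theorem \ref{propmuniv} (so that varying $\zeta_i$ moves only the $i$-th preperiodicity coordinate) is engineered precisely to make this match transparent, reducing the remainder to bookkeeping.
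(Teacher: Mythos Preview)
Your overall architecture is right --- build $\Phi$ via Theorem \ref{propmuniv} and then transfer using \cite[Theorem 6.2]{Article1} --- but the transfer step as you set it up does not match the hypothesis of that theorem. As used throughout the paper, \cite[Theorem 6.2]{Article1} takes as input a parameter at which the marked critical points \emph{fall properly onto repelling cycles} and outputs membership in $\supp(T_1\wedge\cdots\wedge T_k)$. It is not a statement about ``activity of $c_i$ along transverse one-dimensional slices''. At a generic $\zeta\in(\partial\Mand)^k$ the hybrid conjugacy gives you only that each $c_i$ is active in the slice $\Delta_i$, not that it is preperiodic to a repelling cycle; so you have no direct way to invoke the theorem at $\Phi(\zeta,t)$. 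Your own final paragraph flags exactly this mismatch but does not close it, and in fact it cannot be closed as stated: activity of each $c_i$ along mutually transverse complex lines through $\lambda$ does not by itself force $\lambda\in\supp(T_1\wedge\cdots\wedge T_k)$.

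The paper's fix is simple and you are one step away from it: restrict to a countable dense subset of $(\partial\Mand)^k$ consisting of Misiurewicz parameters $\zeta_{\mathbf j}=(\zeta_{j_1},\ldots,\zeta_{j_k})$, i.e.\ parameters for which $0$ is strictly preperiodic to a repelling cycle under $z^2+\zeta_{j_i}$. Item (4) of Theorem \ref{propmuniv} (the hybrid conjugacy) then transports this to genuine preperiodicity of $c_i$ to a repelling cycle of $f_{\Phi(\zeta_{\mathbf j},t)}$, and the coordinate alignment you already isolated (each slice moves only the $i$-th preperiodicity coordinate $x_i$) shows the corresponding hypersurfaces intersect properly at $\Phi(\zeta_{\mathbf j},t)$. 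Now \cite[Theorem 6.2]{Article1} applies verbatim and gives $\Phi(\zeta_{\mathbf j},t)\in\supp(T_1\wedge\cdots\wedge T_k)$. Since such $(\zeta_{\mathbf j},t)$ are dense in $(\partial\Mand)^k\times\D^{m-k}$, continuity of $\Phi$ and closedness of the support finish the proof. In short: replace ``arbitrary boundary point plus activity'' by ``Misiurewicz point plus preperiodicity, then take closure''.
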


\begin{proof}
Consider a dense sequence $\zeta_j\subset\partial\Mand$ for which $0$ is preperiodic to a repelling cycle for $z^2+\zeta_j$. Since $\partial\Mand$ is the bifurcation locus of the family $(z^2+\zeta)_{\zeta\in\C}$, the existence of such a sequence is just an straight foward consequence of Montel's Theorem (see for example \cite[Lemma 2.3]{favredujardin} or \cite[Lemma 2.1]{McMullen3}). Set $\mathbf{j}\pe(j_1,\ldots,j_k)$ and $\zeta_{\mathbf{j}}\pe(\zeta_{j_1},\ldots,\zeta_{j_k})$. Let $\Phi$ be the embedding given by Theorem \ref{propmuniv}. Since the set $\{(\zeta_{\mathbf{j}},x')\in(\partial\Mand)^k\times\D^{m-k} \ | \ \mathbf{j}\in(\Z_+)^k\}$ is dense in $(\partial\Mand)^k\times\D^{m-k}$, it is sufficient to show that $\Phi(\zeta_{\mathbf{j}},x')\in\supp(T_1\wedge\cdots\wedge T_k)$ for all $\mathbf{j}\in(\mathbb{Z}_+)^k$ and all $x'\in\D^{m-k}$. By item (4) of Theorem \ref{propmuniv}, the critical points $c_1,\ldots,c_k$ fall onto repelling cycles at $\Phi(\zeta_{\mathbf{j}},x')$ for any $x'\in\D^{m-k}$. Since that $c_1,\ldots,c_k$ fall properly onto repelling cycles for any $\mathbf{j}\in(\Z_+)^k$. \cite[Theorem 6.2]{Article1} then states that $\Phi(\zeta_{\mathbf{j}},x')\in\supp(T_1\wedge\cdots\wedge T_k)$.
\end{proof}

\begin{proof}[Proof of Theorem \ref{mainthmMcM}]
This is a direct consequence of Theorem \ref{propmuniv} and Proposition \ref{cormuniv}.
\end{proof}

\subsection{Hausdorff dimension of the support of bifurcation currents.}

\par To end this section, we want to underline the fact that Theorem \ref{propmuniv}, Proposition \ref{cormuniv} and the work \cite{higher} of Dujardin directly give Hausdorff dimension estimates for the support of $T_1\wedge \cdots\wedge T_k$.

\begin{proposition}
Let $(f_\lambda)_{\lambda\in\Lambda}$ be a holomorphic family of degree $d$ rational maps. Assume that $c_1,\ldots,c_k$ are marked simple critical points and denote by $T_1,\ldots,T_k$ their respective bifurcation currents. Assume that $k\leq m=\dim\Lambda$ and $T_1\wedge\cdots\wedge T_k\neq0$. Then, for any $\epsilon>0$, the homeomorphic embeddings of the set $(\partial\Mand)^k\times\D^{m-k}$ of dimension at least $2m-\epsilon$ given by Theorem \ref{propmuniv} are dense in $\supp(T_1\wedge\cdots\wedge T_k)$.
\label{cormuniv2}
\end{proposition}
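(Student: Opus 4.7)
The plan is to combine three ingredients already in hand: Dujardin's transversality-density result (Theorem \ref{tmduj}), the universality statement of Theorem \ref{propmuniv}, and Proposition \ref{cormuniv}. Fix $\lambda_0\in\supp(T_1\wedge\cdots\wedge T_k)$, a neighborhood $U\subset\Lambda$ of $\lambda_0$, and $\epsilon>0$. By Theorem \ref{tmduj}, the set of parameters at which $c_1,\ldots,c_k$ fall \emph{transversely} onto repelling cycles is dense in $\supp(T_1\wedge\cdots\wedge T_k)$, so we may pick $\lambda_1\in U$ at which this transversality holds.

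Next, choose a holomorphic chart around $\lambda_1$ whose image is a polydisk $\D^m\subset U$ sending $\lambda_1$ to the origin. In these coordinates the family $(f_\lambda)_{\lambda\in\D^m}$ satisfies exactly the hypotheses of Theorem \ref{propmuniv}: the critical points $c_1,\ldots,c_k$ are simple and transversely preperiodic to repelling cycles at $0$. Applying Theorem \ref{propmuniv} with parameter $\epsilon$ produces a continuous embedding
\[
\Phi:\Mand^k\times\D^{m-k}\hookrightarrow\D^m\subset U
\]
together with a continuous family of $(1+O(\epsilon))$-quasiconformal conjugacies, such that $\Phi\bigl((\partial\Mand)^k\times\D^{m-k}\bigr)$ has Hausdorff dimension at least $2m-O(\epsilon)$ and such that the critical points $c_1,\ldots,c_k$ fall onto repelling cycles at every point of $\Phi\bigl((\partial\Mand)^k\times\D^{m-k}\bigr)$ in the sense of item (4) of that theorem.

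Finally, Proposition \ref{cormuniv}, applied in the chart, yields
\[
\Phi\bigl((\partial\Mand)^k\times\D^{m-k}\bigr)\subset\supp(T_1\wedge\cdots\wedge T_k).
\]
Since $\lambda_0\in\supp(T_1\wedge\cdots\wedge T_k)$ and its neighborhood $U$ were arbitrary, the family of homeomorphic copies of $(\partial\Mand)^k\times\D^{m-k}$ produced by Theorem \ref{propmuniv} is dense in $\supp(T_1\wedge\cdots\wedge T_k)$, as required.

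There is essentially no new obstacle: the only mildly delicate point is verifying that the hypotheses of Theorem \ref{propmuniv} can genuinely be arranged inside an arbitrarily small neighborhood of $\lambda_0$, but this is immediate from Theorem \ref{tmduj} together with the openness of the condition that the critical points be simple (which can be assumed at a generic transversality parameter, or follows from the standing assumption that such a $\lambda_0$ exists in $\Lambda$, as in Theorem \ref{tmdim}). The statement is thus a direct corollary of the three cited results rather than a genuinely new argument.
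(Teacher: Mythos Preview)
Your proof is correct and follows essentially the same route as the paper: pick $\lambda_0\in\supp(T_1\wedge\cdots\wedge T_k)$, use Dujardin's Theorem \ref{tmduj} (cited in the paper as \cite[Theorem 0.1]{higher}) to find a nearby transversality parameter, then apply Theorem \ref{propmuniv} and Proposition \ref{cormuniv} in a local chart. Your closing remark about simplicity of the critical points is unnecessary, since the hypothesis already assumes the marked critical points $c_1,\ldots,c_k$ are simple throughout $\Lambda$.
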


\begin{proof}
Let $\lambda_0\in\supp(T_1\wedge\cdots\wedge T_k)$ and $\epsilon>0$. By \cite[Theorem 0.1]{higher}, there exists a sequence $\lambda_n\rightarrow\lambda_0$ such that $c_1,\ldots,c_k$ fall transversely onto repelling cyles at $\lambda_n$. Let $n\geq 1$ be such that $\lambda_n\in\B(\lambda_0,\epsilon)$. Then, by Theorem \ref{propmuniv} and Proposition \ref{cormuniv}, there exists an embedding
$$\Phi:(\partial\Mand)^k\times\D^{m-k}\longrightarrow\B(\lambda_0,\epsilon)\cap\supp(T_1\wedge\cdots\wedge T_k)$$
with $\dim_H(\Phi((\partial\Mand)^k\times\D^{m-k}))\geq 2m-\epsilon$.
\end{proof}

Let $(X,d)$ be a metric space. Recall that $E\subset X$ is said to \emph{homogeneous} if $\dim_H(E\cap U)=\dim_H(E)$ for all open set $U\subset X$ with $U\cap E\neq\emptyset$. As a consequenc of Proposition \ref{cormuniv2}, we get the following.

\begin{corollary}
Let $(f_\lambda)_{\lambda\in\Lambda}$ be a holomorphic family of degree $d$ rational maps. Assume that $c_1,\ldots,c_k$ are marked simple critical points and denote by $T_1,\ldots,T_k$ their respective bifurcation currents. Then either
\begin{itemize}
\item $T_1\wedge\cdots\wedge T_k=0$, or,
\item $\supp(T_1\wedge\cdots\wedge T_k)$ is homogeneous and has maximal Hausdorff dimension $2m$.
\end{itemize}
\label{cordimbif}
\end{corollary}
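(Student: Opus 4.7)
The plan is to deduce the corollary as an immediate consequence of Proposition \ref{cormuniv2}, which does all the geometric work. Assume we are in the non-trivial case $T_1\wedge\cdots\wedge T_k\neq 0$, so $\supp(T_1\wedge\cdots\wedge T_k)$ is non-empty. Let $U\subset\Lambda$ be any open set meeting this support and fix some $\lambda_0\in U\cap\supp(T_1\wedge\cdots\wedge T_k)$. Choose $\epsilon_0>0$ small enough that $\B(\lambda_0,\epsilon_0)\Subset U$.

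For each $0<\epsilon<\epsilon_0$, Proposition \ref{cormuniv2} produces a continuous embedding
$$\Phi_\epsilon:(\partial\Mand)^k\times\D^{m-k}\hookrightarrow \B(\lambda_0,\epsilon)\cap\supp(T_1\wedge\cdots\wedge T_k)\subset U\cap\supp(T_1\wedge\cdots\wedge T_k)$$
satisfying $\dim_H(\Phi_\epsilon((\partial\Mand)^k\times\D^{m-k}))\geq 2m-\epsilon$. By monotonicity of Hausdorff dimension under inclusion, this forces
$$\dim_H\bigl(U\cap\supp(T_1\wedge\cdots\wedge T_k)\bigr)\geq 2m-\epsilon.$$
On the other hand, $U\cap\supp(T_1\wedge\cdots\wedge T_k)\subset\Lambda$ and $\dim_H\Lambda=2\dim_\C\Lambda=2m$, so the opposite inequality is automatic. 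Letting $\epsilon\to 0$ yields the equality
$$\dim_H\bigl(U\cap\supp(T_1\wedge\cdots\wedge T_k)\bigr)=2m.$$

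Taking $U=\Lambda$ shows that $\supp(T_1\wedge\cdots\wedge T_k)$ itself has Hausdorff dimension $2m$, the maximal possible value. Since the preceding equality holds for every open $U$ meeting the support, the dimension is the same inside every such $U$, which is exactly the definition of homogeneity. The main (and only) potential obstacle is making sure that Proposition \ref{cormuniv2} can be applied in arbitrarily small neighborhoods of any point of the support; but this is precisely its content, itself resting on Dujardin's density Theorem \cite[Theorem 0.1]{higher} for transverse preperiodic parameters together with the Shishikura dimension bound for $\partial\Mand$.
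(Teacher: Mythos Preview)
Your proof is correct and is precisely the deduction the paper intends: the corollary is stated there as an immediate consequence of Proposition \ref{cormuniv2}, and you have simply written out that straightforward $\epsilon\to 0$ argument together with the trivial upper bound $\dim_H\Lambda=2m$.
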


\par We are now in position to prove Theorem \ref{tmdim}.

\begin{proof}[Proof of Theorem \ref{tmdim}]
Let $k\geq1$ be such that $T_\bif^k\neq0$. Up to taking a finite branched covering of the family $(f_\lambda)_{\lambda\in\Lambda}$, we can assume that it has marked critical points $c_1,\ldots,c_{2d-2}$. If $T_i$ is the bifurcation current of the critical points $c_i$, \eqref{Demarco2} gives
\begin{eqnarray}
\supp(T_\bif^k)=\bigcup_{1\leq j_1<\cdots<j_k\leq2d-2}\supp\left(\bigwedge_{i=1}^kT_{j_i}\right).
\label{egsupp}
\end{eqnarray}
Let us now set
\begin{center}
$\mathcal{C}_{i,j}\pe\{\lambda\in\Lambda \ | \ c_j(\lambda)=c_i(\lambda)\}$
\end{center}
for $1\leq i\neq j\leq 2d-2$. By assumption, $\mathcal{C}_{i,j}$ is a complex hypersurface of $\Lambda$. Let $\Lambda_1\pe\Lambda\setminus\bigcup_{i\neq j}\mathcal{C}_{i,j}$. Then the family $(f_\lambda)_{\lambda\in\Lambda_1}$ is a family of degree $d$ rational maps with simple marked critical points. The key of the proof is the following lemma.

\begin{lemma}
Let $\B\subset\Lambda_1$ be an open ball and let
\begin{center}
$m\pe\max\{1\leq j\leq 2d-2 \ | \ T_\bif^j\neq0$ in $\B\}$.
\end{center}
Then $\B\cap\supp(T_\bif^{m-1})\setminus\supp(T_\bif^m)\neq\emptyset$.
\label{lminter}
\end{lemma}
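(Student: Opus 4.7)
The plan is to produce $\lambda^*\in\B$ lying in $\supp(T_\bif^{m-1})$ but avoiding every $\supp(T_I)$ with $|I|=m$, by combining the Mandelbrot universality theorem (Theorem~\ref{propmuniv}) with the maximality hypothesis $T_\bif^{m+1}=0$ on $\B$.

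From \eqref{Demarco2} together with $T_\bif^m\neq 0$ in $\B$, first fix $I_0=\{j_1,\ldots,j_m\}$ with $T_{j_1}\wedge\cdots\wedge T_{j_m}\neq 0$ in $\B$. Theorem~\ref{tmduj} supplies $\lambda_0\in\B$ where $c_{j_1},\ldots,c_{j_m}$ fall transversely onto repelling cycles, and Theorem~\ref{propmuniv}, after shrinking $\B$ to a ball around $\lambda_0$, yields a homeomorphic embedding $\Phi\colon\Mand^m\times\D^{m'-m}\hookrightarrow\B$ with $m':=\dim_\C\Lambda$. Then choose Misiurewicz parameters $\zeta_1,\ldots,\zeta_{m-1}\in\partial\Mand$ (dense by Corollary~\ref{density}) and $\zeta_m^\star$ in a hyperbolic component of $\mathring{\Mand}$, and set $\lambda^*:=\Phi(\zeta_1,\ldots,\zeta_{m-1},\zeta_m^\star,x')$ for any $x'\in\D^{m'-m}$.

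Item (4) of Theorem~\ref{propmuniv} with the Misiurewicz conditions on $\zeta_1,\ldots,\zeta_{m-1}$ makes $c_{j_1},\ldots,c_{j_{m-1}}$ preperiodic to repelling cycles at $\lambda^*$, and assertion (1) of the inductive hypothesis $(\mathcal{H}_k)$ in the proof of Theorem~\ref{propmuniv} ensures they fall properly onto them; \cite[Theorem~6.2]{Article1} then gives $\lambda^*\in\supp(T_{j_1}\wedge\cdots\wedge T_{j_{m-1}})\subset\supp(T_\bif^{m-1})$. It remains to show $\lambda^*\notin\supp(T_I)$ for every size-$m$ subset $I\subset\{1,\ldots,2d-2\}$. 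When $j_m\in I$, the hybrid conjugacy gives $c_{j_m}$ a persistent attracting cycle on a whole neighborhood of $\lambda^*$, so $c_{j_m}$ is passive there and $\lambda^*\notin\supp(T_{j_m})\supset\supp(T_I)$.

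The essential case is $j_m\notin I$, which I treat by contradiction. Pick $i_0\in I\setminus\{j_1,\ldots,j_{m-1}\}$ (non-empty since $|I|=m$), so that $i_0\notin\{j_1,\ldots,j_m\}$. Assuming $\lambda^*\in\supp(T_I)\subset\supp(T_{i_0})$ forces $c_{i_0}$ to be active at $\lambda^*$, and Theorem~\ref{tmduj} provides $\mu_n\to\lambda^*$ at which $(c_i)_{i\in I}$ fall transversely onto repelling cycles. Exploiting the Mandelbrot-like activity of $c_{j_m}$ encoded in the $\zeta_m$-coordinate of $\Phi$ (where Misiurewicz values for $c_{j_m}$ form a dense subset of $\partial\Mand$), one constructs $\nu\in\B$ at which the $m+1$ critical points $c_{j_1},\ldots,c_{j_m},c_{i_0}$ fall properly onto repelling cycles; \cite[Theorem~6.2]{Article1} then yields $\nu\in\supp(T_{j_1}\wedge\cdots\wedge T_{j_m}\wedge T_{i_0})$, so that $T_\bif^{m+1}\neq 0$ in $\B$, contradicting the maximality of $m$. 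The hardest step is this last construction: one must simultaneously maintain the transverse preperiodicity of $(c_i)_{i\in I}$ inherited from Dujardin and impose a new preperiodicity condition on $c_{j_m}$ coming from the $\zeta_m$-Mandelbrot structure, all while ensuring the $(m+1)$ preperiodicity hypersurfaces meet with the correct codimension\textemdash an analytic-continuation and dimension-count argument inside the copy $\Phi$ compatible with the proper-intersection hypothesis of \cite[Theorem~6.2]{Article1}.
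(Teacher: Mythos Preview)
Your approach has a genuine gap in the case $j_m\notin I$. The point $\lambda^*$ is built so as to control only the critical points $c_{j_1},\ldots,c_{j_m}$: the first $m-1$ are Misiurewicz and $c_{j_m}$ is attracted. But a size-$m$ subset $I$ with $j_m\notin I$ may consist partly (or entirely) of indices outside $\{j_1,\ldots,j_m\}$, over which the embedding $\Phi$ exerts no control whatsoever; there is no a priori reason $\lambda^*$ should avoid $\supp(T_I)$. Your proposed contradiction requires producing $\nu\in\B$ at which the $m+1$ critical points $c_{j_1},\ldots,c_{j_m},c_{i_0}$ fall properly onto repelling cycles. However, on a full neighborhood of $\lambda^*$ the critical point $c_{j_m}$ is \emph{passive} (it lies in an attracting basin), so to activate it you must move the $\zeta_m$-coordinate out to $\partial\Mand$; doing so moves you along a direction that has nothing to do with the hypersurfaces governing $c_{i_0}$, and you lose the information $\mu_n\in\supp(T_I)$ that Dujardin's theorem provided. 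There is no mechanism tying the transverse preperiodicity of $(c_i)_{i\in I}$ at the $\mu_n$ to any activity of $c_{j_m}$, and the ``analytic-continuation and dimension-count argument'' you invoke is precisely the missing content. The step you flag as the hardest is in fact the entire difficulty, and it is not clear it can be carried out.

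The paper's proof avoids this problem by a much simpler iterative device, using neither Theorem~\ref{propmuniv} nor a single candidate point. Starting from a transverse preperiodicity point $\lambda_1$ for $c_{j_1},\ldots,c_{j_m}$, one observes (via \cite[Lemma 3.1]{Article1}) that $c_{j_m}$ is active at $\lambda_1$ in the codimension-$(m-1)$ variety $X_{j_1}\cap\cdots\cap X_{j_{m-1}}$, and Montel's theorem yields a nearby $\lambda_2$ on that variety where $c_{j_m}$ is \emph{periodic}. On a small ball $\B_1$ around $\lambda_2$ one then has $T_{j_m}=0$ while $T_{j_1}\wedge\cdots\wedge T_{j_{m-1}}\neq0$. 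If $T_\bif^m$ is still nonzero on $\B_1$, it must come from some new index $j_m'\neq j_m$, and one repeats. After at most $2d-1-m$ iterations one reaches a ball on which at most $m-1$ critical points can be active, so $T_\bif^m=0$ there automatically while $T_\bif^{m-1}\neq0$. This pigeonhole-plus-Montel argument handles all $I$ at once, rather than trying to exclude each $\supp(T_I)$ from a fixed $\lambda^*$.
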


To finish the proof of Theorem \ref{tmdim}, it suffices to show that $\Lambda_1\cap\supp(T_\bif^k)\setminus\supp(T_\bif^{k+1})\neq\emptyset$ and then to apply Corollary \ref{cordimbif} in any ball $\B\subset\Lambda_1$ such that $\B\cap\supp(T_\bif^k)\subset\supp(T_\bif^k)\setminus\supp(T_\bif^{k+1})$.
\par By Lemma \ref{lminter}, if $m=\max\{j\leq 2d-2$ / $T_\bif^j\neq0$ on $\Lambda_1\}$, there exists $\lambda_0\in\supp(T_\bif^{m-1})\setminus\supp(T_\bif^m)$. If $\B_1\subset\Lambda_1$ is a small enough ball centered at $\lambda_1$, one has $\supp(T_\bif^m)\cap\B_1=\emptyset$, then applying again Lemma \ref{lminter}, we find $\lambda_1\in\B_1\cap\supp(T_\bif^{m-2})\setminus\supp(T_\bif^{m-1})$. In $m-k+1$ steps, we find $\lambda_{m-k+1}\in\supp(T_\bif^k)\setminus\supp(T_\bif^{k+1})$.
\end{proof}

\begin{proof}[Proof of Lemma \ref{lminter}]
This is a consequence of \cite[Theorem 0.1]{higher}. Let $\lambda_0\in\supp(T_\bif^m)\cap\B$, then by \eqref{egsupp}, there exists $1\leq j_1<\cdots<c_{j_m}\leq 2d-2$ such that $\lambda_0\in\supp(T_{j_1}\wedge\cdots\wedge T_{j_m})$. By Theorem \ref{tmduj}, there exists $\lambda_1\in\B$ such that $c_{j_1},\ldots,c_{j_m}$ fall transversely onto repelling cycles (see Definition \ref{deftransvers}). Let now $n_i,k_i\geq1$ be such that
\begin{center}
$\lambda_1\in X_i\pe\{\lambda\in\B \ | \ f_\lambda^{\circ n_i}(c_{j_i}(\lambda))=f_\lambda^{\circ (n_i+k_i)}(c_{j_i}(\lambda))$ and $f_\lambda^{\circ n_i}(c_{j_i}(\lambda))$ is repelling$\}$
\end{center}
for any $1\leq i\leq m$. By \cite[Lemma 3.1]{Article1}, the critical point $c_{j_m}$ is active at $\lambda_1$ in $X_{j_1}\cap\cdots\cap X_{j_{m-1}}$. By Montel's Theorem, there exists $\lambda_2\in X_{j_1}\cap\cdots\cap X_{j_{m-1}}$ such that $c_{j_m}(\lambda_2)$ is a periodic point of $f_{\lambda_2}$. Therefore, there exists $\B_1\Subset\B$ a ball centered at $\lambda_2$ such that $c_{j_m}$ is passive on $\B_1$ and $T_{j_1}\wedge\cdots\wedge T_{j_{m-1}}\neq0$ on $\B_1$.
\par Assume now that $T_\bif^m\neq0$ on $\B_1$. By the same procedure, we can find $j_m'\neq j_m$ and a ball $\B_2\Subset\B_1$ such that $c_{j_m'}$ is passive on $\B_2$ and $T_\bif^{m-1}\neq0$ on $\B_2$. In finitely many steps, we find a ball $\B'\Subset\B$ with 
\begin{enumerate}
\item $2d-2-m+1$ critical points are passive on $\B'$,
\item $T_\bif^{m-1}\neq0$ on $\B'$, i.e. $\supp(T_\bif^{m-1})\cap\B'\neq\emptyset$.
\end{enumerate}
Since item (1) gives $\supp(T_\bif^{m-1})\cap\B'\subset\supp(T_\bif^{m-1})\setminus\supp(T_\bif^m)$, the proof is complete.
\end{proof}

\section{Higher bifurcation currents and neutral cycles.}

\par One of the interesting facts provided by the work \cite{MSS} of Ma\~n\'e, Sad and Sullivan and the work \cite{Lyubich} of Lyubich is the existing link between the existence of a non-persitent neutral cycle and the non-persistent preperiodicity of a critical point. Namely, they show that in any holomorphic family $(f_\lambda)_{\lambda\in\Lambda}$ of degree $d$ rational maps, the closure in $\Lambda$ of the set of parameters $\lambda_0$ for which $f_{\lambda_0}$ possesses a non-persistent neutral cycle coincides with the closure in $\Lambda$ of the set of parameters $\lambda_0$ for which one critical point of $f_{\lambda_0}$ is non-persistently preperiodic to a repelling cycle. In this section, we want to establish an analogous result for higher bifurcation loci.

\subsection{In the space $\rat_d^{cm}$ of critically marked degree $d$ rational maps.}\label{sectionratd}

\par We refer to \cite[Section 1.2]{buffepstein} for a description of the set $\rat_d^{cm}$ of critically marked rational maps. The space $\rat_d^{cm}$ is a quasiprojective variety of dimension $2d+1$, which is an algebraic finite branched cover of $\rat_d$. The degree of the natural projection $\pi:\rat_d^{cm}\longrightarrow\rat_d$ depends only on $d$. Moreover, there exists $2d-2$ holomorphic maps $c_1,\ldots,c_{2d-2}:\rat_d^{cm}\longrightarrow\p^1$ such that $C(f)=\{c_1(f),\ldots,c_{2d-2}(f)\}$, where the critical points are counted with multiplicity. Recall that a holomorphic family of rational maps is said \emph{algebraic} if its parameter space is an algebraic variety, and that it is said \emph{stable} if its bifurcation locus is empty. In what follows, we will need the following lemma (see \cite[Lemma 2.1]{McMullen4}).

\begin{lemma}[McMullen]
Any stable algebraic family of degree $d$ rational maps is either trivial or all its members are postcritically finite.
\label{lmalgebraic}
\end{lemma}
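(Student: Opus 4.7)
My plan is to exploit the tension between two features of the hypothesis: stability forces multipliers of periodic cycles to remain in a fixed region of $\C$, while algebraicity rigidifies bounded regular functions on quasi-projective varieties. The conclusion will be that either all non-repelling multipliers are constant and the family is isotrivial, or every critical point is forced to be persistently preperiodic.

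I would first pass to a finite branched covering of $\Lambda$, which remains algebraic, so that the critical points $c_1,\ldots,c_{2d-2}$ and the marked periodic cycles of each period can be followed holomorphically. By the Ma\~n\'e-Sad-Sullivan characterization, stability is equivalent to passivity of every $c_i$ and equivalently forbids any marked cycle from crossing the unit circle in modulus. Since the multiplier $\rho(\lambda)$ of any marked cycle of period $n$ is a root of $p_n(\lambda,w)$ (Theorem \ref{tmdefPern}), it is a regular function on the parameter space. When $|\rho|\leq 1$ on $\Lambda$, this bounded regular function extends to a regular function on a smooth projective completion and, by the maximum principle, is constant on each irreducible component. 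Consequently, in any stable algebraic family, the multiplier of every attracting or indifferent cycle is a constant of $\Lambda$.

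Now assume the family is not trivial; I will conclude that every $f_\lambda$ is postcritically finite. Fix $\lambda_0\in\Lambda$ and a critical point $c_j$, and suppose its forward orbit is infinite at $\lambda_0$. By passivity and the Sullivan classification of Fatou components, the orbit of $c_j(\lambda_0)$ lies in the basin of, or rotates around, a non-repelling cycle whose multiplier is the fixed constant $\rho_0$ from the previous step. Standard local normal forms (B\"ottcher, K{\oe}nigs, or linearization according to $\rho_0$) conjugate the dynamics on the associated Fatou component to a model independent of $\lambda$; combined with the Ma\~n\'e-Sad-Sullivan holomorphic motion of $\J_\lambda$, this produces a global M\"obius conjugacy between $f_{\lambda_0}$ and $f_\lambda$, contradicting non-triviality. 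Therefore every critical orbit is finite for every $\lambda$, and each $f_\lambda$ is postcritically finite.

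The main obstacle I anticipate is this last step, namely assembling the local conjugacies on Fatou components together with the holomorphic motion of $\J_\lambda$ into a genuine global M\"obius conjugacy of $f_\lambda$ with $f_{\lambda_0}$. The cleanest route is to invoke McMullen's rigidity theorem on the multiplier spectrum, which states that non-Latt\`es rational maps with identical multiplier spectra are conjugate up to finite ambiguity, and to handle the Latt\`es case separately, where the conclusion holds trivially since Latt\`es maps are themselves postcritically finite.
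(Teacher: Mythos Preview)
The paper does not actually prove this lemma: it is quoted verbatim from McMullen and accompanied only by the citation ``see \cite[Lemma 2.1]{McMullen4}''. So there is no in-paper argument to compare your sketch against; the relevant benchmark is McMullen's original proof.

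Your overall architecture is the right one and coincides with McMullen's: pass to a cover to mark cycles, observe that stability keeps each marked multiplier on one side of the unit circle, use boundedness on a quasi-projective base to force these algebraic functions to be constant, and then appeal to rigidity of the multiplier spectrum (handling Latt\`es maps separately, which are already postcritically finite). Two points deserve correction, though. First, your boundedness argument as written only pins down the \emph{non-repelling} multipliers, whereas McMullen's spectral rigidity requires the full spectrum; the fix is immediate---for a persistently repelling cycle the reciprocal $1/\rho$ is a nowhere-vanishing regular function of modulus $<1$, hence constant---but you should say it. Second, your intermediate attempt to manufacture a global M\"obius conjugacy by gluing B\"ottcher/K{\oe}nigs/linearization charts to the holomorphic motion of $\J_\lambda$ does not work as stated: those ingredients yield at best a quasiconformal conjugacy, and promoting it to a M\"obius map is precisely the hard content you are trying to avoid. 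You correctly flag this as the obstacle and then fall back on the multiplier-spectrum rigidity theorem, which is exactly what McMullen does; I would simply delete the detour and go straight from ``all multipliers are constant'' to ``non-Latt\`es maps with identical spectra are conjugate up to finite ambiguity, hence the family is isotrivial''.
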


\par Recall that for $n\geq1$ and $w\in\C\setminus\{1\}$, we denoted by $\Per_n(w)$ the set of all rational maps having a cycle of multiplier $w$ and exact period $n$ (see section \ref{sectionPern}). In the quasiprojective variety $\rat_d^{cm}$, the set $\Per_n(w)$ is an algebraic hypersurface. Let $k\geq2$, for $\Theta_k\pe(\theta_1,\ldots,\theta_k)\in(\R\setminus\Z)^k$ and $N_k\pe(n_1,\ldots,n_k)\in(\Z_+)^k$ we define the set $\Per^k_{N_k}(\Theta_k)$ as 
\begin{center}
$\Per^k_{N_k}(\Theta_k)\pe\{f\in\rat_d^{cm} \ | \ f$  has $k$ distinct neutral cycles of respective multipliers $e^{2i\pi\theta_1},\ldots,e^{2i\pi\theta_k}$ and respective period $n_1,\ldots, n_k\}$. 
\end{center}
The set $\Per^k_{N_k}(\Theta_k)$ is a subvariety of $\bigcap_{1\leq j\leq k}\Per_{n_j}(e^{2i\pi\theta_j})$.

~

\begin{lemma}
Let $k\geq2$, $\Theta_k=(\theta_1,\ldots,\theta_k)\in(\R\setminus\Z)^k$ and $N_k=(n_1,\ldots,n_k)\in(\Z_+)^k$. If $\Per_{N_k}^k(\Theta_k)\neq\emptyset$, then any irreducible component of the algebraic set $\Per_{N_k}^k(\Theta_k)$ has codimension $k$ in $\rat_d^{cm}$.
\label{lemmaneutral}
\end{lemma}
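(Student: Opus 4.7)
The plan is to prove $\mathrm{codim}(Z)\leq k$ and $\mathrm{codim}(Z)\geq k$ separately, with the lower bound being the substantive direction.

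\emph{Upper bound.} Each $\Per_{n_j}(e^{2i\pi\theta_j})$ is an algebraic hypersurface of $\rat_d^{cm}$ by Theorem~\ref{tmdefPern}, and
$$\Per^k_{N_k}(\Theta_k)\subset\bigcap_{j=1}^{k}\Per_{n_j}(e^{2i\pi\theta_j}).$$
Krull's Hauptidealsatz, applied in local affine charts, together with the fact that ``the $k$ cycles are distinct'' is an open condition that does not change dimensions of irreducible components, gives $\mathrm{codim}(Z)\leq k$.

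\emph{Lower bound.} I argue by contradiction: suppose $\mathrm{codim}(Z)<k$. Since $\Per^k_{N_k}(\Theta_k)\neq\emptyset$ produces a map with $k$ distinct non-repelling cycles, Shishikura's bound forces $k\leq 2d-2$, so $\dim Z\geq 2d+2-k\geq 4$. Pick a connected component $Z'$ of the smooth locus $Z^{\mathrm{reg}}$ and consider the restricted family $(f_\lambda)_{\lambda\in Z'}$. By the Ma\~n\'e-Sad-Sullivan theorem, the $J$-stable locus $U\subset Z'$ is open and dense in $Z'$, so $U$ is a non-empty quasi-projective algebraic family of dimension $\dim Z\geq 4$.

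Apply Lemma~\ref{lmalgebraic} to $U$: either $(f_\lambda)_{\lambda\in U}$ is trivial or all its members are postcritically finite. If trivial, $U$ lies in a single $\mathrm{PSL}_2(\C)$-conjugacy class, whose dimension is at most $3$, contradicting $\dim U\geq 4$. Otherwise, each $f_\lambda$, $\lambda\in U\subset Z$, is postcritically finite while still carrying $k\geq 2$ neutral cycles of multipliers $e^{2i\pi\theta_j}\neq 1$ on the unit circle: a parabolic cycle (rational $\theta_j$) attracts a critical point into its basin by Fatou's theorem, while a Siegel or Cremer cycle (irrational $\theta_j$) forces some critical orbit to accumulate on the rotation-domain boundary or on the cycle itself. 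In every case, some critical orbit is infinite, contradicting postcritical finiteness. Both branches of McMullen's dichotomy fail, and the lower bound $\mathrm{codim}(Z)\geq k$ follows.

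The main obstacle is justifying that the stable locus $U$ is truly eligible for Lemma~\ref{lmalgebraic} as an algebraic family (it is, being Zariski-open in a quasi-projective variety) and packaging uniformly the three flavors of non-zero-multiplier neutral cycles---parabolic, Siegel, and Cremer---into the single statement ``PCF forbids neutral cycles with $\rho\neq 0$''. Once these are in place, the dimension bound $\dim Z\geq 4$ extracted from Fatou-Shishikura makes McMullen's dichotomy strong enough to yield both contradictions.
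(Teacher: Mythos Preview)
Your argument has a genuine gap precisely at the point you flag as the main obstacle. The $J$-stable locus $U\subset Z'$ is open and dense in the \emph{Euclidean} topology, but it is not Zariski-open, so it is not a quasi-projective variety and Lemma~\ref{lmalgebraic} does not apply to it. Already in the quadratic family $(z^2+c)_{c\in\C}$ the stable locus is $\C\setminus\partial\Mand$, which is not the complement of an algebraic set; and the interior of the main cardioid is a stable, non-trivial holomorphic family none of whose members is postcritically finite, showing that McMullen's dichotomy genuinely fails without algebraicity. Thus neither branch of your contradiction is reached when $Z'$ has non-empty bifurcation locus.

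The paper circumvents this by applying Lemma~\ref{lmalgebraic} only to the irreducible component $\Gamma$ itself, which \emph{is} algebraic. If $\Gamma$ happens to be stable, McMullen's dichotomy yields the contradiction exactly as you describe. The substantive case is when $\Gamma$ is unstable: then, instead of passing to the stable locus, one uses the bifurcation to locate $f_1\in\Gamma$ with an additional non-persistent neutral cycle of some new multiplier $e^{2i\pi\theta_{k+1}}$, and intersecting with $\Per_{n_{k+1}}(e^{2i\pi\theta_{k+1}})$ produces a component of $\Per^{k+1}_{N_{k+1}}(\Theta_{k+1})$ of codimension strictly less than $k+1$. Iterating, one reaches $k=2d-2$, where the Fatou--Shishikura inequality forces $\Gamma$ itself to be stable and your PCF contradiction (which is correct) finishes the job. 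In short, the induction on $k$ is what replaces the illegitimate restriction to the stable locus.
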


\begin{proof}
Let $\Gamma$ be an irreducible component of $\Per_{N_k}^k(\Theta_k)$. Let us first treat the case $k=2d-2$. If $\textup{codim}\ \Gamma<2d-2$, the family $\Gamma$ is a non-trivial algebraic family of rational maps, since $\dim\Gamma>3$ and it is stable, by the Fatou-Shishikura inequality. Lemma \ref{lmalgebraic} asserts that the family $\Gamma$ is a family of postcritically finite rational maps. This is impossible, since postcritically finite rational maps only have repelling or attracting cycles. This implies that $\textup{codim}\ \Gamma=2d-2$.
\par Assume now that $k<2d-2$. Then the family $\Gamma$ is not stable. Indeed, if we assume that $\Gamma$ is stable, Lemma \ref{lmalgebraic} again implies that $\Gamma$ is a family of postcritically finie rational maps. Therefore, there exist $\theta_{k+1}\in\R\setminus\Z\cup\{\theta_1,\ldots,\theta_k\}$, an integer $n_{k+1}$ and a map $f_1\in\Gamma\cap\Per_{n_{k+1}}(e^{2i\pi\theta_{k+1}})$. We thus reduce to proving that any irreducible component of $\Per^{k+1}_{N_{k+1}}(\Theta_{k+1})$ has codimension $k+1$, which in finitely many steps boils down to the case $k=2d-2$.
\end{proof}

\par Let $1\leq k\leq 2d-2$. For $\Theta_k=(\theta_1,\ldots,\theta_k)\in(\R\setminus\Z)^k$, recall that we have set
\begin{center}
$\mathcal{Z}_k(\Theta_k)=\displaystyle\bigcup_{N_k\in(\Z_+)^k}\Per_{N_k}^k(\Theta_k)$.
\end{center}
Recall also that we denoted by $\textup{Prerep}(k)$ the set of rational maps having $k$ prerepelling critical points. We still denote by $T_\bif^k$ the $k$-th bifurcation current of the family $\rat_d^{cm}$ which may be defined by 
$$T_\bif^k\pe \pi^*\left((dd^cL)^k\right)=\left(dd^c(L\circ \pi)\right)^k~.$$
Our main result of the present section may be stated as follows:

~

\begin{theorem}
Let $1\leq k\leq 2d-2$ and let $\Theta_k=(\theta_1,\ldots,\theta_k)\in (\R\setminus\Z)^k$. Then in $\rat_d^{cm}$
\begin{center}
$\supp(T_\bif^k)=\displaystyle\overline{\mathcal{Z}_k(\Theta_k)}=\overline{\pi^{-1}(\textup{Prerep}(k))}$,
\end{center}
\label{MSSk}
\end{theorem}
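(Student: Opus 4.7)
The second equality $\supp(T_\bif^k) = \overline{\pi^{-1}(\textup{Prerep}(k))}$ is Dujardin's Corollary 5.3 of \cite{higher}, applied after the decomposition \eqref{Demarco2} and combined with Theorem \ref{tmduj}, so the new content is the first equality $\supp(T_\bif^k) = \overline{\mathcal{Z}_k(\Theta_k)}$, which I establish by double inclusion.

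\textbf{For }$\supp(T_\bif^k) \subset \overline{\mathcal{Z}_k(\Theta_k)}$, fix $f_0 \in \supp(T_\bif^k)$ and $\varepsilon > 0$. Using \eqref{Demarco2}, choose indices $1 \leq i_1 < \cdots < i_k \leq 2d-2$ with $f_0 \in \supp(T_{i_1} \wedge \cdots \wedge T_{i_k})$. Since the locus of parameters with all critical points simple is open and dense in $\rat_d^{cm}$, Theorem \ref{tmduj} produces a parameter $f_1$ in the $\varepsilon$-ball of $f_0$ at which the chosen critical points are simple and fall transversely onto repelling cycles. Theorem \ref{mainthmMcM} now applies in a small polydisk chart $\D^m$ around $f_1$ and yields a continuous embedding $\Phi : \Mand^k \times \D^{m-k} \hookrightarrow \D^m$ together with integers $n_j \geq 1$, such that for each $(\zeta,t)$ the map $f_{\Phi(\zeta,t)}^{\circ n_j}$ is hybrid conjugate to $p_{\zeta_j}$ on disjoint compacts $\K_j$. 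Corollary \ref{density} allows me to pick $\zeta_j^\star \in \partial\Mand$ so that $p_{\zeta_j^\star}$ has a cycle of multiplier $e^{2i\pi\theta_j}$; because the hybrid conjugacy is holomorphic in a neighborhood of any non-repelling cycle, its image gives a cycle of $f_\lambda^{\circ n_j}|_{\K_j}$ (and generically of $f_\lambda$ itself, with period $n_j p_j$) of the same multiplier. The $k$ cycles so produced lie in pairwise disjoint $\K_j$'s, hence are distinct, so $\Phi(\zeta^\star, t) \in \mathcal{Z}_k(\Theta_k)$. Shrinking $\D^m$ makes this point arbitrarily close to $f_1$, hence to $f_0$.

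\textbf{For }$\overline{\mathcal{Z}_k(\Theta_k)} \subset \supp(T_\bif^k)$, fix $f_0 \in \mathcal{Z}_k(\Theta_k)$ with neutral cycles of periods $N_k = (n_1,\ldots,n_k)$ and multipliers $e^{2i\pi\theta_j}$, and let $\Gamma$ be the irreducible component of $\Per^k_{N_k}(\Theta_k)$ through $f_0$. Lemma \ref{lemmaneutral} gives $\textup{codim}_{\rat_d^{cm}} \Gamma = k$, so the hypersurfaces $\Per_{n_j}(e^{2i\pi\theta_j})$ meet transversely at generic smooth points of $\Gamma$. By the Fatou--Shishikura inequality the $k$ neutral cycles are paired with $k$ distinct critical points $c_{i_j}$ (each in the basin / Siegel disk / Cremer-boundary of its cycle), and the classical Ma\~n\'e--Sad--Sullivan--Lyubich theory places each hypersurface $\Per_{n_j}(e^{2i\pi\theta_j})$ inside the activity locus $\supp(T_{i_j})$ (the multiplier $e^{2i\pi\theta_j}$ is non-persistent because $\Per_{n_j}(e^{2i\pi\theta_j}) \neq \rat_d^{cm}$). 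The transversality of the $k$ hypersurfaces at $f_0$ then forces the $k$ activities to be independent: perturbing $f_0$ transversely to $\Per_{n_j}(e^{2i\pi\theta_j})$ pushes the multiplier off the unit circle, and by Montel one can approximate $f_0$ (inside a small ball) by parameters where the $k$ critical points $c_{i_j}$ fall transversely onto repelling cycles. By Theorem \ref{tmduj}, such approximants lie in $\supp(T_{i_1} \wedge \cdots \wedge T_{i_k}) \subset \supp(T_\bif^k)$, and closedness of the support yields $f_0 \in \supp(T_\bif^k)$.

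\textbf{Main obstacle.} The most delicate point is in the $\supset$ inclusion: ensuring that the multiplier of the constructed $f_\lambda$-cycle is \emph{exactly} $e^{2i\pi\theta_j}$, not a proper root thereof. This requires matching the period of the chosen $p_{\zeta_j^\star}$-cycle against its effective period as an $f_\lambda$-cycle and is handled using the flexibility of Corollary \ref{density} (density of parameters in $\partial\Mand$ with cycles of any prescribed multiplier and various periods). A secondary subtlety, in the $\subset$ inclusion, is the passage from "each $c_{i_j}$ is individually active at $f_0$" to the wedge $T_{i_1} \wedge \cdots \wedge T_{i_k}$ being nonzero at $f_0$; this is precisely what the transversality afforded by Lemma \ref{lemmaneutral} and Theorem \ref{tmduj} is designed to yield.
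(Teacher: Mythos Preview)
Your argument for $\supp(T_\bif^k)\subset\overline{\mathcal{Z}_k(\Theta_k)}$ is essentially the paper's: approximate by a transversely prerepelling parameter (Theorem \ref{tmduj}), embed $\Mand^k$ via Theorem \ref{mainthmMcM}/\ref{propmuniv}, then use Corollary \ref{density} and the fact that a hybrid conjugacy preserves multipliers of non-repelling cycles (the paper cites \cite[Lemma 3]{buffhenriksen} here). Your worry about ``proper roots'' of the multiplier is a non-issue for density: the multiplier is the derivative of the first-return map, which is invariant under the conjugacy; the only possible degeneracy is a drop in exact period, and this is avoided for all but countably many choices of $\zeta_j^\star$.

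The gap is in your inclusion $\overline{\mathcal{Z}_k(\Theta_k)}\subset\supp(T_\bif^k)$. Two steps are not justified. First, the Fatou--Shishikura pairing associates to each neutral cycle \emph{some} critical point, but it does not tell you that this particular $c_{i_j}$ is active along $\Per_{n_j}(e^{2i\pi\theta_j})$; Ma\~n\'e--Sad--Sullivan only says \emph{some} critical point is active there, and the labelling can jump as you move along the hypersurface. So the containment $\Per_{n_j}(e^{2i\pi\theta_j})\subset\supp(T_{i_j})$ is not available. Second, even granting that each $c_{i_j}$ is individually active at $f_0$, the sentence ``by Montel one can approximate $f_0$ by parameters where the $k$ critical points $c_{i_j}$ fall transversely onto repelling cycles'' is exactly the hard step: Montel is a one-critical-point tool, and producing $k$ simultaneous prerepelling relations with controlled codimension is not a single application of it. Lemma \ref{lemmaneutral} gives only that the intersection is \emph{proper} (codimension $k$), not that the $\Per_{n_j}$ meet transversely at $f_0$ itself, so you cannot simply invoke Theorem \ref{tmduj} in one stroke.

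The paper handles this inclusion by an inductive trade-off that avoids both issues. Starting from $f_0\in\Per^k_{N_k}(\Theta_k)$, Lemma \ref{lemmaneutral} guarantees the $k$-th neutral cycle is non-persistent in the codimension-$(k-1)$ family $\Per^{k-1}_{N_{k-1}}(\Theta_{k-1})$; MSS then gives bifurcation there, and Montel produces a nearby $f_1$ in that family with \emph{one} (unspecified) critical point prerepelling. One checks the new intersection still has codimension $k$, and repeats: at step $j$ one works in the family cut out by $j$ prerepelling conditions and $k-j-1$ neutral ones, perturbs off the remaining neutral hypersurface, and gains one more prerepelling critical point. After $k$ steps one has $f_k$ arbitrarily close to $f_0$ with $k$ critical points falling \emph{properly} onto repelling cycles, and then \cite[Theorem 6.2]{Article1} (proper, not transverse, suffices) gives $f_k\in\supp(T_\bif^k)$. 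No identification of which critical point is active is ever needed.
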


\begin{proof}
By \cite[Theorem 1]{higher}, we already know that $\overline{\pi^{-1}\textup{Prerep}(k)}=\supp(T_\bif^k)$. The first step of the proof consists in showing that $\mathcal{Z}_k(\Theta_k)$ is not empty and $\supp(T_\bif^k)\subset\overline{\mathcal{Z}_k(\Theta_k)}$. After that, we show that, when $\Per_{N_k}^k(\Theta_k)\neq \emptyset$, it is contained in $\supp(T_\bif^k)$.

~

\par Since $T_\bif^{2d-2}\neq0$, the current $T_{j_1}\wedge\cdots\wedge T_{j_k}$ is non-zero for any $j_1<\cdots<j_k$ and Proposition \ref{cormuniv} implies that there exists a family of homeomorphic embeddings
\begin{center}
$\Phi_n:(\partial\Mand)^k\times\D^{2d+1-k}\longrightarrow\supp(T_{j_1}\wedge\cdots\wedge T_{j_k})$
\end{center}
which images are dense in $\supp(T_{j_1}\wedge\cdots\wedge T_{j_k})$. Let $\zeta_1,\ldots\zeta_k\in\partial\Mand$ be such that $z^2+\zeta_j$ has a cycle of multiplier $e^{2i\pi\theta_j}$. The conjugacy given by Theorem \ref{propmuniv} being hybrid, Lemma 3 of \cite{buffhenriksen} ensures that the map $f_{\Phi_n(\zeta,0)}$ has $k$ distinct neutral cycles of respectives multipliers $e^{2i\pi\theta_1},\ldots,e^{2i\pi\theta_k}$ and thus $\Per_{N_k}^k(\Theta_k)\neq\emptyset$ for some $N_k\in(\Z_+)^k$. Moreover, Corollary \ref{density} asserts that, for any $1\leq j\leq k$, the set of parameters $\zeta\in\partial\Mand$ for which $z^2+\zeta$ has a cycle of multiplier $e^{2i\pi\theta_j}$ is dense in $\partial\Mand$. Therefore, $\supp(T_{j_1}\wedge\cdots\wedge T_{j_k})\subset\overline{\mathcal{Z}_k(\Theta_k)}$, for any $j_1<\cdots<j_k$. By \eqref{Demarco2}, this implies $\supp(T_\bif^k)\subset\overline{\mathcal{Z}_k(\Theta_k)}$.

~

\par It thus remains to prove that $\Per_{N_k}^k(\Theta_k)\subset\supp(T_\bif^k)$, as soon as $\Per_{N_k}^k(\Theta_k)\neq\emptyset$. To this aim, we set for $m>n\geq1$ and $1\leq j\leq 2d-2$:
\begin{center}
$\textup{Prerep}_j(n,m)=\{f\in\rat_d^{cm} \ | \ f^{\circ n}(c_j(f))=f^{\circ m}(c_j(f))$ and $f^{\circ(m-n)}(c_j(f))$ is repelling$\}$.
\end{center}
We proceed by induction. Let $N_k=(n_1,\ldots,n_k)\in(\Z_+)^k$ be such that $\Per_{N_k}^k(\Theta_k)\neq\emptyset$ and let $f_0\in\Per_{N_k}(\Theta_k)$. By Lemma \ref{lemmaneutral}, $f_0$ has a non-persistent cycle of multiplier $e^{2i\pi\theta_k}$ in the family $\Per_{N_{k-1}}^{k-1}(\Theta_{k-1})$. Ma\~n\'e-Sad-Sullivan's Theorem asserts that $f_0$ is a bifurcation parameter in the family $\Per_{N_k}(\Theta_k)$. Therefore, by Montel's Theorem, there exists $f_1\in\Per_{N_{k-1}}^{k-1}(\Theta_{k-1})$ aribrarily close to $f_0$ so that $f_1$ has one critical point preperiodic to a repelling cycle, i.e.
\begin{center}
$f_1\in\Per_{N_{k-1}}^{k-1}(\Theta_{k-1})\cap\textup{Prerep}_{j_1}(n_1,m_1)$
\end{center}
for some $1\leq j_1\leq 2d-2$ and $m_1>n_1\geq1$ and $\Per_{N_{k-1}}^{k-1}(\Theta_{k-1})\cap\textup{Prerep}_j(n_1,m_1)$ has codimension $k$. Assume now that we already have found
\begin{center}
$f_j\in \displaystyle\bigcap_{1\leq i\leq j}\textup{Prerep}_{j_i}(n_i,m_i)\cap\Per_{N_{k-j}}^{k-j}(\Theta_{k-j})$
\end{center}
arbitrarily close to $f_0$ and that $\textup{codim}\bigcap_{1\leq i\leq j}\textup{Prerep}_{j_i}(n_i,m_i)\cap\Per_{N_{k-j}}^{k-j}(\Theta_{k-j})=k$. Then, the map $f_j$ has a non-persistent neutral cycle of multiplier $e^{2i\pi\theta_{k-j}}$ in the family
\begin{center}
$X_j\pe\displaystyle\bigcap_{1\leq i\leq j}\textup{Prerep}_{j_i}(n_i,m_i)\cap\Per_{N_{k-j-1}}^{k-j-1}(\Theta_{k-j-1})$.
\end{center}
Remark that the fact that a periodic point is repelling is an open condition. Thus, using again Montel's Theorem, we find integers $m_{j+1}>n_{j+1}\geq 1$ and
\begin{center}
$f_{j+1}\in\textup{Prerep}_{j_{j+1}}(n_{j+1},m_{j+1})\cap X_j$
\end{center}
arbitrarily close to $f_j$. Moreover, $\textup{codim}\ \textup{Prerep}_{j_{j+1}}(n_{j+1},m_{j+1})\cap X_j=k$.
\par Iterating this process $k$ times, we find $f_k$ arbitrarily close to $f_0$ at which $k$ critical points fall properly onto repelling cycles. Theorem 6.2 of \cite{Article1} states that, under these conditions, the map $f_k$ belongs to the support of $T_\bif^k$. As $f_k$ can be taken as close to $f_0$ as we want, this concludes the proof.\end{proof}

\begin{proof}[Proof of Theorem \ref{maintheorem}]
 Recall that we denoted by $\pi:\rat_d^{cm}\longrightarrow\rat_d$ the natural projection, which is a finite branched covering. The projection $$\Pi:\rat_d^{cm}\longrightarrow\mathcal{M}_d$$
 which, to $f$ associates its class of conjugacy by M\"obius transformations, is a principal bundle on $\rat_d^{cm}\setminus V$, where $V$ is a proper subvariety of $\rat_d^{cm}$ (see e.g. \cite{BB1} page 226). Since the function $L\circ\pi:\rat_d^{cm}\longrightarrow\R$ is continuous, the current $(dd^c(L\circ \pi))^k$ doesn't give mass to pluripolar sets. Therefore, Theorem \ref{MSSk} implies that the set $\mathcal{Z}_k(\Theta_k)\setminus V$ is dense in $\supp((dd^c(L\circ\pi))^k)$. The conclusion follows, since $\Pi(\supp((dd^cL\circ\pi)^k))=\supp(T_\bif^k)$, where $T_\bif^k$ denotes the $k^{th}$-bifurcation current of the moduli space $\mathcal{M}_d$.
\end{proof}

\subsection{In the moduli space $\mathcal{P}_d$ of degree $d$ polynomials.}

In the present section, we want to give a simpler argument for the proof of Theorem \ref{MSSk} in the case of polynomial families. This argument relies on a fine control of the cluster set of the bifurcation locus at infinity. To this aim, we will use the following paramtrization of the moduli space $\mathcal{P}_d$ of all degree $d$ polynomials. For any $(c,a)=(c_1,\ldots,c_{d-2},a)\in\C^{d-1}$, we set $c=(c_1,\ldots,c_{d-2})$ and
\begin{center}
$\displaystyle P_{(c,a)}(z)\pe \frac{1}{d}z^d+\sum_{j=2}^{d-1}(-1)^{d-j}\sigma_{d-j}(c)\frac{z^j}{j}+a^d$,
\end{center}
where $\sigma_j(c)$ is the symmetric degree $j$ polynomial in $c_1,\ldots,c_{d-2}$. The critical points of the polynomial $P_{(c,a)}$ are $0,c_1,\ldots,c_{d-2}$ and are holomorphic functions of the parameter. This family has been introduced by Branner and Hubbard in \cite{BH} to prove the compactness of the connectedness locus of $\mathcal{P}_d$. It also has been used by Dujardin and Favre in \cite{favredujardin} and by Bassanelli and Berteloot to study the bifurcation currents in \cite{BB2}.
\par The parameter space $\C^{d-1}$ can be naturally compactified as $\p^{d-1}$ by the following natural injection:
\begin{center}
$(c,a)\in\C^{d-1}\longrightarrow[c:a:1]\in\p^{d-1}$.
\end{center}
Finally, we denote by $T_i$ the bifurcation current of the marked critical point $c_i$. Let us set $\mathcal{C}_d=\{(c,a)\in\C^{d-1} \ | \ \J_{c,a}$ is connected$\}$. We summarize the main properties of this parametrization in the following proposition (see \cite{BH}, \cite[\S 6]{favredujardin} and \cite[\S 4]{BB2}):

~

\begin{proposition}
\begin{enumerate}
\item The natural projection $\Pi:\C^{d-1}\longrightarrow\mathcal{P}_d$ is a degree $d(d-1)$ analytic branched cover,
\item The loci
$$\mathcal{B}_i\pe\{(c,a) \, | \, (P^{\circ n}_{(c,a)}(c_i))_{n\geq1}\text{ is bounded in }\C\}$$ accumulate at infinity of $\C^{d-1}$ in $\p^{d-1}$ on codimension $1$ algebraic sets $\Gamma_i$ of the hyperplane $\p_{\infty}=\p^{d-1}\setminus\C^{d-1}$,
\item The locus $\mathcal{C}_d$ is compact in $\C^{d-1}$ and, for any $0\leq i_1<\cdots<i_p\leq d-2$, the intersection $\Gamma_{i_1}\cap\cdots\cap\Gamma_{i_p}$ has codimension $p$ in $\p_{\infty}$.
\item The bifurcation measure $\mu_\bif\pe T_\bif^{d-1}$ is a finite positive measure and its support coincides with the Shilov boundary of $\mathcal{C}_d$. 
\end{enumerate}
\label{propPolyd}
\end{proposition}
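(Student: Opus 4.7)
The plan is to treat the four assertions separately, each by invoking the relevant ingredient from \cite{BH}, \cite{favredujardin} or \cite{BB2}. For item $(1)$, I would simply count the symmetries of the parametrization. Since $P_{(c,a)}$ depends on $a$ only through $a^d$, the map $(c,a)\mapsto P_{(c,a)}$ factors through the quotient by $a\mapsto \zeta a$ with $\zeta^d=1$, giving fibers of cardinality $d$. On the other hand, an affine conjugacy respecting the Branner-Hubbard normal form (monic, centered, with $0$ as a marked critical point) must be of the form $z\mapsto \omega z$ with $\omega^{d-1}=1$, and sends $(c,a)$ to $(\omega^{-1}c,\omega^{-1}a)$. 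Combining the two free actions, the generic fiber of the natural projection $\Pi:\C^{d-1}\to\mathcal{P}_d$ has cardinality $d(d-1)$; the fact that $\Pi$ is a finite analytic branched cover then follows because $\mathcal{P}_d$ carries the structure of an affine variety.

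For items $(2)$ and $(3)$, the key object is the dynamical Green function of the critical point, $G_i(c,a):=\lim_n d^{-n}\log^+|P_{(c,a)}^{\circ n}(c_i)|$, for which $\mathcal{B}_i=\{G_i=0\}$. The weighted homogeneity relation $P_{(\lambda c,\lambda a)}(\lambda z)=\lambda^dP_{(c,a)}(z)+O(\lambda^{d-1})$ forces $G_i(\lambda c,\lambda a)=|\lambda|\,g_i(c,a)+o(|\lambda|)$ for some continuous, homogeneous of degree $1$ function $g_i$ on $\C^{d-1}$, as in \cite[\S 6]{favredujardin}. Consequently $\mathcal{B}_i$ accumulates in $\p^{d-1}$ exactly on the algebraic set $\Gamma_i\subset \p_\infty$ cut out by the leading-order vanishing of $g_i$. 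Compactness of $\mathcal{C}_d$ then boils down to showing that $\bigcap_i\Gamma_i=\emptyset$ in $\p_\infty$, which is a homogeneity computation: any point of this intersection would correspond to a polynomial at infinity whose \emph{all} critical points have bounded escape, and a direct computation (see \cite{BH}) shows this forces $\max(|c_i|,|a|)$ to stay bounded, a contradiction. The codimension statement for arbitrary partial intersections $\Gamma_{i_1}\cap\cdots\cap\Gamma_{i_p}$ requires showing that the $g_{i_j}$ impose independent constraints; this can be verified inductively by producing explicit one-parameter deformations at infinity that change only one of the escape rates at a time.

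For item $(4)$, the bifurcation measure $\mu_{\bif}=(dd^cL)^{d-1}$ is a top self-intersection on the $(d-1)$-dimensional manifold $\C^{d-1}$. The Lyapunov exponent satisfies the asymptotic $L(c,a)=\log^+\max(|c_i|,|a|)+O(1)$ at infinity, again coming from the homogeneity of the family. This bound, combined with the Chern--Levine--Nirenberg type estimates used in \cite[\S 4]{BB2}, yields $\int_{\C^{d-1}}\mu_{\bif}<\infty$. The identification of $\supp\mu_{\bif}$ with the Shilov boundary of $\mathcal{C}_d$ then follows from the standard potential-theoretic characterization: $L$ is a plurisubharmonic exhaustion of $\mathcal{C}_d$ whose $(d-1)$-th Monge-Amp\`ere mass is concentrated on the minimal closed subset on which every plurisubharmonic function on a neighborhood of $\mathcal{C}_d$ attains its supremum.

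The main obstacle I expect is the proper intersection statement in $(3)$: it is easy to identify each $\Gamma_i$ individually, but confirming that the algebraic sets $\{g_{i_1}=\cdots=g_{i_p}=0\}\cap\p_\infty$ have the expected codimension really needs the Branner--Hubbard analysis of the homogeneous escape-rate polynomials, since there is no a priori transversality and a clumsy computation would only give upper bounds on codimension. Once this is in place, finite mass for $\mu_{\bif}$ in $(4)$ follows formally by applying Bedford--Taylor to the controlled growth of $L$.
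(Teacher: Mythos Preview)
The paper does not actually prove this proposition: it is stated as a summary of known results, with the sentence ``We summarize the main properties of this parametrization in the following proposition (see \cite{BH}, \cite[\S 6]{favredujardin} and \cite[\S 4]{BB2})'' and no argument whatsoever. So there is nothing to compare your approach \emph{to}; your sketch is already more detailed than what the paper offers.

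That said, a couple of points in your outline deserve tightening if you intend it as a genuine proof sketch. In item~(2), you describe $\Gamma_i$ as ``cut out by the leading-order vanishing of $g_i$'', but $g_i$ is only a continuous plurisubharmonic function, not a polynomial; the algebraicity of $\Gamma_i$ is a nontrivial fact proved in \cite[Proposition~6.6]{favredujardin} by identifying the cluster set of $\mathcal{B}_i$ at infinity with an explicit hypersurface coming from the weighted-homogeneous structure of $P_{(c,a)}$. In item~(1), your symmetry count is essentially right in spirit, but the two group actions you describe (by $d$-th roots on $a$ and by $(d{-}1)$-th roots via conjugation) are not quite independent as stated and one has to check their combined action is free on a Zariski-open set; the cleanest route is the one in \cite[\S 5]{favredujardin} or \cite{BH}. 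Your treatment of (3) and (4) matches the arguments in \cite{BH} and \cite[\S 4]{BB2} respectively, and your identification of the proper-intersection statement in (3) as the genuinely delicate step is accurate.
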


\par For any $w\in\C$, the algebraic hypersurfaces $\Per_n(w)$ of $\C^{d-1}$ extend as algebraic hypersurfaces of $\p^{d-1}$. Moreover, for $w\in\overline{\D}$, the hypersurface $\Per_n(w)$ intersect the hyperplane at infinity $\p_\infty$ along the algebraic set $\bigcup_{0\leq j\leq d-2}\p_\infty\cap\mathcal{B}_j$, which has codimension $2$ in $\p^{d-1}$.

We use the same notations as in section \ref{sectionratd}. Let $1\leq k\leq d-1$, for $N_k=(n_1,\ldots,n_k)\in(\mathbb{Z}_+)^k$ and $\Theta_k=(\theta_1;\ldots,\theta_k)\in(\R\setminus\Z)^k$, we denote by $\Per_{N_k}^k(\Theta_k)$ the set of parameters $(c,a)\in\C^{d-1}$ s.t. $P_{(c,a)}$ has $k$ distinct neutral cycles of respective multipliers $e^{2i\pi\theta_j}$ and period $n_j$. The set $\Per^k_{N_k}(\Theta_k)$ is a subvariety of $\bigcap_{1\leq j\leq k}\Per_{n_j}(e^{2i\pi\theta_j})$. We also set
\begin{center}
$\mathcal{Z}_k(\Theta_k)\pe\displaystyle\bigcup_{N_k\in(\Z_+)^k}\Per_{N_k}^k(\Theta_k)$
\end{center}
and $\textup{Prerep}(k)\pe\{(c,a)\in\C^{d-1} \ | \ P_{(c,a)}$ has $k$ prereppelling critical points$\}$. In the present setting, Theorem \ref{MSSk} can be formulated as follows.

\begin{theorem}
Let $1\leq k\leq d-1$ and let $\Theta_k=(\theta_1,\ldots,\theta_k)\in (\R\setminus\Z)^k$. Then, in $\C^{d-1}$,
\begin{center}
$\supp(T_\bif^k)=\displaystyle\overline{\mathcal{Z}_k(\Theta_k)}=\overline{\textup{Prerep}(k)}$,
\end{center}
\label{MSSkpoly}
\end{theorem}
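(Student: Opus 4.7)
The plan is to follow the same three-step scheme as for Theorem \ref{MSSk}, but to replace Theorem \ref{propmuniv} by the projective compactification $\C^{d-1}\hookrightarrow\p^{d-1}$ of Proposition \ref{propPolyd}. The identity $\supp(T_\bif^k)=\overline{\textup{Prerep}(k)}$ is \cite[Theorem 0.1]{higher}, so the real work lies in proving the equality $\supp(T_\bif^k)=\overline{\mathcal{Z}_k(\Theta_k)}$.

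For the inclusion $\supp(T_\bif^k)\subset\overline{\mathcal{Z}_k(\Theta_k)}$, I would appeal to the Bassanelli--Berteloot equidistribution theorem of \cite{BB2}, which in the family $(P_{(c,a)})_{(c,a)\in\C^{d-1}}$ yields $d^{-n}[\Per_n(e^{2i\pi\theta_j})]\to T_\bif$ for each fixed $\theta_j\in\R\setminus\Z$. By Proposition \ref{propPolyd}(2)--(3), the projective extensions of these hypersurfaces are algebraic in $\p^{d-1}$ with trace on $\p_\infty$ contained in the fixed algebraic set $\bigcup_i\Gamma_i$, and any $k$ of these traces meet properly in codimension $k$; Bezout's theorem in $\p^{d-1}$ then provides a uniform bound on the total projective mass of the wedge
\begin{equation*}
\frac{1}{d^{n_1+\cdots+n_k}}\,[\Per_{n_1}(e^{2i\pi\theta_1})]\wedge\cdots\wedge[\Per_{n_k}(e^{2i\pi\theta_k})].
\end{equation*}
Since $T_\bif=dd^c L$ has continuous potential, the Bedford--Taylor continuity of the Monge--Amp\`ere operator for continuous psh functions, combined with a slicing argument on $\p^{d-1}$ to discard the mass escaping to $\p_\infty$, implies convergence of the wedge above to $T_\bif^k$ on $\C^{d-1}$ along suitable subsequences $n_j\to\infty$. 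Taking supports gives $\supp(T_\bif^k)\subset\overline{\bigcup_{N_k}\bigcap_j\Per_{n_j}(e^{2i\pi\theta_j})}$, and the observation that the locus where two of the $k$ cycles coincide is a proper algebraic subvariety upgrades this to $\supp(T_\bif^k)\subset\overline{\mathcal{Z}_k(\Theta_k)}$.

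For the reverse inclusion $\overline{\mathcal{Z}_k(\Theta_k)}\subset\supp(T_\bif^k)$, I would first establish the polynomial analog of Lemma \ref{lemmaneutral}: every irreducible component of $\Per^k_{N_k}(\Theta_k)\subset\C^{d-1}$ has codimension $k$. This goes through verbatim, as the Fatou--Shishikura inequality in $\mathcal{P}_d$ bounds the number of non-repelling cycles by $d-1$ and Lemma \ref{lmalgebraic} rules out non-trivial stable algebraic families with $k$ persistent irrational neutral cycles. Then, starting from $f_0\in\mathcal{Z}_k(\Theta_k)$, the $k$-th neutral cycle is non-persistent in the codim-$(k-1)$ family $\Per^{k-1}_{N_{k-1}}(\Theta_{k-1})$, so Ma\~n\'e--Sad--Sullivan combined with Montel yields a nearby parameter with one critical point preperiodic to a repelling cycle; iterating $k$ times produces $f_k\to f_0$ with $k$ critical points falling properly onto repelling cycles, which by \cite[Theorem 6.2]{Article1} lies in $\supp(T_\bif^k)$.

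The principal obstacle is the iterated-equidistribution step: in the general rational-map setting, convergence of $d^{-(n_1+\cdots+n_k)}\bigwedge_j[\Per_{n_j}(re^{i\theta_j})]$ for $r\geq 1$ is precisely the open problem recalled in the introduction. The polynomial-specific simplification, and the reason for the ``fine control of the cluster set at infinity'' announced at the beginning of this section, is that the trace of $\Per_n(e^{2i\pi\theta})$ on $\p_\infty$ is the \emph{fixed} algebraic variety $\bigcup_i\Gamma_i$, independent of $n$ and of $\theta$; it is this stability of the boundary behavior that converts the BB2 single-variable equidistribution into an iterated statement via Bezout plus slicing, without having to resort to Theorem \ref{propmuniv}.
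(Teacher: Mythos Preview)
Your reverse inclusion $\mathcal{Z}_k(\Theta_k)\subset\supp(T_\bif^k)$ is fine and matches the paper's scheme exactly. But you have inverted the role of the projective compactification, and this produces a genuine gap in the forward inclusion.

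\medskip

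\textbf{Where the compactification is actually used.} In the paper, the phrase ``fine control of the cluster set at infinity'' refers \emph{only} to the proof of the codimension lemma (Lemma~\ref{lemmaneutralpoly}), not to the inclusion $\supp(T_\bif^k)\subset\overline{\mathcal{Z}_k(\Theta_k)}$. The paper observes that each irreducible component $H_i$ of $\Per_{n_i}(e^{2i\pi\theta_i})$ satisfies $H_i\subset\bigcup_j\mathcal{B}_j$, hence meets $\p_\infty$ inside the fixed codimension-$2$ set $\bigcup_j\Gamma_j$; Proposition~\ref{propPolyd}(3) then forces $\textup{codim}\,\bigcap_i H_i=k$ directly, bypassing McMullen's Lemma~\ref{lmalgebraic}. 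That is the ``simpler argument''. Your proof of the codimension lemma via Fatou--Shishikura and Lemma~\ref{lmalgebraic} is correct, but it is precisely the argument from \S\ref{sectionratd} that the polynomial section was meant to replace.

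\medskip

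\textbf{The gap in your forward inclusion.} For $\supp(T_\bif^k)\subset\overline{\mathcal{Z}_k(\Theta_k)}$, the paper does \emph{not} avoid Theorem~\ref{propmuniv}: it runs the same argument as in Theorem~\ref{MSSk}, using the embeddings $\Phi_n:(\partial\Mand)^k\times\D^{d-1-k}\hookrightarrow\supp(T_{j_1}\wedge\cdots\wedge T_{j_k})$ together with Corollary~\ref{density}. Your alternative route via iterated equidistribution of $d^{-(n_1+\cdots+n_k)}\bigwedge_j[\Per_{n_j}(e^{2i\pi\theta_j})]$ towards $T_\bif^k$ is not established, and the sketch you give does not close the gap. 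B\'ezout in $\p^{d-1}$ does bound the total mass of the wedge, but mass bounds alone do not yield convergence to $T_\bif^k$. The Bedford--Taylor continuity you invoke applies to wedges of $dd^c u_j$ with $u_j$ \emph{continuous} and converging locally uniformly; the local potentials $d^{-n}\log|p_n(\lambda,e^{2i\pi\theta})|$ of $d^{-n}[\Per_n(e^{2i\pi\theta})]$ are not continuous (they equal $-\infty$ on the hypersurface), and the convergence supplied by \cite{BB2} is only in $L^1_{loc}$. Passing from single-factor convergence to convergence of the wedge in this setting is exactly the open problem the introduction flags; the only case settled is $r<1$, $k=d-1$, and that required Yuan's arithmetic equidistribution \cite{distriPCF}, not a slicing/B\'ezout argument. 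The stability of the trace at infinity that you highlight controls where mass can escape, but it does not prevent the wedge from concentrating on a strict subvariety of $\supp(T_\bif^k)$ in the limit.

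\medskip

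In short: keep your reverse-inclusion argument, replace your forward-inclusion argument by the one from Theorem~\ref{MSSk} (it goes through unchanged for polynomials), and if you want the ``simpler'' polynomial-specific input, use the compactification to prove Lemma~\ref{lemmaneutralpoly} instead of appealing to Lemma~\ref{lmalgebraic}.
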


\par The proof is the same as in the case of the space $\rat_d^{cm}$. The only difference is in the proof of the following lemma.

\begin{lemma}
Let $k\geq2$, $\Theta_k=(\theta_1,\ldots,\theta_k)\in(\R\setminus\Z)^k$ and $N_k=(n_1,\ldots,n_k)\in(\Z_+)^k$. If $\Per_{N_k}^k(\Theta_k)\neq\emptyset$, then any irreducible component of the algebraic set $\Per_{N_k}^k(\Theta_k)$ has codimension $k$ in $\C^{d-1}$.
\label{lemmaneutralpoly}
\end{lemma}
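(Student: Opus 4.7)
The approach is to use the projective compactification $\C^{d-1}\hookrightarrow\p^{d-1}$ of Proposition \ref{propPolyd} and perform a dimension count at infinity, replacing the stability/Lemma \ref{lmalgebraic} argument used in Lemma \ref{lemmaneutral} by a more elementary projective-geometric proof. The two ingredients I would combine are (i) a Fatou--Shishikura inclusion of $\Per^k_{N_k}(\Theta_k)$ into a union of intersections of the bounded-orbit loci $\mathcal{B}_i$, and (ii) the control of the cluster sets $\Gamma_i=\overline{\mathcal{B}_i}\cap\p_\infty$ provided by items (2) and (3) of Proposition \ref{propPolyd}.

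First I would establish the inclusion
$$\Per^k_{N_k}(\Theta_k)\;\subset\;\bigcup_{|S|=k,\,S\subset\{0,\ldots,d-2\}}\;\bigcap_{i\in S}\mathcal{B}_i,$$
which follows from the Fatou--Shishikura inequality for polynomials (no Herman rings): each non-repelling cycle has an associated critical orbit that cannot escape to infinity, so $k$ distinct non-repelling cycles of multipliers $e^{2i\pi\theta_j}$ force at least $k$ of the critical points $\{0,c_1,\ldots,c_{d-2}\}$ to have non-escaping forward orbit. Now let $V$ be an irreducible component of $\Per^k_{N_k}(\Theta_k)$ and $\overline V\subset\p^{d-1}$ its projective closure. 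Taking closures in the inclusion above and applying item (2) of Proposition \ref{propPolyd} yields
$$\overline V\cap\p_\infty \;\subset\;\bigcup_{|S|=k}\bigcap_{i\in S}\Gamma_i,$$
and by item (3) each term $\bigcap_{i\in S}\Gamma_i$ with $|S|=k$ has codimension $k$ in $\p_\infty$, so the right-hand side has dimension at most $d-2-k$.

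Finally, if $\dim V=0$ then $\text{codim}_{\C^{d-1}}V=d-1\geq k$ trivially. Otherwise $\overline V$ is a positive-dimensional irreducible projective variety not contained in $\p_\infty$ (because $V\subset\C^{d-1}$ is nonempty), so the hyperplane section principle gives $\dim(\overline V\cap\p_\infty)=\dim\overline V-1$. Combined with the previous step this yields $\dim V-1\leq d-2-k$, hence $\text{codim}_{\C^{d-1}}V\geq k$; the reverse inequality is automatic since $V$ sits in an intersection of $k$ hypersurfaces. The main obstacle is the Fatou--Shishikura bookkeeping of the first step, which requires matching each of the $k$ non-repelling cycles to a \emph{distinct} non-escaping critical orbit; everything else is a clean projective dimension count, made possible by the remarkable fact recorded as item (2) of Proposition \ref{propPolyd} that all hypersurfaces $\Per_n(w)$ with $|w|\leq 1$ share the same low-dimensional trace at infinity.
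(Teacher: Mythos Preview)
Your argument is correct and follows essentially the same route as the paper's own proof: compactify in $\p^{d-1}$, control the trace of the component at infinity via Proposition~\ref{propPolyd}, and conclude by a hyperplane-section dimension count together with the trivial upper bound $\textup{codim}\,V\leq k$. The paper phrases the argument through the hypersurface components $H_i$ of $\Per_{n_i}(e^{2i\pi\theta_i})$ and asserts directly that $\p_\infty\cap\bigcap_i\overline{H_i}$ has codimension $k+1$ from $H_i\subset\bigcup_j\mathcal{B}_j$, whereas you make the underlying Fatou--Shishikura bookkeeping explicit by first placing the irreducible component $V$ inside $\bigcup_{|S|=k}\bigcap_{i\in S}\mathcal{B}_i$ and then invoking item~(3) of Proposition~\ref{propPolyd}; your version is in fact the clean justification of that codimension claim.
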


\begin{proof}
Let $\Gamma$ be a non-empty irreducible component of $\Per_{N_k}^k(\Theta_k)$. Then, there exists irreducible components $H_i$ of $\Per_{n_i}(e^{2i\pi\theta_i})$, such that $\Gamma$ is a Zariski open set of $H_1\cap\cdots\cap H_k$. For any $1\leq i\leq k$, note that $H_i\subset\bigcup_j\mathcal{B}_j$. By Proposition \ref{propPolyd}, this implies that $\p_\infty\cap\bigcap_{1\leq i\leq k}H_i$ has codimension $k+1$. Since $\p_\infty$ has codimension $1$, we get $\textup{codim}\ H_1\cap\cdots\cap H_k=k$.
\end{proof}

\begin{remark}
Dujardin and Favre proved that for a $\mu_\bif-$generic polynomial $f$, all critical orbits are dense in $\J_f$ (see \cite[Corollary 11]{favredujardin}). Therefore, the copies of $(\partial\Mand)^{d-1}$ provided by Theorem \ref{mainthmMcM} have zero measure for $\mu_\bif$, even though they form a homogeneous dense subset of $\supp(\mu_\bif)$ of Hausdorff dimension $2(d-1)$.
\end{remark}

\bibliographystyle{short}
\bibliography{biblio}

\begin{thebibliography}{MSS}

\bibitem[B]{bsurvey}
Fran{\c{c}}ois Berteloot.
\newblock Bifurcation currents in holomorphic families of rational maps.
\newblock In {\em Pluripotential Theory}, volume 2075 of {\em Lecture Notes in
  Math.}, pages 1--93. Springer-Verlag, Berlin, 2013.

\bibitem[BB1]{BB1}
Giovanni Bassanelli and Fran{\c{c}}ois Berteloot.
\newblock Bifurcation currents in holomorphic dynamics on {$\mathbb{P}^k$}.
\newblock {\em J. Reine Angew. Math.}, 608:201--235, 2007.

\bibitem[BB2]{BB3}
Giovanni Bassanelli and Fran{\c{c}}ois Berteloot.
\newblock Lyapunov exponents, bifurcation currents and laminations in
  bifurcation loci.
\newblock {\em Math. Ann.}, 345(1):1--23, 2009.

\bibitem[BB3]{BB2}
Giovanni Bassanelli and Fran{\c{c}}ois Berteloot.
\newblock Distribution of polynomials with cycles of a given multiplier.
\newblock {\em Nagoya Math. J.}, 201:23--43, 2011.

\bibitem[BE]{buffepstein}
Xavier Buff and Adam~L. Epstein.
\newblock Bifurcation measure and postcritically finite rational maps.
\newblock In {\em Complex dynamics : families and friends / edited by Dierk
  Schleicher}, pages 491--512. A K Peters, Ltd., Wellesley, Massachussets,
  2009.

\bibitem[BG]{Article2}
Xavier Buff and Thomas Gauthier.
\newblock Pertubations of flexible latt\`es maps.
\newblock to appear in \emph{Bull. Soc. Math. France}.

\bibitem[BH1]{BH}
Bodil Branner and John~H. Hubbard.
\newblock The iteration of cubic polynomials. {I}. {T}he global topology of
  parameter space.
\newblock {\em Acta Math.}, 160(3-4):143--206, 1988.

\bibitem[BH2]{buffhenriksen}
X.~Buff and C.~Henriksen.
\newblock Julia sets in parameter spaces.
\newblock {\em Comm. Math. Phys.}, 220(2):333--375, 2001.

\bibitem[BT]{bedfordtaylor}
Eric Bedford and B.~A. Taylor.
\newblock The {D}irichlet problem for a complex {M}onge-{A}mp\`ere equation.
\newblock {\em Bull. Amer. Math. Soc.}, 82(1):102--104, 1976.

\bibitem[De1]{DeMarco1}
Laura DeMarco.
\newblock Dynamics of rational maps: a current on the bifurcation locus.
\newblock {\em Math. Res. Lett.}, 8(1-2):57--66, 2001.

\bibitem[De2]{DeMarco2}
Laura DeMarco.
\newblock Dynamics of rational maps: {L}yapunov exponents, bifurcations, and
  capacity.
\newblock {\em Math. Ann.}, 326(1):43--73, 2003.

\bibitem[Du1]{dsurvey}
Romain Dujardin.
\newblock Bifurcation currents and equidistribution on parameter space, 2011.
\newblock preprint arXiv: 1111.3989v2, to appear in the proceedings of the
  conference \emph{Frontiers in complex dynamics (Celebrating John Milnor's
  80th birthday)}.

\bibitem[Du2]{higher}
Romain Dujardin.
\newblock The supports of higher bifurcation currents, 2012.
\newblock preprint arXiv:1202.3249, to appear in \emph{Annales de la Facult\'e
  de Toulouse}.

\bibitem[DF]{favredujardin}
Romain Dujardin and Charles Favre.
\newblock Distribution of rational maps with a preperiodic critical point.
\newblock {\em Amer. J. Math.}, 130(4):979--1032, 2008.

\bibitem[DH]{DH}
Adrien Douady and John~Hamal Hubbard.
\newblock On the dynamics of polynomial-like mappings.
\newblock {\em Ann. Sci. \'Ecole Norm. Sup. (4)}, 18(2):287--343, 1985.

\bibitem[F]{falconer}
Kenneth Falconer.
\newblock {\em Fractal geometry}.
\newblock John Wiley \& Sons Ltd., Chichester, 1990.
\newblock Mathematical foundations and applications.

\bibitem[FG]{distriPCF}
Charles Favre and Thomas Gauthier.
\newblock Distribution of postcritically finite polynomials, 2013.
\newblock preprint arXiv: 1302.0810.

\bibitem[G]{Article1}
Thomas Gauthier.
\newblock Strong bifurcation loci of full {H}ausdorff dimension.
\newblock {\em Ann. Sci. \'Ec. Norm. Sup. (4)}, 45(6):947--984, 2012.

\bibitem[I]{inou}
H.~{Inou}.
\newblock {Combinatorics and topology of straightening maps II: Discontinuity},
  2009.
\newblock preprint arXiv:0903.4289.

\bibitem[IK]{kiwiinou}
Hiroyuki Inou and Jan Kiwi.
\newblock Combinatorics and topology of straightening maps, {I}: {C}ompactness
  and bijectivity.
\newblock {\em Adv. Math.}, 231(5):2666--2733, 2012.

\bibitem[L]{Lyubich}
M.~Yu. Lyubich.
\newblock Some typical properties of the dynamics of rational mappings.
\newblock {\em Uspekhi Mat. Nauk}, 38(5(233)):197--198, 1983.

\bibitem[M1]{McMullen4}
Curtis~T. McMullen.
\newblock Families of rational maps and iterative root-finding algorithms.
\newblock {\em Ann. of Math. (2)}, 125(3):467--493, 1987.

\bibitem[M2]{McMullen3}
Curtis~T. McMullen.
\newblock The {M}andelbrot set is universal.
\newblock In {\em The {M}andelbrot set, theme and variations}, volume 274 of
  {\em London Math. Soc. Lecture Note Ser.}, pages 1--17. Cambridge Univ.
  Press, Cambridge, 2000.

\bibitem[MSS]{MSS}
R.~Ma{\~n}{\'e}, P.~Sad, and D.~Sullivan.
\newblock On the dynamics of rational maps.
\newblock {\em Ann. Sci. \'Ecole Norm. Sup. (4)}, 16(2):193--217, 1983.

\bibitem[Sh]{shishikura2}
Mitsuhiro Shishikura.
\newblock The {H}ausdorff dimension of the boundary of the {M}andelbrot set and
  {J}ulia sets.
\newblock {\em Ann. of Math. (2)}, 147(2):225--267, 1998.

\bibitem[Si]{Silverman}
Joseph~H. Silverman.
\newblock {\em The arithmetic of dynamical systems}, volume 241 of {\em
  Graduate Texts in Mathematics}.
\newblock Springer, New York, 2007.

\bibitem[Y]{yuan}
Xinyi Yuan.
\newblock Big line bundles over arithmetic varieties.
\newblock {\em Invent. Math.}, 173(3):603--649, 2008.

\end{thebibliography}

\begin{center}
\rule{3cm}{0.5pt}
\end{center}

\textsc{\footnotesize LAMFA UMR-CNRS 7352, Universit\' e de Picardie Jules Verne, 33 rue Saint-Leu, 80039 Amiens Cedex 1, France.}\\
{\footnotesize {\em Email address:} \texttt{thomas.gauthier@u-picardie.fr}}

\end{document}